\documentclass[12pt,a4paper]{article}

\usepackage{amsfonts}
\usepackage{amssymb,amsmath,amsthm}
\usepackage[utf8]{inputenc}
\usepackage[T1]{fontenc}
\usepackage{enumerate}
\usepackage{hyperref}
\usepackage{tikz}
\usepackage{tikz-3dplot}

\usetikzlibrary{calc}

\hypersetup{colorlinks=true,linkcolor=red, anchorcolor=green,
citecolor=cyan, urlcolor=red, filecolor=magenta, pdftoolbar=true}

\newtheorem{theorem}{Theorem}[section]
\newtheorem{lemma}[theorem]{Lemma}

\newtheorem{corollary}[theorem]{Corollary}
\newtheorem{remark}[theorem]{Remark}
\newtheorem{quest}[theorem]{Open Question}

\newcommand{\cemph}[1]{\emph{\color{red}#1}}

\newcommand\R{\mathbb{R}}
\newcommand\B{\mathbb{B}}
\newcommand\BM{\mathcal{BM}}
\newcommand\dbm{d_{BM}}

\DeclareMathOperator{\conv}{conv}
\DeclareMathOperator{\lin}{lin}
\DeclareMathOperator{\bd}{bd}

\newcommand\extrafootertext[1]{%
    \bgroup
    \renewcommand\thefootnote{\fnsymbol{footnote}}%
    \renewcommand\thempfootnote{\fnsymbol{mpfootnote}}%
    \footnotetext[0]{#1}%
    \egroup
}
\providecommand{\keywords}[1]{%
\extrafootertext{\textit{Key words and phrases.} #1.}
}
\providecommand{\subjclass}[1]{%
\extrafootertext{2020 \textit{Mathematics Subject Classification.} #1.}
}

\begin{document}

\title{Exact Banach--Mazur distances of certain $\ell_p$-sums and cones}
\author{Florian Grundbacher \MakeLowercase{and} Tomasz Kobos}
\subjclass{Primary 52A21; Secondary 46B04, 46B20, 52A20} 
\keywords{Banach--Mazur distance, p-sum, convex cone, isometric embedding}

\maketitle

\begin{abstract}
We determine certain Banach--Mazur distances involving $\ell_p$-direct sums of finite-dimensional real normed spaces and related cone constructions of convex bodies. Using a recent characterization of the optimal Banach--Mazur position with respect to the Euclidean ball, we derive a closed formula for the distance from $X_1 \oplus_p \cdots \oplus_p X_k$ to Euclidean space in terms of the distances of the spaces $X_i$ to Euclidean space. For $p = 1$ we show that if $d_{BM}(X,\ell_1^n) \leq 3$, then $d_{BM}(X \oplus_1 \ell_1^m, \ell_1^{n+m}) = d_{BM}(X,\ell_1^n)$. Interpreting $\ell_1$-sums geometrically as double cones motivates a study of single cones over arbitrary convex bases, for which we establish an analogous result with the simplex replacing $\ell_1$. We further show that in dimension $3$ the distance between single cones with symmetric bases equals the distance between the bases, and that the same equality holds for double cones over planar symmetric bases in arbitrary dimension, under an additional assumption on the distance of the bases to $\ell_1^2$. As consequences, we obtain an explicit isometric embedding of the $2$-dimensional symmetric Banach--Mazur compactum into the $3$-dimensional (non-symmetric) compactum and lift a recent construction of arbitrarily large equilateral sets in the $2$-dimensional symmetric compactum to all higher dimensions.
\end{abstract}

\maketitle

\section{Introduction and Results}

The purpose of this paper is to determine exact Banach--Mazur distances between several $\ell_p$-sum constructions for finite-dimensional real normed spaces,
as well as related cone constructions for general convex bodies in $\R^n$.

Before we state the main results, 
we introduce the required notation and recall some basic facts.
For any integer $n \geq 1$ and scalar $p \in [1,\infty]$,
we denote by $\ell_p^n$ the space $\R^n$ equipped with the norm $\|\cdot\|_p$ defined for $x = (x_1, \ldots, x_n) \in \R^n$ by
\[
    \|x\|_p
    = ( |x_1|^p + \ldots + |x_n|^p )^{\frac{1}{p}},
\]
where for $p=\infty$ we understand $\|x\|_\infty = \max \{ |x_1|, \ldots, |x_n| \}$.
In particular, $\|\cdot\|_1$, $\|\cdot\|_2$, and $\| \cdot \|_{\infty}$ denote the $\ell_1$, Euclidean, and maximum norms, respectively.
The unit ball of $\ell_p^n$ is given by $\B_p^n$.
More generally, given $p \in [1, \infty]$ and finite-dimensional normed spaces $X_i=(\R^{n_i}, \| \cdot \|_{X_i})$ for $i=1, \ldots, k$, the \cemph{$\ell_p$-direct sum} $X_1 \oplus_p \ldots \oplus_p X_k$ is the space $\R^{n_1 + \ldots + n_k}$ equipped with the norm
\[
    \|x\|
    = \left( \|x^1\|^p_{X_1} + \ldots + \|x^k\|^p_{X_k} \right)^{\frac{1}{p}},
\]
where $x=(x^1, \ldots, x^k)$ with $x^i \in \R^{n_i}$ for each $i=1, \ldots, k$.
For $p = \infty$, we again understand $\|x\| = \max \{ \|x^1\|_{X_1}, \ldots, \|x^k\|_{X_k} \}$.

For a set $A \subseteq \R^n$, its \cemph{$v$-translation} by a vector $v \in \R^n$ is given by $A + v = \{ a + v : a \in A \}$.
Its \cemph{$\rho$-dilation} for $\rho \in \R$ is $\rho A = \{ \rho a : a \in A \}$.
We say that $A$ is \cemph{$0$-symmetric} if $(-1)A = A$, and \cemph{(centrally) symmetric} if $A+v$ is $0$-symmetric for some $v \in \R^n$.

The \cemph{Banach--Mazur distance} between two real normed spaces $X, Y$ of the same dimension is defined as
\[
    \dbm(X, Y)
    = \inf \|T\| \, \|T^{-1}\|,
\]
where the infimum (in fact a minimum in the finite-dimensional setting)
is taken over all invertible linear operators $T: X \to Y$
and the standard operator norms are used.
For \cemph{convex bodies} $K, L \subseteq \R^n$, i.e., compact convex sets with non-empty interior,
their \cemph{Banach--Mazur distance} is defined as
\[
    \dbm(K,L)
    = \inf \{\rho \geq 0 : K+u \subseteq T(L+v) \subseteq \rho (K+u) \},
\] 
with the infimum (again a minimum) taken over all invertible linear operators $T : \R^n \to \R^n$ and vectors $u, v \in \R^n$.
More generally, if $K$ and $L$ are \cemph{$d$-dimensional} convex bodies, meaning their affine spans have dimension $d$,
then $\dbm(K,L)$ is understood like above after embedding $K$ and $L$ in $\R^d$.
When $K$ and $L$ are $0$-symmetric,
the translation vectors $u$ and $v$ may be omitted.
Under the well-known correspondence of norms and $0$-symmetric convex bodies,
the equality $\dbm(X,Y) = \dbm(B_x,B_y)$ holds for any two normed spaces $X = (\R^n,\|\cdot\|_X)$, $Y = (\R^n,\|\cdot\|_Y)$ and their respective unit balls $B_X, B_Y \subseteq \R^n$.
The $n$-dimensional \cemph{Banach--Mazur compactum} $\BM^n$ is the set of affine equivalence classes of $n$-dimensional convex bodies equipped with the (multiplicative) metric $\dbm$.
By $\BM_s^n$ we denote the subset of $\BM^n$ corresponding to only $0$-symmetric convex bodies,
i.e., the $n$-dimensional symmetric Banach--Mazur compactum.

The $\ell_p$-direct sum of normed spaces is one of the fundamental constructions in the geometry of Banach spaces,
and many of their aspects have been studied in various different contexts.
In particular, estimates of the Banach--Mazur distance when $\ell_p$-sums are involved often play an important role, see, e.g., \cite{bourgainszarek, bourgain, johnson, johnson2, khrabrov, khrabrov2, plebanek, piasecki, rudelson, tomczaksymm}.

In the vast majority of these works, the Banach--Mazur distance to $\ell_p$-sums is only estimated rather than determined exactly.
This reflects a broader fact about Banach--Mazur compacta:
most of their properties have been established only in the asymptotic regime, i.e., as the dimension tends to infinity.
In part, this is due to exact computations of distances requiring solutions to optimization problems that can be challenging even in very specific situations.
One notable example is given by the classical spaces
$\ell_1^n$ and $\ell_{\infty}^n$.
Their distance is known to be of order $\sqrt{n}$,
but besides the trivial cases $n = 1,2$, the exact value has been determined only for $n=3$ as $\frac{9}{5}$ and $n=4$ as $2$ (see \cite{kobosvarivoda}).
The distance $\frac{9}{5}$ appears to be the largest Banach--Mazur distance computed for $3$-dimensional real normed spaces to date.

The main goal of this paper is to investigate some situations in which the Banach--Mazur distance to an $\ell_p$-sum can be determined exactly,
which leads us to explicit computations of the Banach--Mazur distance between many different pairs of normed spaces or convex bodies.
In concrete cases, obtaining an upper bound on the Banach--Mazur distance is usually straightforward,
whereas establishing the matching lower bound is the main challenge, requiring appropriate tools and often restrictions.
One of the few instances in which the Banach--Mazur distance to an $\ell_p$-sum has been computed precisely is \cite[Proposition~$2$]{tomczaksymm} by Tomczak-Jaegermann.
For any integer $n \geq 1$ and any $1 < p \leq 2$, it is proved that
\begin{equation}
\label{eq:ntj}
    \dbm(\ell_p^n \oplus_{p^*} \ldots \oplus_{p^*} \ell_p^n, \ell_2^{n^2})
    = n^{\frac{2-p}{p}},
\end{equation}
where $p^*=\frac{p}{p-1} \in [2, \infty)$ is the conjugate exponent of $p$ and there are exactly $n$ identical summands.
The proof method heavily relies on the large set of symmetries present in this particular sum and thus has limited applicability.

We begin with a broad generalization of \eqref{eq:ntj}, computing the Banach--Mazur distance between the Euclidean space and an arbitrary $\ell_p$-sum of finite-dimensional real normed spaces in terms of the distances of the individual summands.
The result reads as follows.

\begin{theorem}
\label{thm:l2_sum}
Let $n_1, n_2, \ldots, n_k \geq 1$ be integers and let $X_i$ be an $n_i$-dimensional real normed space for $i=1, \ldots, k$.
Then, for any $p \in [1,\infty]$,
\[
    \dbm(X_1 \oplus_p \ldots \oplus_p X_k, \ell^{n_1 + \ldots + n_k}_2)
    = \| (\dbm(X_1,\ell_2^{n_1}), \ldots, \dbm(X_k,\ell_2^{n_k}) \|_r,
\]
where $r = \frac{2p}{|p-2|} \in [2,\infty]$ (with the conventions $r = \infty$ when $p=2$ and $r=2$ when $p=\infty$).
\end{theorem}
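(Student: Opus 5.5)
Write $X = X_1 \oplus_p \cdots \oplus_p X_k$, $d_i = \dbm(X_i,\ell_2^{n_i})$ and $D = \|(d_1,\ldots,d_k)\|_r$. The plan is to prove the upper bound $\dbm(X,\ell_2^n) \le D$ by an explicit block-diagonal construction, and the matching lower bound via the characterization of optimal Banach--Mazur positions relative to the Euclidean ball that the abstract refers to.

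\emph{Upper bound.} Assume first $p \ge 2$; the case $p \le 2$ is analogous, with the roles of the two inequalities swapped, or follows by duality. For each $i$ fix a Euclidean norm $|\cdot|_i$ on $\R^{n_i}$ such that
\[
|x^i|_i \le \|x^i\|_{X_i} \le d_i |x^i|_i .
\]
For weights $\lambda_i>0$ put $|x|^2 = \sum_i \lambda_i^2 |x^i|_i^2$, a Euclidean norm on the sum. Comparing $\|x\| = \bigl(\sum \|x^i\|^p\bigr)^{1/p}$ with $|x|$ is a two-sided estimate between an $\ell_p$- and a weighted $\ell_2$-norm of the vector $(\|x^i\|_{X_i})_i$, with the additional distortion $d_i$ in each coordinate. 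By H\"older's inequality with $\frac1r = \left|\frac12 - \frac1p\right|$,
\[
\|(a_i)\|_p \le \|(a_i)\|_2, \qquad \|(a_i)\|_2 \le \|(1)\|_r \, \|(a_i)\|_p \quad \text{(in weighted form)}.
\]
Choosing the $\lambda_i$ as suitable powers of the $d_i$, namely $\lambda_i \propto d_i^{r/2}$ up to normalization, should make the product of the two operator norms equal to $\|(d_i)\|_r$. This is a routine optimization and the resulting formula can be checked directly, including the endpoint cases $p = 2$ and $p = \infty$.

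\emph{Lower bound.} This is the main obstacle. I would invoke the recent characterization of optimal Banach--Mazur position: if $B_2^n \subseteq B_X \subseteq d\,B_2^n$ is optimal, then there exist contact points $u_j \in \bd B_X \cap \bd B_2^n$ and $v_l \in \bd B_X \cap d\,\bd B_2^n$, together with weights, satisfying a John-type balance
\[
\sum_j \alpha_j u_j \otimes u_j = \sum_l \beta_l v_l \otimes v_l .
\]
Conversely, such a decomposition certifies optimality. I would build a certificate for $X$ out of certificates for the individual $X_i$. Take the optimal positions of the $X_i$ with their balanced contact point families, scale the blocks by the $\lambda_i$ from the upper bound construction, and place the block-$i$ inner and outer contact points into $X$, embedded in the $i$-th coordinate. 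Contact points of one kind must be ``mixed'' across blocks: for $p > 2$, vectors of the form $(t_1 v^1, \ldots, t_k v^k)$ built from outer contact points $v^i$ of each block, with $t_i$ chosen so that equality holds in H\"older's inequality, become touching points of the outer sphere. Averaging over sign choices for these mixed vectors kills the cross terms, so their sum of tensors is block-diagonal. The weights $\alpha, \beta$ are then tuned so that both sides agree block-by-block; the H\"older equality condition should force exactly $\lambda_i \propto d_i^{r/2}$, consistently with the upper bound. This proves optimality and hence $\dbm(X,\ell_2^n) \ge D$.

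The delicate points are twofold. First, one must verify that the mixed vectors really lie on the appropriate sphere and on $\bd B_X$. Second, one must get the precise form of the characterization right, for instance whether it involves a normalization such as trace conditions or a centering condition (the latter being irrelevant by symmetry). If the characterization is instead phrased via operators, I would fall back on a dual argument: for any $T$, average $\|T\|\,\|T^{-1}\|$ over the group generated by the sign changes $x^i \mapsto \pm x^i$ on the blocks, which are isometries of $X$, to reduce to block-diagonal $T$. The problem then becomes a scalar optimization over the block scalings, where for fixed $d_i$ the H\"older bound is attained.
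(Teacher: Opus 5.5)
Your architecture is the paper's. You place each summand in optimal Euclidean position and certify optimality of the $\ell_p$-sum via Theorem~\ref{thm:ader_cond}, using each block's inner contact points in their own coordinates and mixed outer contact points whose cross terms cancel; $p<2$ then follows by duality.

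However, the one explicit choice you commit to, $\lambda_i \propto d_i^{r/2}$, is wrong, and both halves of your argument rest on it. In your normalization $|x^i|_i \le \|x^i\|_{X_i} \le d_i |x^i|_i$ with $p \ge 2$, the H\"older estimates give
\[
    \min_i \frac{\lambda_i}{d_i}\, \|x\| \le |x| \le \|(\lambda_1, \ldots, \lambda_k)\|_r \, \|x\|.
\]
Both constants are attained, at an embedded inner contact point and at a mixed outer contact point respectively. So the distortion is $\|(\lambda_i)\|_r / \min_i (\lambda_i/d_i)$, which for $p>2$ equals $\|(d_i)\|_r$ only when $\lambda_i \propto d_i$. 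For example, with $p = r = 4$ and $(d_1,d_2)=(1,2)$, your choice $\lambda=(1,4)$ gives $257^{1/4}\approx 4.00$ instead of $17^{1/4}\approx 2.03$.

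The right choice is exactly the paper's normalization $\B_2^{n_i}\subseteq C_i\subseteq d_i\B_2^{n_i}$, in which all blocks share the same inner radius. This is also what the certificate needs: every embedded inner contact point $(0,\ldots,y,\ldots,0)$ must lie on the single inner sphere of the sum. Under your scaling this fails for $2<p<\infty$ as soon as the $d_i$ differ.

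The exponent you remember belongs to the mixed outer points, not to the blocks. H\"older equality forces coefficients $t_i\propto d_i^{2/(p-2)}$ on outer contact points of $X_i$-norm $1$, and then the $i$-th block of the mixed point has Euclidean length $\propto d_i^{r/2}$. H\"older equality does not determine the block scaling at all.

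After this correction your construction coincides with the paper's. The paper treats $k=2$ and inducts, obtains $\sum_i\beta_i z^i=0$ by pairing contact points with their negatives (your sign averaging), and normalizes the outer weights rather than rescaling inner weights blockwise. Two small points remain. First, $2/(p-2)$ degenerates at $p=2$; the paper uses continuity there, or you can use $\dbm(X,\ell_2^n)\ge\max_i d_i$, since each $X_i$ is isometric to a subspace of $X$. Second, Theorem~\ref{thm:ader_cond} needs no trace or centering condition.

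Your fallback, once stated correctly, is a genuinely different and complete proof of the lower bound that avoids Theorem~\ref{thm:ader_cond}. You should not average $\|T\|\,\|T^{-1}\|$ but the quadratic form $Q(x)=|Tx|_2^2$. If $\|x\|\le\sqrt{Q(x)}\le d\|x\|$, averaging $Q$ over the block sign flips (isometries of $X$) preserves these inequalities and gives $Q(x)=\sum_i q_i(x^i)$. Here the $q_i$ are arbitrary positive definite forms, not scalings. Let $m_i, M_i$ be the minimum and maximum of $\sqrt{q_i}$ on the unit sphere of $X_i$; then $m_i\ge 1$ and $M_i\ge d_i m_i\ge d_i$. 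For $p\ge 2$, testing on $(t_1z^1,\ldots,t_kz^k)$ with maximizers $z^i$ and H\"older-extremal $t_i$ gives $d\ge\|(M_1,\ldots,M_k)\|_r\ge\|(d_1,\ldots,d_k)\|_r$, endpoints included.

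This route is shorter than the contact-point certificate but depends on the sign flips being isometries. The paper's certificate does not, and it extends to non-symmetric summands via the balancing condition (see the remark at the end of Section~\ref{sec:euclidean}).
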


The proof is presented in Section~\ref{sec:euclidean} and relies on a recent characterization \cite{grundbacherkobos} of the optimal Banach--Mazur position with respect to the Euclidean ball by an algebraic decomposition on contact points (see Theorem~\ref{thm:ader_cond} below),
inspired by an old paper of Ader \cite{ader}.
A key observation is that the $\ell_p$-sum operation preserves the optimal Banach--Mazur position when each summand is already optimally positioned relative to Euclidean space.
Some further consequences of Theorem~\ref{thm:l2_sum} are discussed in Section~\ref{sec:euclidean}.

In Section~\ref{sec:cones_to_fixed}, we turn our attention to $\ell_1$-sums with $\ell_1^m$ and related cone constructions.
Geometrically, if $X = (\R^n, \|\cdot\|)$ is a normed space with unit ball $B_X \subseteq \R^n$,
then the unit ball of $X \oplus_1 \ell_1^m$ in $\R^{n+m}$ is a double cone with $m$ symmetric vertex pairs over the base $B_X$.
This operation turns out to preserve the Banach--Mazur distance to $\ell_1$ space, provided that the initial distance is not too large.
By duality, an analogous result holds for $\ell_\infty$-sums with $\ell_\infty^m$,
which geometrically corresponds to forming a cylinder over the unit ball of $X$.

\begin{theorem}
\label{thm:l1_sum}
Let $n, m \geq 1$ be integers and let $X$ be an $n$-dimensional real normed space with $\dbm(X, \ell_1^n) \leq 3$. Then
\[
    \dbm(X \oplus_1 \ell^m_1, \ell^{n+m}_1)
    = \dbm(X, \ell_1^n).
\]
Similarly, if $\dbm(X, \ell_{\infty}^n) \leq 3$, then
\[
    \dbm(X \oplus_{\infty} \ell^m_{\infty}, \ell^{n+m}_{\infty})
    = \dbm(X, \ell_{\infty}^n).
\]
\end{theorem}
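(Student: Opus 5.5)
The upper bound is immediate. If $T: X \to \ell_1^n$ realizes $\dbm(X,\ell_1^n)$, then $T \oplus \mathrm{id}$ maps $X \oplus_1 \ell_1^m$ onto $\ell_1^n \oplus_1 \ell_1^m = \ell_1^{n+m}$. Its norm is $\max(\|T\|,1)$ and the norm of its inverse is $\max(\|T^{-1}\|,1)$. After rescaling $T$ so that $\|T^{-1}\| = 1$, the product of the two norms is $\|T\| = \dbm(X,\ell_1^n)$.

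The second statement follows from the first by duality. We have $(X \oplus_\infty \ell_\infty^m)^* = X^* \oplus_1 \ell_1^m$, and $\dbm$ is invariant under passing to duals. So it suffices to prove the $\ell_1$ lower bound. By induction on $m$ (the hypothesis $\le 3$ is preserved, since the distance does not increase), it is enough to treat $m=1$.

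For the lower bound we work geometrically. Set $K = B_X \subseteq \R^n$ and let $C = \conv(K \cup \{\pm e_{n+1}\}) \subseteq \R^{n+1}$ be the double cone. Let $d = \dbm(C, \B_1^{n+1})$ and assume $d < 3$; the case $d \ge 3$ must be handled separately, see the last paragraph. Since both bodies are symmetric, take a linear position with $P \subseteq C \subseteq d P$, where $P = \conv\{\pm v_1,\dots,\pm v_{n+1}\}$ is a cross-polytope. Intersecting with the hyperplane $H = \R^n \times \{0\}$ gives $P \cap H \subseteq K \subseteq d(P\cap H)$. The difficulty is that $P \cap H$ need not be a cross-polytope. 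The plan is therefore to show that one can choose a cross-polytope $Q \subseteq H$ with $Q \subseteq K \subseteq dQ$. This splits into two cases.

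\emph{Case 1: some pair $\pm v_j$ lies in $H$ or can be moved there.} The natural idea is to show that some vertex pair of $P$ can be replaced by $\pm e_{n+1}$, or projected along $e_{n+1}$, without breaking the sandwich. Concretely, let $\pi$ be the projection onto $H$ along $e_{n+1}$. Then $\pi(C) = K$, so $\pi(P) \subseteq K \subseteq d\,\pi(P)$. Now $\pi(P) = \conv\{\pm \pi v_i\}$ is a symmetric polytope with at most $n+1$ vertex pairs in dimension $n$. If some $\pi v_j$ lies in the convex hull of the other $\pm\pi v_i$, we can drop it and obtain a cross-polytope $Q$ with $Q \subseteq \pi(P) \subseteq K$ and $K \subseteq d\,\pi(P) = dQ$. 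This gives $\dbm(X,\ell_1^n) \le d$.

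\emph{Case 2: no vertex can be dropped.} Here $\pi(P)$ genuinely has $n+1$ vertex pairs, so there is a linear relation $\sum \lambda_i \pi v_i = 0$ with all $\lambda_i \neq 0$ and $\|\lambda\|_1$-normalized weights all $<1$ in absolute value. This is where the hypothesis must enter. Combine this with $\pm e_{n+1} \in C \subseteq dP$, and with the containment $P \subseteq C$, which means each $v_i = (\pi v_i, t_i)$ satisfies $\|\pi v_i\|_K + |t_i| \le 1$. The goal is to derive a contradiction with $d<3$, or to show $d \ge \dbm(X,\ell_1^n)$ by choosing a different subset of $n$ vectors and a suitable rescaling. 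I would first try the following: write $e_{n+1} = \sum \mu_i v_i$ with $\sum|\mu_i| \le d$. Pick the index $j$ maximizing an appropriate ratio. Then take $Q$ to be the cross-polytope on $\{\pi v_i\}_{i\neq j}$, scaled by a factor $s$, and bound $K \subseteq dQ$ using $\pi v_j$ expressed through the others. The numerical bookkeeping should close exactly when $d \le 3$, in the spirit of the general bound $\dbm(\text{polytope with } n+1 \text{ pairs}, \B_1^n)$ being controlled.

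This Case 2 estimate is the main obstacle, and it is where the constant $3$ should appear sharply. Finally, if $d \ge 3$ while $\dbm(X,\ell_1^n) \le 3$, then $d \ge 3 \ge \dbm(X,\ell_1^n)$, so the lower bound holds trivially. This is why the threshold hypothesis suffices.
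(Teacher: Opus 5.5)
Your upper bound, the duality reduction, the reduction to $m=1$, and Case~1 are all fine. Your position $P\subseteq C\subseteq dP$ is the paper's position $C\subseteq dP\subseteq dC$ after scaling.

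The gap is Case~2, which is the entire content of the lower bound. There you only say what you would try and that the bookkeeping ``should close''. The difficulty comes from committing to the orthogonal projection $\pi$ along $e_{n+1}$. You then retain only $\pi(P)\subseteq K\subseteq d\,\pi(P)$, and in Case~2 you must build a cross-polytope out of the $\pi v_i$ by hand. Rescaling a subfamily $Q_j=\conv\{\pm\pi v_i\}_{i\neq j}$ by $s$ gives $K\subseteq ds\,Q_j$ only if $\pi v_j\in sQ_j$. In Case~2 that forces $s>1$, and to recover the factor $d$ you would also need $sQ_j\subseteq K$. Nothing in your sketch provides this. The cross-polytope that actually works is not a rescaled subfamily but a \emph{shear} $\pi v_i-c\,t_i\,\pi v_j$ of it. (Separately, Case~2 does not force all coefficients of the linear relation to be nonzero: $\pi v_j$ can be non-droppable simply because the remaining $\pi v_i$ are linearly dependent.)

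The missing idea is to choose the projection direction adapted to the position, which removes the case split entirely.
\begin{itemize}
\item Since $e_{n+1}\in C\subseteq dP$, after a sign change some vertex satisfies $dv_j=(a,t)$ with $t\geq 1$.
\item From $dP\subseteq dC$ you get $\|a\|_K+t\le d\le \dbm(X,\ell_1^n)\le 3$, hence $r:=\|a\|_K\le 2$.
\item Project onto $H$ by $\Pi(x)=x-x_{n+1}w$ with $w=e_{n+1}+\frac{a}{rt}$ (or $w=e_{n+1}$ if $r=0$).
\item Then $\Pi(e_{n+1})=-\frac{a}{rt}$ has $K$-norm $\frac1t\le 1$, so still $\Pi(C)=K$.
\item Also $\Pi(dv_j)=(1-\frac1r)a$ has $K$-norm $|r-1|\le 1$, so $\Pi(dv_j)\in K$.
\item Since $K\subseteq dP$ and $dv_j\notin H$, the pair $\pm dv_j$ is absorbed by Lemma~\ref{lem:vertex_absorbing}. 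So $\Pi(dP)=\conv\{\pm\Pi(dv_i): i\neq j\}$ is an $n$-dimensional cross-polytope with $K\subseteq \Pi(dP)\subseteq dK$.
\end{itemize}
This gives $\dbm(X,\ell_1^n)\le d$. This is exactly where the constant $3$ enters: it guarantees $r\le 2$, i.e.\ $|r-1|\le 1$. It has nothing to do with your separate (harmless) case $d\geq 3$.
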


With the diameter of $\BM^n_s$ being at most $n$, we obtain the following immediate corollary.

\begin{corollary}
For integers $n, m \geq 1$ and a real normed space $X$ with $\dim(X) = n \leq 3$,
\[
    \dbm(X \oplus_1 \ell^m_1, \ell^{n+m}_1)
    = \dbm(X, \ell_1^n)
        \quad \text{ and } \quad
    \dbm(X \oplus_{\infty} \ell^m_{\infty}, \ell^{n+m}_{\infty})
    = \dbm(X, \ell_{\infty}^n).
\]
\end{corollary}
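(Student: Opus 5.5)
The corollary follows directly from Theorem~\ref{thm:l1_sum}. The plan is to check its hypothesis, namely that $\dbm(X,\ell_1^n) \leq 3$ and $\dbm(X,\ell_\infty^n) \leq 3$ whenever $n = \dim(X) \leq 3$.

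For this we use the classical bound $\dbm(X,Y) \leq n$ for any two $n$-dimensional real normed spaces. It follows from John's theorem: $\dbm(X,\ell_2^n) \leq \sqrt{n}$ and $\dbm(Y,\ell_2^n) \leq \sqrt{n}$. Since the Banach--Mazur distance is multiplicative and satisfies the triangle inequality $\dbm(X,Y) \leq \dbm(X,Z)\,\dbm(Z,Y)$, we get $\dbm(X,Y) \leq \sqrt{n}\cdot\sqrt{n} = n$.

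Taking $Y = \ell_1^n$, and separately $Y = \ell_\infty^n$, gives both distances at most $n \leq 3$. Theorem~\ref{thm:l1_sum} then applies to each of the two statements and yields both equalities, for every $m \geq 1$.

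There is no real obstacle. The only point to watch is that the diameter bound is applied to the symmetric compactum $\BM^n_s$, which is legitimate because $X$, $\ell_1^n$ and $\ell_\infty^n$ are all normed spaces with $0$-symmetric unit balls. The trivial cases $n = 1, 2$ are included, since there the distances are at most $1$ and $\sqrt{2}$ (indeed $\ell_1^2$ and $\ell_\infty^2$ are isometric).
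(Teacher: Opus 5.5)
Your proposal is correct and matches the paper, which obtains the corollary immediately from Theorem~\ref{thm:l1_sum} using the bound $n$ on the diameter of $\BM_s^n$ (which you justify via John's theorem and the multiplicative triangle inequality). One slip in your closing aside: for $n=2$ the distance to $\ell_1^2$ can reach $\frac{3}{2}$ (the regular hexagon, by Stromquist's result cited in the paper), not only $\sqrt{2}$. This is harmless, since the general bound $n \leq 3$ already covers that case.
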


Since the $\ell_1$-sum corresponds to forming a centrally symmetric double cone over a symmetric base,
it is natural to consider also (non-symmetric) single cones over arbitrary convex bases.
In this setting, we obtain the following analog of Theorem~\ref{thm:l1_sum},
where the cross-polytope (i.e., the unit ball of $\ell_1$) is replaced by the simplex.
By $\Delta^n \subseteq \R^n$ we mean any \cemph{simplex} in $\R^n$,
and by $\conv(A)$ the \cemph{convex hull} of a set $A \subseteq \R^n$.

\begin{theorem}
\label{thm:simplex}
Let $n, m \geq 1$ be integers and let $B \subseteq \R^{n+m}$ be an $n$-dimensional convex body with $\dbm(B, \Delta^n) \leq 2$.
Let $v^1, \ldots, v^m \in \R^{n+m}$ be arbitrary vectors such that $C = \conv( B \cup \{v^1, \ldots, v^m \} ) \subseteq \R^{n+m}$ is an $(n+m)$-dimensional convex body.
Then
\[
    \dbm(C, \Delta^{n+m})
    = \dbm(B, \Delta^n).
\]
\end{theorem}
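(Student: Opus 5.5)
Write $d = \dbm(B,\Delta^n)$. The plan is to prove the two inequalities separately.

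\textbf{Upper bound $\dbm(C,\Delta^{n+m}) \le d$.} This is a direct construction. After an affine change of coordinates we may assume that $B$ lies in $\R^n \times \{0\}$ and that $v^1,\ldots,v^m$ are affinely independent of $B$. Since $C$ is $(n+m)$-dimensional, the apex set spans the remaining $m$ directions. By affine invariance we may move each $v^j$ to $e_{n+j}$ while keeping $B$ fixed. This works because an affine map can fix $\lin(B)$ pointwise and send the $v^j$ to any affinely independent points in general position relative to the hyperplane-type subspace. So $C = \conv(B \cup \{e_{n+1},\ldots,e_{n+m}\})$, an iterated cone.

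Now take a simplex $S \subseteq \R^n$ and $\lambda$ with $S \subseteq B + u \subseteq d\,(S - c) + c$, where $c$ is the homothety center. Form the cones $S' = \conv(S \cup \{e_{n+j}\})$ and $S'' = \conv(\text{(scaled } S) \cup \{e_{n+j}\})$, with the apices adjusted by the same homothety. Because taking cones commutes with inclusion and, with the apex scaled compatibly, with homothety, we get $S' \subseteq C \subseteq d\,S'$ up to translation. This gives $\dbm(C,\Delta^{n+m}) \le d$. The step needs care: the homothety of the large simplex must be centered on the base hyperplane so that it scales the apices too. One chooses the apex of the outer simplex at $c + d(e_{n+j}-c)$; the inner cone then sits inside the outer one.

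\textbf{Lower bound $\dbm(C,\Delta^{n+m}) \ge d$.} By induction on $m$ it suffices to treat $m=1$, since $C$ is a single cone over the $(n+m-1)$-dimensional cone with $m-1$ apices, and the hypothesis is preserved by the upper bound combined with the inductive claim. So let $C = \conv(B\cup\{v\})$ and suppose $T\Delta + w \subseteq C \subseteq \rho(T\Delta + w)$ up to translation, with $\rho = \dbm(C,\Delta^{n+1})$; we may assume $\rho < d \le 2$. The goal is to produce a simplex in the base hyperplane $H$ giving $\dbm(B,\Delta^n) \le \rho$.

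The natural approach is to intersect or project. Let $S$ be the inner simplex with $S \subseteq C \subseteq \rho S$ (homothety about some center $z$). First I would show that, because $\rho < 2$, the apex $v$ must be ``close'' to a vertex of $\rho S$. More precisely, $v$ must lie in the region of $\rho S$ near one vertex $q$ that contains no points of $S$-facets opposite. Then project from $v$ (central projection onto $H$) or slice by $H$. The slice $\rho S \cap H$ contains $B$. Meanwhile, central projection from $v$ of $S$ onto $H$ lies inside $B$, since $C$ is the cone over $B$ with apex $v$. If $v$ is exactly a vertex of both, these are homothetic $n$-simplices with ratio $\rho$. In general one gets an $n$-simplex from the projection of the facet of $S$ opposite the vertex nearest $v$, plus a comparison with the slice.

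\textbf{Main obstacle.} The hardest step is making this comparison exact. We need that the relevant $n$-simplices inside $B$ and containing $B$ are homothetic with ratio at most $\rho$, which is where the threshold $2$ enters. I would try the following. Label the vertices of $S$ as $s_0,\ldots,s_{n+1}$. Use $\rho < 2$ to show, via the barycentric-coordinate description of $\rho S$ (all coordinates $\ge (1-\rho)/(n+2)$-type bounds), that exactly one vertex $s_0$ lies strictly on the apex side relative to the others. Then replace $S$ by $S_0 = \conv(\{v\}\cup\{s_1,\ldots,s_{n+1}\})$-type modifications that keep $S_0 \subseteq C \subseteq \rho S_0$ while making $v$ a common vertex. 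After that, slicing by $H$ finishes the argument. Verifying that this replacement preserves the outer inclusion is the delicate computation. If it fails, I would instead use a contact-point/John-type characterization of optimal simplex position and restrict it to the base.
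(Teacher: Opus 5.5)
Your upper bound is correct and is essentially the paper's construction. The paper puts the simplex between $B_0$ and $d^*B_0$ instead. In your version, the inclusion $e_{n+j}\in d(S'-c)+c$ needs $c\in S$, which holds automatically because $S\subseteq d(S-c)+c$ with $d>1$. The reduction of the lower bound to $m=1$ is also the paper's.

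The lower bound, however, has a genuine gap: nothing in your sketch produces an $n$-simplex sandwiched with a copy of $B$ at ratio $\rho$.
\begin{itemize}
\item The central projection of $S$ from $v$ onto $H$ is the convex hull of $n+2$ points, and $(\rho S)\cap H$ is a hyperplane section of a simplex. Neither is a simplex in general.
\item $(\rho S_0)\cap H$ is the $\rho$-dilate of the section of $S_0$ by the parallel hyperplane $z+\rho^{-1}(H-z)$, not by $H$. The two agree only if the homothety center $z$ lies in $H$, which you cannot arrange in general.
\item The replacement step fails outright if $v$ is to be a common vertex of $S_0$ and $\rho S_0$, as your earlier sentence suggests. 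This forces $z=v$. Then $S_0\subseteq C\subseteq \rho(S_0-v)+v$ makes the cone from $v$ spanned by $S_0$ equal to the cone from $v$ spanned by $C$, whose section by $H$ is $B$. So $B$ would already be a simplex.
\item The hypothesis $\dbm(B,\Delta^n)\le 2$ never enters in a checkable way; the barycentric-coordinate heuristics do not identify what it is needed for.
\end{itemize}

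The missing idea is a \emph{parallel} linear projection onto $\lin(B)$, with the simplex as the middle body.
\begin{itemize}
\item Arrange $0\in C\subseteq K\subseteq dC$ with $0\in B\subseteq Y=\{x_{n+1}=0\}$, $C=\conv((B+u)\cup\{e^{n+1}+u\})$ and $K=\conv\{w^1,\dots,w^{n+2}\}$. Pick a vertex with $w^{n+2}_{n+1}\ge u_{n+1}+1$; it exists since $e^{n+1}+u\in K$.
\item Project onto $Y$ along $e^{n+1}+\frac{1}{w^{n+2}_{n+1}-u_{n+1}}\pi_Y(u)$. Since $0\in C$ gives $-\pi_Y(u)\in B$, you get $P(e^{n+1})\in B$. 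Hence $P(C)=B+P(u)$ and, $P$ being linear, $P(dC)=d(B+P(u))$ wherever the center sits.
\item The height condition together with $d\le 2$ (from the upper bound) gives $\pi_Y(w^{n+2}-u)\in(d-1)\bigl((u_{n+1}+1)B+\pi_Y(u)\bigr)\subseteq B+\pi_Y(u)$, i.e.\ $P(w^{n+2})\in P(B+u)$. This is Lemma~\ref{lem:proj}, and it is the only place the threshold $2$ is used.
\item Since $B+u\subseteq K$ and $w^{n+2}\notin B+u$, Lemma~\ref{lem:vertex_absorbing} shows that $w^{n+2}$ is absorbed. So $P(K)=\conv\{P(w^1),\dots,P(w^{n+1})\}$ is an $n$-simplex, and $B+P(u)\subseteq P(K)\subseteq d(B+P(u))$ gives $d\ge\dbm(B,\Delta^n)$.
\end{itemize}
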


The key idea for establishing the matching lower bound for the Banach--Mazur distance in Theorems~\ref{thm:l1_sum}~and~\ref{thm:simplex} is the same in both cases:
for any Banach--Mazur position of the relevant convex bodies, there exists a linear projection onto a lower-dimensional subspace that maps the (double) cones to convex bodies that are affinely equivalent to the respective cones' bases.

This projective approach is further developed in Section~\ref{sec:between_cones}, where we first consider single cones in $\R^3$ over planar symmetric bases.
In this setting, the distance between the cones turns out to coincide with the distance between their bases.

\begin{theorem}
\label{thm:3d_cones}
Let $B_1, B_2 \subseteq \R^3$ be $2$-dimensional symmetric convex bodies
and let $v^1, v^2 \in \R^3$ be arbitrary vectors such that $C_i = \conv(B_i \cup \{v^i\})$ is a $3$-dimensional convex body for $i=1, 2$.
Then
\begin{equation}
\label{eq:3d_cones_dist}
    \dbm(C_1, C_2)
    = \dbm(B_1, B_2).
\end{equation}
\end{theorem}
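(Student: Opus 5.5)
We prove the two inequalities in \eqref{eq:3d_cones_dist} separately. The upper bound $\dbm(C_1,C_2) \leq \dbm(B_1,B_2)$ is a construction. All cones over a fixed base with an apex off its plane are affinely equivalent, so we may assume $B_1, B_2 \subseteq \R^2 \times \{0\}$ are $0$-symmetric and both apexes equal $e_3$. Take a linear map $S$ of $\R^2$ with $B_1 \subseteq S B_2 \subseteq \rho B_1$, where $\rho = \dbm(B_1,B_2)$. The naive extension $S \oplus \mathrm{id}$ scales only the base, and the cone over $\rho B_1$ with apex $e_3$ is not contained in a $\rho$-dilate of $C_1$. We therefore also use the freedom of translating and dilating in the $e_3$ direction. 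Concretely, we compare $C_1 + u$ with $T C_2$, where $T = S \oplus \lambda$ and $u = -\mu e_3$, and choose $\lambda, \mu$ so that both inclusions $C_1+u \subseteq TC_2 + v \subseteq \rho(C_1+u)$ hold.

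Since $\rho C_1$ is the cone over $\rho B_1$ at height $0$ with apex $\rho e_3$, scaling about a point $-\mu e_3$ below the base raises the base of the dilate to height $(\rho-1)\mu$. The shape of a cone cross-section at height $t$ is $(1-t)$ times its base. This reduces both inclusions to finitely many linear inequalities in $\lambda, \mu, v$: compare base levels, apex heights, and the linear interpolation of cross-sections. Everything reduces to $B_1 \subseteq SB_2 \subseteq \rho B_1$. I expect a choice such as $\mu = 1/(\rho+1)$, with the inner cone's apex at the top of the outer one, to work. Symmetry of the bases should be what makes the cross-section at the right height exactly $\rho$ times larger.

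The lower bound $\dbm(C_1,C_2) \geq \dbm(B_1,B_2)$ follows the projective approach announced after Theorem~\ref{thm:simplex}. Fix an optimal position $C_1 \subseteq TC_2 + v \subseteq \rho(C_1+u)$ with $\rho = \dbm(C_1,C_2)$. We look for a projection $P$ onto a plane such that $P(C_1)$ and $P(TC_2+v)$ are affine images of $B_1$ and $B_2$. Projecting along the direction through the apex and base center of a cone sends the cone exactly onto its base, so it suffices to find a single direction that works for both cones simultaneously, i.e.\ one through both apexes and centers. In general the two such lines are not parallel. So instead we find a direction $w$ such that $C_1$ projects to an image of $B_1$ and $TC_2$ projects to a body sandwiched between two homothets of an image of $B_2$ with the correct ratio. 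Alternatively we use central projection from the apex $v^1$ onto the base plane. This is a projective map that preserves containment, and cones with that apex project to their bases. If the apex of the middle body $TC_2+v$ lies between, projective maps still preserve the incidences we need.

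The main obstacle is this simultaneous projection. The apexes and base planes of $C_1$ and of $TC_2+v$ need not align, and the two outer bodies $C_1$ and $\rho(C_1+u)$ must both project to homothetic copies. The plan is first to show, by a contact-point or maximality argument, that in an optimal position the apex of $TC_2+v$ can be assumed to lie on the segment from the apex of $C_1$ to that of $\rho(C_1+u)$ (or to coincide with one of them), in analogy with how Theorem~\ref{thm:simplex} is handled. A parallel projection along that segment then gives $P(C_1) \subseteq P(TC_2+v) \subseteq \rho P(C_1+u)$, with the projections affine equivalent to $B_1$, $B_2$ and a dilate of $B_1$. Symmetry of the bases and dimension $3$ are needed here to guarantee that a suitable direction exists, since otherwise the projection of $TC_2$ might be a non-degenerate combination of base and apex. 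This yields $\dbm(B_1,B_2) \leq \rho$.
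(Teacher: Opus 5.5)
There is a genuine gap in the lower bound. It rests on the claim that, in a position realizing $\dbm(C_1,C_2)$, the apex of the middle cone can be assumed to lie on the segment joining the apexes of $C_1$ and of its $\rho$-dilate. You give no argument for this (``a contact-point or maximality argument'' is not one), and none is apparent. You must start from an arbitrary optimal position, whose ratio could a priori be smaller than $\dbm(B_1,B_2)$. The position from your upper-bound construction does have this form, but using it presupposes it is optimal, which is the inequality you are proving.

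More importantly, the claim silently discards the configurations that are the real difficulty. Normalize as the paper does: $0 \in C_1 = \conv((B_1+u)\cup\{e^3+u\}) \subseteq C_2 = \conv(B_2\cup\{v\}) \subseteq dC_1$. In Theorem~\ref{thm:simplex} some vertex of the simplex is forced to lie at least as high as the apex of the cone. Here, by contrast, the apex $e^3+u$ of $C_1$ may be covered by the \emph{base} $B_2$ of $C_2$. Then $v$ lies below the apex level of $C_1$ ($u_3\le v_3<u_3+1$) or even below its base plane ($v_3<u_3$). The paper must rule these cases out. It projects so that one cone becomes a triangle while the other becomes an affine image of its symmetric base, or lies within a factor $\mu<\frac43$ of one; the latter is what Lemma~\ref{lem:triangles} provides. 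This gives $\dbm(B_i,\Delta^2)\le \mu d<\frac43\cdot\frac32=2$, contradicting the fact that every planar symmetric body is at distance exactly $2$ from the triangle. The bound $d\le\dbm(B_1,B_2)\le\frac32$ comes from the upper bound combined with Stromquist's theorem. None of these ingredients, nor the case split by the height of $v$, appears in your plan, and they are exactly where symmetry of the bases and dimension $3$ are used.

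Even in the favorable case $v_3\ge u_3+1$ you do not get alignment. What works is the projection $P(x)=x-x_3w$ with $w=e^3+\frac{1}{v_3-u_3}\pi_Y(u)$ from Lemma~\ref{lem:proj}. It satisfies $P(e^3)\in B_1$ and $P(v-u)\in B_1$, the latter using $d\le 2$. Lemma~\ref{lem:vertex_absorbing} then gives $P(C_1)=B_1+P(u)$ and $P(C_2)=P(B_2)$.

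Your fallback of a central projection from the apex also fails. A central projection does not map $\rho(C_1+u)$ to a homothet of the image of $C_1$ unless the centre is the homothety centre. Moreover, the apex of $C_1$ lies in $C_1\subseteq C_2$, so the image of $C_2$ need not even be bounded.

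Finally, your upper-bound paragraph rests on a false claim. Take $0$-symmetric bases, apexes at $e^3$, and $B_1\subseteq SB_2\subseteq\rho B_1$. The cone $\rho C_1=\conv(\rho B_1\cup\{\rho e^3\})$ contains $\rho B_1$, and since $0\in\rho B_1$ it also contains $e^3\in[0,\rho e^3]$. Hence $S\oplus\mathrm{id}$ already gives $C_1\subseteq(S\oplus\mathrm{id})C_2\subseteq\rho C_1$, with no extra parameters and no use of symmetry. As written (``I expect \ldots to work''), that paragraph is not a proof.
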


As a corollary, we obtain an explicit isometric embedding of the $2$-dimensional symmetric Banach--Mazur compactum $\BM^2_s$ into the $3$-dimensional (non-symmetric) Banach--Mazur compactum $\BM^3$.
To our knowledge, this is surprisingly the first instance of such an embedding between Banach--Mazur compacta of different dimensions (apart from the trivial embedding of the $1$-dimensional compactum).
Here and throughout the paper, by $e^1, \ldots, e^n$ we denote the \cemph{canonical unit basis} of $\R^n$.

\begin{corollary}
\label{cor:3d_embedd}
The mapping $\varphi \colon \BM^2_s \to \BM^3$ defined by
\[
    \varphi(C)
    = \conv \left( (C \times \{0\}) \cup \{\pm e^3\} \right)
    \quad \text{for } C \in \BM^2_s
\]
is an isometric embedding of $\left( \BM^2_s, \dbm \right)$ into $\left( \BM^3, \dbm \right)$.
\end{corollary}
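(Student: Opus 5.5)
The plan is to derive Corollary~\ref{cor:3d_embedd} directly from Theorem~\ref{thm:3d_cones}, with three things to check: that $\varphi$ is well defined, that it lands in $\BM^3$, and that it preserves distances.

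\textbf{Well-definedness.} Let $C$ be a $0$-symmetric planar convex body; every class in $\BM^2_s$ has such a representative. If $C' = TC$ for an invertible linear $T$ of $\R^2$, consider $\tilde T = T \oplus 1$ on $\R^3$. This map sends $C \times \{0\}$ to $C' \times \{0\}$ and fixes $\pm e^3$. Hence $\varphi(C') = \tilde T \varphi(C)$, so the class of $\varphi(C)$ in $\BM^3$ depends only on the class of $C$.

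We also want to allow non-$0$-symmetric representatives $C + u$. Translating $C \times \{0\}$ while keeping $\pm e^3$ fixed gives $\conv((C+u)\times\{0\} \cup \{\pm e^3\})$. This set is the image of $\varphi(C)$ under the linear shear $x \mapsto x + x_3$-independent\ldots{} — but that is not exactly linear. It is simplest to fix $0$-symmetric representatives once and for all.

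In either case $\varphi(C)$ is a $3$-dimensional convex body, because $0$ lies in the relative interior of $C \times \{0\}$ and $\pm e^3$ lie off the plane $\R^2 \times \{0\}$.

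\textbf{Isometry.} The body $\varphi(C)$ is a double cone, not a single cone, so Theorem~\ref{thm:3d_cones} does not apply verbatim. I would therefore reduce to single cones. Let $Q = \conv(C \times \{0\} \cup \{\pm e^3\})$. Its horizontal slice at height $1/2$... A cleaner route is to observe that $\varphi(C)$ is itself a single cone over a $2$-dimensional symmetric base.

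For this, take the base $B = \conv\bigl((C\times\{0\}) \cup \{e^3\}\bigr) \cap H$ for a suitable plane $H$? That does not give a symmetric base. So instead I would reprove the statement by rerunning the argument of Theorem~\ref{thm:3d_cones} for double cones, which the abstract indicates the paper does in general: double cones over planar symmetric bases, possibly under a condition involving $\ell_1^2$.

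\textbf{Expected approach and main obstacle.} The upper bound $\dbm(\varphi(C_1),\varphi(C_2)) \le \dbm(C_1,C_2)$ is immediate. If $C_1 \subseteq TC_2 \subseteq \rho C_1$, then $T\oplus 1$ gives $\varphi(C_1) \subseteq (T\oplus1)\varphi(C_2) \subseteq \rho\,\varphi(C_1)$, using $\rho \ge 1$ and the inclusions $\pm e^3 \in \rho\varphi(C_1)$.

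The lower bound is the main obstacle. For an optimal position $\varphi(C_1) + u \subseteq S(\varphi(C_2)+v) \subseteq \rho(\varphi(C_1)+u)$, I would find a linear projection $P$ onto a plane such that $P\varphi(C_i)$ is affinely equivalent to $C_i$. This is the projective idea underlying Theorems~\ref{thm:simplex} and~\ref{thm:3d_cones}, and it would give $\dbm(C_1,C_2) \le \rho$.

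The natural candidate is a projection along a line through the two apices $\pm S e^3$, or through $\pm e^3$ of the inner body. Such a projection maps the double cone onto the projection of its base. The difficulty is to choose the direction so that it works simultaneously for both bodies. I expect this to follow from the same argument used in Theorem~\ref{thm:3d_cones}, treating the upper half of the double cone as a single cone. Accordingly, I would prove the corollary by invoking Theorem~\ref{thm:3d_cones} on the single cones $\conv(C_i\times\{0\}\cup\{e^3\})$ and transferring the optimal position via the central symmetry of the double cones.
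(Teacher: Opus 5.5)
\textbf{The lower bound is a genuine gap, and neither of your two suggestions closes it.} Read literally, $\varphi(C_i)=D_i:=\conv((C_i\times\{0\})\cup\{\pm e^3\})$ is the unit ball of $X_i\oplus_1\R$, where $X_i$ is the space with unit ball $C_i$. So the claimed isometry is exactly Theorem~\ref{thm:sym_cones} with $m=1$. The paper proves that only under the extra hypothesis $\max_i \dbm(X_i,\ell_1^2)\ge \dbm(X_1,X_2)$, and says explicitly that it does not know whether it holds in general (it would give an embedding into $\BM^3_s$). Hence your suggestion to \emph{rerun the argument of Theorem~\ref{thm:3d_cones} for double cones} is not available.

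Your final transfer step also fails. Take an optimal position $D_1\subseteq SD_2\subseteq \rho D_1$, with $S$ linear, which suffices by symmetry. Cutting with the half-space $\{x_3\ge 0\}$ gives a chain for the single cones $K_i=\conv((C_i\times\{0\})\cup\{e^3\})$ only if $S$ maps the plane $\R^2\times\{0\}$ onto itself. If it did, cutting with that plane would already give $C_1\subseteq S'C_2\subseteq\rho C_1$, with no cones involved. Nothing forces $S$ to respect the base plane. The construction after Theorem~\ref{thm:sym_cones} shows how good positions of double cones can trade base directions for apex directions. There is also no general inequality $\dbm(K_1,K_2)\le\dbm(D_1,D_2)$ you could invoke instead.

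\textbf{How to repair it.} The $\pm$ in the displayed formula should be a single apex: the intended map is $\varphi(C)=\conv((C\times\{0\})\cup\{e^3\})$. This is why the codomain is the non-symmetric compactum $\BM^3$, and why the paper calls the double-cone version a possible strengthening into $\BM^3_s$. With that reading the corollary is immediate, and this is how the paper obtains it.
\begin{itemize}
\item The isometry $\dbm(\varphi(C_1),\varphi(C_2))=\dbm(C_1,C_2)$ is Theorem~\ref{thm:3d_cones} with $B_i=C_i\times\{0\}$ and $v^i=e^3$.
\item Injectivity follows from the isometry.
\item Well-definedness is your $T\oplus 1$ argument. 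For a translated representative $C+u$, the map $x\mapsto x+(1-x_3)(u,0)$ is affine (a shear plus a translation). It fixes $e^3$ and sends $\varphi(C)$ to the cone over $C+u$. Since $\BM^3$ only requires affine equivalence, your \emph{not exactly linear} worry disappears.
\end{itemize}
Your upper-bound argument is correct in either reading. For the literal double-cone statement, however, you have no lower bound, and the paper does not have one either.
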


It is natural to conjecture that an equality analogous to \eqref{eq:3d_cones_dist} holds for centrally symmetric double cones, which would strengthen Corollary~\ref{cor:3d_embedd} to an embedding of $\BM_s^2$ into $\BM_s^3$.
We do not know whether this is true in general.
However, we can prove it under the additional assumption that at least one base is not too close to $\ell_1^2$, in exchange for embedding the double cones into arbitrary dimensions.
The necessity of this assumption is discussed after the proof of the theorem in Section~\ref{sec:between_cones} (see also Open Question~\ref{que:l1} and Remark~\ref{rem:l1} for related details).

\begin{theorem}
\label{thm:sym_cones}
Let $X_1$ and $X_2$ be $2$-dimensional real normed spaces
with $\dbm(X_1, \ell_1^2) \geq \dbm(X_1,X_2)$ or $\dbm(X_2, \ell_1^2) \geq \dbm(X_1,X_2)$.
Then, for any integer $m \geq 1$,
\[
    \dbm(X_1 \oplus_1 \ell_1^m, X_2 \oplus_1 \ell_1^m)
    = \dbm(X_1,X_2).
\]
\end{theorem}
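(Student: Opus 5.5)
The upper bound $\dbm(X_1 \oplus_1 \ell_1^m, X_2 \oplus_1 \ell_1^m) \leq \dbm(X_1,X_2)$ is immediate. If $T$ is an optimal operator for $X_1, X_2$, then $T \oplus \mathrm{id}_{\R^m}$ maps the double cone $B_{X_1 \oplus_1 \ell_1^m}$ into $B_{X_2 \oplus_1 \ell_1^m}$ and back with the same constants. So the whole work is the lower bound. Without loss of generality $\dbm(X_1,\ell_1^2) \geq \dbm(X_1,X_2) =: d$; the other case is symmetric after replacing $T$ by $T^{-1}$.

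For the lower bound I would fix an invertible linear $T$ with $K_1 \subseteq T K_2 \subseteq \rho K_1$, where $K_i = \conv(B_{X_i} \cup \{\pm e^{3}, \ldots, \pm e^{m+2}\})$ is the double cone in $\R^{2+m}$. I aim to show $\rho \geq \min\{d, \dbm(X_1,\ell_1^2)\}$, which equals $d$ by the assumption. Following the projective strategy announced for Theorems~\ref{thm:l1_sum} and~\ref{thm:simplex}, I look for a linear projection $P$ of rank $2$ such that $P K_1$ is linearly equivalent to $B_{X_1}$, $P T K_2$ is linearly equivalent to $B_{X_2}$, and the inclusions $PK_1 \subseteq PTK_2 \subseteq \rho PK_1$ persist. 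Projections automatically preserve inclusions, so the work is to choose $P$ so that both images are the bases rather than something larger. For $K_1$ this happens exactly when $\ker P$ is spanned by vectors $e^{j} + w^j$ with $w^j \in B_{X_1}$, up to the $X_1$-plane. The vertices of $K_1$ then project into the base. For $TK_2$ we likewise need $\ker P$ to contain $m$ independent directions of the form $T e^{j} - T u^j$ with $u^j \in B_{X_2}$.

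The key step is therefore a combinatorial/topological selection. Consider the $m$ vertex pairs $\pm Te^{j}$ of $TK_2$. Because $TK_2 \subseteq \rho K_1$, each $Te^j$ decomposes as $a^j + \sum_i t_i^j e^{i+2}$ with $\|a^j\|_{X_1} + \|t^j\|_1 \leq \rho$. I would proceed by induction on $m$, projecting away one vertex direction at a time. At each step there are two options. Either we can find a direction, pairing a vertex $e^{j}$ of $K_1$ with a vertex of $TK_2$, along which the projection collapses both double cones onto double cones with one fewer vertex pair, with the inclusion preserved. Or every candidate fails, and then I expect to extract a strong structural conclusion: many vertices of $TK_2$ lie near the $X_1$-plane, forcing $K_1 \cap \lin(B_{X_1})$ to be squeezed. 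This would produce a 2-dimensional section/projection pair where a parallelogram-type body, i.e.\ $\ell_1^2$, sits between $B_{X_1}$ and $\rho B_{X_1}$. That yields $\rho \geq \dbm(X_1,\ell_1^2)$. The natural candidate in the failure case is the parallelogram spanned by projections of two vertices, one from each cone. Reducing to $m=1$ this way, the base case is again handled by the same dichotomy in $\R^3$.

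The main obstacle is this failure case: proving that when no good collapsing direction exists, one really gets an $\ell_1^2$-ball sandwiched between $B_{X_1}$ and $\rho B_{X_1}$, with the constant exactly $\rho$ and not just up to a factor. I would first try this in $m=1$, $\R^3$. There, with $Te^3 = a + t e^3$, one collapses along $e^3 - a/t$ when $|t|$ is large. When $|t|$ is small, the section $TK_2 \cap \lin(B_{X_1})$ contains $\conv(TB_{X_2}\text{-part}, \pm a)$, and one would compare $\conv(\pm a, \pm b)$ with $B_{X_1}$ to get the $\ell_1^2$ bound. This also explains why the hypothesis involving $\dbm(\cdot,\ell_1^2)$ appears.
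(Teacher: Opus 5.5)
Your upper bound and your reduction of the lower bound to $\rho \geq \min\{\dbm(X_1,X_2), \dbm(X_1,\ell_1^2)\}$ are both right. The lower-bound argument, however, has a genuine gap exactly where you flag it. The ``failure case'' is left as an expectation, and the one concrete mechanism you offer for it does not work.

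In your $m=1$ sketch you pass to the section $TK_2 \cap \lin(B_{X_1})$. Sections do preserve the inclusions, but a central planar section of a double cone whose apexes lie off the cutting plane is generally not a parallelogram; for a cone over a disc it has curved boundary. Moreover, knowing only that the section \emph{contains} some $\conv\{\pm a,\pm b\}$ gives no parallelogram sandwiched between $B_{X_1}$ and $\rho B_{X_1}$. Note also that the in-plane component $a$ of $Te^3$ need not lie in $TK_2$. Your target of a single rank-$2$ projection taking both cones to their bases is also too strong: it is not achievable in general, and the proof does not need it.

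The missing idea is a specific choice of projection direction, which produces a clean dichotomy.
\begin{itemize}
\item Fix the apex $e^{m+2}$ of the inner cone $K_1$. Since $e^{m+2} \in TK_2$, there is an extreme point $v$ of $TK_2$ with $v_{m+2} \geq 1$. Project with $P(x) = x - x_{m+2}\, v/v_{m+2}$.
\item We have $\rho \leq \dbm(X_1,X_2) \leq 2$. Then $v \in \rho K_1$ gives $\|P(e^{m+2})\|_{K_1'} \leq (2-v_{m+2})/v_{m+2} \leq 1$, where $K_1'$ is the double cone over $B_{X_1}$ with the remaining $m-1$ apex pairs. Hence $P(K_1) = K_1'$ \emph{always}: nothing ever fails on the $K_1$ side.
\item For $TK_2$ there are only two options.
\begin{itemize}
\item If $v$ is an apex $\pm Te^j$, then $P(TK_2)$ is a double cone over a nondegenerate image of $B_{X_2}$ with $m-1$ apex pairs (by a dimension count). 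Induction on $m$, starting at the trivial case $m=0$, gives $\rho \geq \dbm(X_1,X_2)$.
\item If $v$ lies in the base $TB_{X_2}$, the base projects to a segment, so $P(TK_2)$ is an $(m+1)$-dimensional cross-polytope. Theorem~\ref{thm:l1_sum} (applicable since $\dbm(X_1,\ell_1^2) \leq 2$) then gives $\rho \geq \dbm(X_1 \oplus_1 \ell_1^{m-1}, \ell_1^{m+1}) = \dbm(X_1,\ell_1^2)$.
\end{itemize}
\end{itemize}
So the $\ell_1^2$ bound in the bad case comes from projecting the outer cone onto a cross-polytope and invoking the previous theorem. It does not come from a squeezing or section argument.

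For $m=1$ this reads simply: project along the highest point of $TK_2$. If that point lies in the base, $TK_2$ becomes a parallelogram sandwiched between $B_{X_1}$ and $\rho B_{X_1}$.
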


By duality, an analogous result holds for the $\ell_{\infty}$-sums with $\ell_{\infty}^m$.
As an application, we obtain a local isometric embedding of $\BM^2_s \setminus \{ \ell_1^2 \}$ into $\BM^n_s$ for any $n \geq 3$.
In the following corollary, we regard the symmetric Banach--Mazur compactum $\BM^n_s$ as consisting of isometry classes of $n$-dimensional normed spaces.

\begin{corollary}
\label{cor:sym_embedd}
For an integer $n \geq 3$, let $\psi: \BM^2_s \to \BM^n_s$ be the mapping defined by
\[
    \psi(X)
    = X \oplus_1 \ell_1^{n-2}
    \quad \text{for } X \in \BM^2_s.
\]
Suppose for $X \in \BM^2_s \setminus \{ \ell_1^2 \}$ that $X_1, X_2 \in \BM^2_s$ satisfy
$\dbm(X, X_i) \leq \dbm(X, \ell_1^2)^{\frac{1}{3}}$ for $i=1, 2$.
Then 
\[
    \dbm(\psi(X_1), \psi(X_2))
    = \dbm(X_1, X_2).
\]
In particular, $\psi$ is a local isometric embedding of $\left( \BM^2_s \setminus \{ \ell_1^2 \}, \dbm \right)$ into  $\left( \BM^n_s, \dbm \right)$.
\end{corollary}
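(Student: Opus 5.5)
The plan is to reduce the corollary to Theorem~\ref{thm:sym_cones} via the multiplicative triangle inequality for $\dbm$. Fix $X \in \BM^2_s \setminus \{\ell_1^2\}$ and put $\delta = \dbm(X,\ell_1^2) > 1$. The hypothesis is that $\dbm(X,X_i) \leq \delta^{1/3}$ for $i=1,2$. Our aim is to check that one of $X_1, X_2$ satisfies the assumption of Theorem~\ref{thm:sym_cones}, and then apply that theorem with $m = n-2 \geq 1$.

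\emph{Lower bound on the distance to $\ell_1^2$.} For $i=1,2$, the triangle inequality gives
\[
    \delta = \dbm(X,\ell_1^2) \leq \dbm(X,X_i)\,\dbm(X_i,\ell_1^2) \leq \delta^{1/3}\,\dbm(X_i,\ell_1^2),
\]
so $\dbm(X_i,\ell_1^2) \geq \delta^{2/3}$.

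\emph{Upper bound on the mutual distance.} Again by the triangle inequality,
\[
    \dbm(X_1,X_2) \leq \dbm(X_1,X)\,\dbm(X,X_2) \leq \delta^{2/3}.
\]
Combining the two bounds yields $\dbm(X_1,\ell_1^2) \geq \delta^{2/3} \geq \dbm(X_1,X_2)$. This is exactly the hypothesis of Theorem~\ref{thm:sym_cones}, which therefore gives
\[
    \dbm(X_1 \oplus_1 \ell_1^{n-2}, X_2 \oplus_1 \ell_1^{n-2}) = \dbm(X_1,X_2),
\]
that is, $\dbm(\psi(X_1),\psi(X_2)) = \dbm(X_1,X_2)$.

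\emph{Well-definedness and the local statement.} We should note that $\psi$ is well defined on isometry classes: if $T$ is an isometry $X \to X'$, then $T \oplus \mathrm{id}$ is an isometry of $X \oplus_1 \ell_1^{n-2}$ onto $X' \oplus_1 \ell_1^{n-2}$. For the final claim, $\BM^2_s\setminus\{\ell_1^2\}$ carries the topology of the metric $\log \dbm$. The set $\{Y : \dbm(X,Y) \leq \delta^{1/3}\}$ is a neighbourhood of $X$, since $\delta^{1/3} > 1$. By the computation above, $\psi$ restricted to this neighbourhood preserves distances, so $\psi$ is a local isometric embedding.

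No step is a serious obstacle: all the real work is contained in Theorem~\ref{thm:sym_cones}, and what remains is choosing the exponent $1/3$ so that the two triangle-inequality bounds meet at $\delta^{2/3}$.
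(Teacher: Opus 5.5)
Your proposal is correct and follows essentially the same route as the paper: both use the triangle inequality to get $\dbm(X_1,\ell_1^2) \geq \dbm(X,\ell_1^2)^{2/3} \geq \dbm(X_1,X)\,\dbm(X,X_2) \geq \dbm(X_1,X_2)$ and then invoke Theorem~\ref{thm:sym_cones} with $m = n-2$. Your extra remarks on well-definedness of $\psi$ and on why the ball of radius $\dbm(X,\ell_1^2)^{1/3} > 1$ is a neighbourhood (which gives the local isometric embedding) are details the paper leaves implicit.
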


As a further consequence, we lift a recent construction of equilateral sets in the $2$-dimensional symmetric Banach--Mazur compactum \cite{kobosswanepoel} to higher dimensions.
Recall that a set $S$ in a metric space is called \cemph{equilateral} if there exists $d > 0$ such that all distinct $x,y \in S$ are at distance $d$.

\begin{corollary}
\label{cor:eq_sets}
For integers $n, m \geq 2$, there exist equilateral sets of cardinality $m$ in $\BM_s^n$.
\end{corollary}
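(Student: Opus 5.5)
The plan is to combine the known planar construction from \cite{kobosswanepoel} with the local isometric embedding of Corollary~\ref{cor:sym_embedd}. For $n=2$ the statement is exactly the result of \cite{kobosswanepoel}, so assume $n\ge 3$. The key point is that equilateral sets of any cardinality $m$ can be chosen arbitrarily small, i.e.\ with common distance $d>1$ arbitrarily close to $1$, and clustered near some $X\neq \ell_1^2$.

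First I will re-examine the construction in \cite{kobosswanepoel}. If it already produces equilateral sets of size $m$ inside an arbitrarily small neighbourhood of a fixed non-$\ell_1^2$ space, we use it directly. Otherwise I will fall back on a rescaling or perturbation argument. The natural candidate is a one-parameter family of spaces, such as polygons close to a fixed regular polygon or to the Euclidean disc, for which all pairwise distances can be computed and equalised.

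Concretely, the goal is to find $X\in\BM_s^2\setminus\{\ell_1^2\}$ and an equilateral set $\{X_1,\ldots,X_m\}$ with distance $d$ such that
\[
\dbm(X,X_i)\le \dbm(X,\ell_1^2)^{1/3}\quad\text{for all } i.
\]
If $X$ is Euclidean, the right-hand side is $\sqrt{2}^{1/3}>1$. So it suffices that all $X_i$ lie within distance $2^{1/6}$ of $\ell_2^2$, or more generally within a fixed neighbourhood of some $X$ bounded away from $\ell_1^2$.

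Given such a set, Corollary~\ref{cor:sym_embedd} applies to every pair $X_i,X_j$ and yields
\[
\dbm(\psi(X_i),\psi(X_j))=\dbm(X_i,X_j)=d.
\]
Injectivity is automatic because $d>1$. Hence $\{\psi(X_1),\ldots,\psi(X_m)\}$ is an equilateral set of cardinality $m$ in $\BM_s^n$.

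The main obstacle is the localisation step: guaranteeing that equilateral $m$-sets exist inside a small ball around some non-$\ell_1^2$ point. I would first check whether the construction in \cite{kobosswanepoel} has this property, for instance by being built from small perturbations of a fixed body. If not, I would redo a variant of it near the Euclidean disc, where the required radius $2^{1/6}$ is an explicit constant.
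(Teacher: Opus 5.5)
Your plan is exactly the paper's proof. Apply Corollary~\ref{cor:sym_embedd} with $X=\ell_2^2$, where the admissible radius is $\dbm(\ell_2^2,\ell_1^2)^{1/3}=2^{1/6}$, to $m$ of the pairwise-equidistant planar spaces from \cite{kobosswanepoel}. The localisation you flag as the obstacle is supplied directly by that construction. Its $\lceil C^N\rceil$ spaces (pairwise distance $d_N^2$) have unit balls built from the regular $4N$-gon and the Euclidean disc, so $\dbm(X_i,\ell_2^2)\le d_N=1/\cos(\pi/(4N))\le 2^{1/6}$ for $N$ large. No fallback is needed, and a rescaling or perturbation would not preserve exact equality of distances anyway.
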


Proofs of Corollaries~\ref{cor:sym_embedd}~and~\ref{cor:eq_sets} are given at the end of Section~\ref{sec:between_cones}.

Throughout the paper, we frequently use the equality $\dbm(X,Y) = \dbm(B_X, B_Y)$ for normed spaces $X = (\R^n,\|\cdot\|_X)$, $Y = (\R^n,\|\cdot\|_Y)$ and their respective unit balls $B_X, B_Y \subseteq \R^n$, choosing whichever viewpoint is more convenient.
We also refer to the well-known duality of $\ell_p$-sums, namely
\[
    (X_1 \oplus_p \ldots \oplus_p X_k)^*
    = X^*_1 \oplus_{p^*} \ldots \oplus_{p^*} X^*_k
\]
for any finite-dimensional normed spaces $X_1, \ldots, X_k$ and $p \in [1, \infty]$,
where $X^*$ denotes the \cemph{dual space} of a normed space $X$ and $p^*=\frac{p}{p-1} \in [1,\infty]$ is the conjugate exponent of $p$.
Given a vector $x \in \R^n$, we write $x_1, \ldots, x_n$ for the coordinates of $x$ in the canonical unit basis of $\R^n$.

\section{The Distance of \texorpdfstring{$\ell_p$}{lp}-Sums to the Euclidean Space}
\label{sec:euclidean}

In this section, we prove Theorem~\ref{thm:l2_sum}.
The main ingredient is the following characterization of the optimal Banach--Mazur position with respect to the Euclidean ball.
Similar to the characterization of the maximal-volume (John) ellipsoid by the decomposition of the identity on the contact points,
the optimal Banach--Mazur distance ellipsoids
can be described by an algebraic condition on the contact points.
For the proof, historical context, and different applications of this result, the reader is referred to \cite{grundbacherkobos}.
By $\bd(A)$ we denote the \cemph{boundary} of a set $A \subseteq \R^n$.

\begin{theorem}
\label{thm:ader_cond}
Let $K \subseteq \R^n$ be a $0$-symmetric convex body and let $R \geq r > 0$ be such that $r \B^n \subseteq K \subseteq R \B^n$. Then the following are equivalent:
\begin{enumerate}[(i)]
\item $\dbm(K,\B^n)=\frac{R}{r}$.
\item There exist integers $N, M \geq 1$, inner contact points $y^1, \ldots, y^N \in \bd(K) \cap \bd(r\B^n)$, outer contact points $z^1, \ldots, z^M \in \bd(K) \cap \bd(R\B^n)$, as well as weights $\lambda_1, \ldots, \lambda_N$, $\mu_1, \ldots, \mu_M > 0$ such that
\[
    \sum_{i=1}^N \lambda_i y^i (y^i)^T
    = \sum_{j=1}^M \mu_j z^j (z^j)^T.
\]
\end{enumerate}
\end{theorem}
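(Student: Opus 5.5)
The proof splits into the two implications. For (ii)$\Rightarrow$(i), which is the easy direction, the plan is a trace/Cauchy--Schwarz argument. For (i)$\Rightarrow$(ii), the plan is a separation argument in the space of symmetric matrices, followed by a perturbation of the identity.

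\textbf{(ii)$\Rightarrow$(i).} Always $\dbm(K,\B^n)\le R/r$, so suppose some invertible $T$ and $0<\alpha\le\beta$ satisfy $\alpha\B^n\subseteq TK\subseteq\beta\B^n$. I will show $\beta/\alpha\ge R/r$.

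1. Since $r\B^n\subseteq K$ and $y^i\in\bd(K)\cap\bd(r\B^n)$, the unique supporting hyperplane of $r\B^n$ at $y^i$ also supports $K$. Hence the support function satisfies $h_K(y^i)=r^2$.
2. From $\alpha\B^n\subseteq TK$ we get $h_{TK}(w)\ge\alpha|w|$ for all $w$. Since $h_{TK}(w)=h_K(T^Tw)$, taking $w=T^{-T}y^i$ gives $\alpha|T^{-T}y^i|\le r^2$.
3. From $TK\subseteq\beta\B^n$ we get $|Tz^j|\le\beta$.
4. Put $S=T^TT$ and $M=\sum_i\lambda_iy^i(y^i)^T=\sum_j\mu_jz^j(z^j)^T$. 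Summing the inequalities of steps 2 and 3 with the weights gives
\[
\operatorname{tr}(S^{-1}M)\le\frac{r^4}{\alpha^2}\sum_i\lambda_i,
\qquad
\operatorname{tr}(SM)\le\beta^2\sum_j\mu_j .
\]
5. Taking traces of the two expressions for $M$ gives $\operatorname{tr}M=r^2\sum_i\lambda_i=R^2\sum_j\mu_j$.
6. Cauchy--Schwarz for the Hilbert--Schmidt inner product, applied to $M^{1/2}S^{1/2}$ and $M^{1/2}S^{-1/2}$, gives $(\operatorname{tr}M)^2\le\operatorname{tr}(SM)\operatorname{tr}(S^{-1}M)$.

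Combining steps 4--6,
\[
(\operatorname{tr}M)^2\le\frac{r^4\beta^2}{\alpha^2}\cdot\frac{\operatorname{tr}M}{r^2}\cdot\frac{\operatorname{tr}M}{R^2},
\]
which simplifies to $R/r\le\beta/\alpha$.

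\textbf{(i)$\Rightarrow$(ii).} I argue by contraposition. Let $Y=\bd(K)\cap\bd(r\B^n)$ and $Z=\bd(K)\cap\bd(R\B^n)$; both are nonempty compact sets. Let $P$ and $Q$ be the closed convex cones in the space of symmetric matrices generated by $\{yy^T:y\in Y\}$ and $\{zz^T:z\in Z\}$. Both cones are pointed, and the generating sets are compact and avoid $0$.

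Failure of (ii) means $P\cap Q=\{0\}$. Compactness of the generating sets then gives a symmetric $H$ with
\[
y^THy<0 \text{ for all } y\in Y,
\qquad
z^THz>0 \text{ for all } z\in Z .
\]
Set $T_\varepsilon=I-\varepsilon H$.

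1. \emph{Outer ball.} For $z$ near $Z$,
\[
|T_\varepsilon z|^2=|z|^2-2\varepsilon z^THz+O(\varepsilon^2)<R^2 .
\]
For $z\in\bd K$ away from $Z$, there is slack $|z|\le R-\delta$, which survives a small perturbation. Hence $T_\varepsilon K\subseteq R'\B^n$ for some $R'<R$.
2. \emph{Inner ball.} I work with unit directions $u$ and the condition $h_K(T_\varepsilon^Tu)>r$. For $u$ near $Y/r$, one has $T_\varepsilon^Tu\approx u-\varepsilon Hu$, and $u^T(-Hu)>0$. Because $K$ contains the full ball $r\B^n$ near the contact direction, the first-order gain gives $h_K(T_\varepsilon^Tu)\ge r+c\varepsilon$. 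For $u$ away from $Y/r$, one has $h_K(u)\ge r+\delta$, and continuity of $h_K$ preserves this for small $\varepsilon$. Hence $r'\B^n\subseteq T_\varepsilon K$ for some $r'>r$.

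Together these give $\dbm(K,\B^n)\le R'/r'<R/r$, contradicting (i).

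The main obstacle is making the inner-ball estimate uniform. The first-order expansion is valid only in a neighbourhood of the contact directions, while the slack elsewhere must dominate an $O(\varepsilon)$ error. I would handle this by a standard compactness split of the unit sphere into a $\delta$-neighbourhood of the contact directions and its complement. On the neighbourhood, the bounds $y^THy\le-c<0$ and $z^THz\ge c>0$ persist by continuity. On the complement, the slack gives a positive gap. One then chooses $\varepsilon$ small relative to both.
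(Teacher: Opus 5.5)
Your proof is correct, but the paper offers no proof to compare it with. It imports Theorem~\ref{thm:ader_cond} from \cite{grundbacherkobos} and uses both directions in the proof of Lemma~\ref{lem:l2_sum}: (i)$\Rightarrow$(ii) to get decompositions for the optimally positioned summands $C_i$, and (ii)$\Rightarrow$(i) to certify that \eqref{eq:l2_sum_chain} is optimal. So you are supplying a self-contained argument of the standard John type where the paper gives only a citation; what the citation buys is brevity.

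Your sufficiency direction is complete as written. The support value $h_K(y^i)=r^2$, the two weighted trace bounds, and the Hilbert--Schmidt Cauchy--Schwarz inequality $(\operatorname{tr}M)^2\le\operatorname{tr}(SM)\operatorname{tr}(S^{-1}M)$ give $\beta/\alpha\ge R/r$ for every linear $T$. Restricting to linear maps and concentric balls is legitimate, since translations can be dropped for $0$-symmetric bodies.

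Your necessity direction also works. Three steps deserve one explicit line each when you write it up:
\begin{enumerate}[(a)]
\item The strict separation, with $y^THy<0$ on all of $Y$ and $z^THz>0$ on all of $Z$, comes from separating $0$ from the compact convex set $\conv(\{yy^T:y\in Y\}\cup\{-zz^T:z\in Z\})$. This set avoids $0$ precisely because $P\cap Q=\{0\}$ and every generator has positive trace.
\item The uniform slack away from the contact directions uses that $h_K(u)=r$ for a unit vector $u$ forces $ru\in\bd(K)$. Hence $\{u\in S^{n-1}:h_K(u)=r\}=Y/r$. Also $Y,Z\neq\emptyset$, because under (i) the radii $r$ and $R$ are optimal.
\item The inner ``first-order gain'' is just $h_K(w)\ge r|w|$ combined with $|(I-\varepsilon H)u|^2=1-2\varepsilon\,u^THu+\varepsilon^2|Hu|^2$.
\end{enumerate}

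A side benefit of your route is that it points toward the non-symmetric case. One would admit independent translations of the inner and outer balls as extra perturbation parameters, i.e.\ separate in a space of (matrix, vector, vector) triples with generators $(yy^T,y,0)$ and $(zz^T,0,z)$. That is the natural way to see where the additional balancing condition from \cite{grundbacherkobos2}, quoted in the remark at the end of Section~\ref{sec:euclidean}, comes from.
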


The majority of the work for proving Theorem~\ref{thm:l2_sum} is handled by the following lemma.
For $0$-symmetric convex bodies $C_i \subseteq \R^{n_i}$, where $i=1, \ldots, k$, their $\ell_p$-sum $C_1 \oplus_p \ldots \oplus_p C_k$ is the $0$-symmetric unit ball in $\R^{n_1 + \ldots + n_k}$ of the $\ell_p$-sum of the corresponding normed spaces with unit balls $C_i$.

\begin{lemma}
\label{lem:l2_sum}
Let $n_1, n_2, \ldots, n_k \geq 1$ be integers and let $C_i \subseteq \R^{n_i}$ be a $0$-symmetric convex body for $i=1, \ldots, k$.
Suppose that for $d_i = \dbm(C_i, \B_2^{n_i})$ the inclusions $\B_2^{n_i} \subseteq C_i \subseteq d_i \B_2^{n_i}$ hold for each $i=1, \ldots, k$.
Then, for every $p \in [2, \infty]$,
\begin{equation}
\label{eq:l2_sum_chain}
    \B_2^{n_1 + \ldots + n_k}
    \subseteq C_1 \oplus_p \ldots \oplus_p C_k
    \subseteq \|(d_1, \ldots, d_k)\|_r \B_2^{n_1 + \ldots + n_k}
\end{equation}
and
\[
    \dbm(C_1 \oplus_p \ldots \oplus_p C_k, \B_2^{n_1 + \ldots + n_k})
    = \|(d_1, \ldots, d_k)\|_r,
\]
where $r = \frac{2p}{p-2} \in [2,\infty]$ (with the conventions $r = \infty$ when $p = 2$ and $r = 2$ when $p = \infty$).
\end{lemma}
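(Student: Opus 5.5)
Write $C = C_1 \oplus_p \ldots \oplus_p C_k$, $N = n_1 + \ldots + n_k$ and $D = \|(d_1,\ldots,d_k)\|_r$. The plan is to prove the two inclusions in \eqref{eq:l2_sum_chain} directly. The equality $\dbm(C,\B_2^N) = D$ then follows from Theorem~\ref{thm:ader_cond} with $r=1$ and $R=D$: we build the contact-point decomposition for $C$ out of the decompositions that Theorem~\ref{thm:ader_cond} supplies for each $C_i$. The upper bound $\dbm(C,\B_2^N)\le D$ is immediate from \eqref{eq:l2_sum_chain}, so the real content is checking condition (ii).

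\textbf{Inclusions.} Write $x=(x^1,\ldots,x^k)$. Since $\B_2^{n_i}\subseteq C_i \subseteq d_i\B_2^{n_i}$, we have $\|x^i\|_2/d_i \le \|x^i\|_{C_i}\le \|x^i\|_2$.

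For the inner inclusion, take $\|x\|_2\le 1$. Because $p\ge 2$,
\[
\|x\|_C=\|(\|x^i\|_{C_i})_i\|_p\le \|(\|x^i\|_2)_i\|_p\le \|(\|x^i\|_2)_i\|_2\le 1 .
\]

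For the outer inclusion, take $\|x\|_C\le 1$ and set $t_i=\|x^i\|_{C_i}$, so $\|t\|_p\le 1$. Then
\[
\|x\|_2^2=\sum_i \|x^i\|_2^2\le \sum_i d_i^2t_i^2 .
\]
Apply Hölder with exponents $p/2$ and $(p/2)^* = p/(p-2) = r/2$:
\[
\sum_i d_i^2t_i^2\le \|(t_i^2)\|_{p/2}\,\|(d_i^2)\|_{r/2}=\|t\|_p^2\,\|d\|_r^2\le D^2 .
\]
The endpoint cases $p=2$ ($r=\infty$) and $p=\infty$ ($r=2$) are the usual degenerate versions of this estimate.

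\textbf{Contact points.} By Theorem~\ref{thm:ader_cond} applied to each $C_i$ (with $r=1$, $R=d_i$), there are inner contact points $y^{i,a}$, outer contact points $z^{i,b}$ and positive weights with
\[
\sum_a\lambda_{i,a}\,y^{i,a}(y^{i,a})^T=\sum_b\mu_{i,b}\,z^{i,b}(z^{i,b})^T=:M_i .
\]
We lift these to $C$.

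\emph{Inner points.} Embed $y^{i,a}$ as $(0,\ldots,y^{i,a},\ldots,0)$. It has $C$-norm $\|y^{i,a}\|_{C_i}=1$ and Euclidean norm $1$, so it is an inner contact point of $C$. The lifted inner sum is then $\operatorname{diag}(M_1,\ldots,M_k)$, a block-diagonal matrix.

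\emph{Outer points.} Equality in the Hölder step requires $t_i^p\propto d_i^{r}$, that is, $t_i=d_i^{r/p}/\|d\|_r^{r/p}$. So for each choice of indices $(b_1,\ldots,b_k)$ take
\[
w=\big(t_1z^{1,b_1}/d_1,\ \ldots,\ t_kz^{k,b_k}/d_k\big).
\]
Each block $z^{i,b}/d_i$ has $C_i$-norm $1$ and Euclidean norm $1$, so $\|w\|_C=\|t\|_p=1$ and, since equality holds in Hölder, $\|w\|_2=D$. Hence $w$ is an outer contact point.

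\textbf{Main obstacle: cross terms.} The outer products $ww^T$ have off-diagonal blocks. To cancel them, average over the sign choices $w_\varepsilon=(\varepsilon_1t_1z^{1,b_1}/d_1,\ldots)$ with $\varepsilon\in\{\pm1\}^k$; these are still contact points because each $C_i$ is $0$-symmetric. Averaging kills the off-diagonal blocks. Then use product weights $\prod_i\mu_{i,b_i}/S_i$ with $S_i=\sum_b\mu_{i,b}$. The $i$-th diagonal block becomes $(t_i^2/(d_i^2S_i))M_i$.

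To match $\operatorname{diag}(M_i)$ we need the proportionality constants $t_i^2/(d_i^2S_i)$ to be equal across $i$. Condition (ii) allows independent rescaling of each block's inner weights $\lambda_{i,a}$ by a positive factor $c_i$, since the inner points of different blocks are decoupled. Choosing $c_i=t_i^2/(d_i^2S_i)$ gives equality of the two sides, and Theorem~\ref{thm:ader_cond} then yields $\dbm(C,\B_2^N)=D$.

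Two degenerate cases need separate care:
\begin{itemize}
\item If $p=2$, then $r=\infty$ and only the indices $i$ with $d_i=\max_j d_j$ have $t_i\neq0$. The blocks with $t_i=0$ are unmatched on the outer side, so I would simply drop their inner contact points, which still leaves a valid decomposition.
\item If $d_i=1$ for some $i$, the inner and outer contact points of $C_i$ may coincide, but the argument above goes through unchanged.
\end{itemize}
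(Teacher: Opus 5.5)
Your strategy is the paper's: the same H\"older argument for the inclusions, the same lifted inner contact points, product-type outer contact points at the H\"older-equality exponents, and symmetry to remove cross terms. The paper pairs each contact point with its negative so that $\sum_i\beta_i z^i=\sum_j\delta_j v^j=0$, whereas you average over $\varepsilon\in\{\pm1\}^k$. You also treat all $k$ and the endpoints $p=2,\infty$ directly, where the paper inducts on $k$ and uses continuity in $p$.

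\textbf{The gap: the outer contact points are mis-normalized.} An outer contact point $z^{i,b}\in\bd(C_i)\cap\bd(d_i\B_2^{n_i})$ has $\|z^{i,b}\|_{C_i}=1$ and $\|z^{i,b}\|_2=d_i$. Hence $z^{i,b}/d_i$ has $C_i$-norm $1/d_i$, not $1$. For your $w$ this gives $\|w\|_C=\|(t_i/d_i)_i\|_p$ and $\|w\|_2=\|t\|_2$. The H\"older equality condition then fails unless all $d_i$ coincide, so $w$ is not even a positive multiple of an outer contact point. For example, take $p=\infty$ and $k=2$. Then $t_1=t_2=1$ and $w=(z^{1,b_1}/d_1,\,z^{2,b_2}/d_2)$, so $\|w\|_2/\|w\|_C=\sqrt{2}\min\{d_1,d_2\}<\sqrt{d_1^2+d_2^2}=D$ whenever $d_1\neq d_2$. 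As written, therefore, you have not verified condition (ii) of Theorem~\ref{thm:ader_cond}. The choice $c_i=t_i^2/(d_i^2S_i)$ inherits the same error.

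\textbf{The repair is to drop the division by $d_i$.} Take blocks $\varepsilon_i t_i z^{i,b_i}$. Block $i$ then has $C_i$-norm $t_i$ and Euclidean norm $t_id_i$, so equality holds both in $\|x^i\|_2\le d_i\|x^i\|_{C_i}$ and in H\"older. This gives $\|w\|_C=\|t\|_p=1$ and, using $\frac{r}{2}-\frac{r}{p}=1$, $\|w\|_2^2=\sum_i t_i^2d_i^2=\|d\|_r^2$. The diagonal blocks of the averaged outer sum become $(t_i^2/S_i)M_i$, so the inner weights should be rescaled by $c_i=t_i^2/S_i$. With this correction your points coincide with the paper's $(\widetilde z^i,\widetilde v^j)$, since $\|d\|_r^{1-r/2}d_i^{2/(p-2)}=d_i^{r/p}\|d\|_r^{-r/p}=t_i$. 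The rest of your argument then goes through: sign averaging, product weights, keeping only indices with $d_i=\max_j d_j$ when $p=2$, and the remark on $d_i=1$.
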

\begin{proof}
Let $\| \cdot\|_{C_i}$ denote the norm on $\R^{n_i}$ whose unit ball is $C_i$.
By assumption,
\begin{equation}
\label{eq:Ci_norm_ineqs}
    \|x^i\|_{C_i}
    \leq \|x^i\|_2
    \leq d_i \, \|x^i\|_{C_i}
\end{equation}
for all $x^i \in \R^{n_i}$.
Let $C = C_1 \oplus_p \ldots \oplus_p C_k$ and let $\| \cdot \|_C$ be the corresponding norm on $\R^{n_1 + \ldots + n_k}$.
By \eqref{eq:Ci_norm_ineqs} and $p \geq 2$, for $x = (x^1, \ldots, x^k) \in \R^{n_1 + \ldots + n_k}$ with $x^i \in \R^{n_i}$ for $i=1, \ldots, k$ we can estimate
\begin{equation}
\label{eq:l2_sum_lower_bound}
    \|x\|_C
    = \left( \sum_{i=1}^{k} \|x^i\|^p_{C_i} \right)^{\frac{1}{p}}
    \leq \left( \sum_{i=1}^{k} \|x^i\|^p_{2} \right)^{\frac{1}{p}}
    \leq \left( \sum_{i=1}^{k} \|x^i\|^2_{2} \right)^{\frac{1}{2}}
    = \|x\|_2.
\end{equation}
For the estimate in the other direction, \eqref{eq:Ci_norm_ineqs} and H\"{o}lder's inequality with exponents $\frac{p}{p-2}$ and $\frac{p}{2}$ (which extends naturally to the cases of $p=2$ and $p=\infty$) yield
\begin{equation}
\begin{split}
\label{eq:l2_sum_upper_bound}
    \|x\|^2_2
    & = \sum_{i=1}^{k} \|x^i\|^2_{2}
    \leq \sum_{i=1}^{k} d_i^2 \, \|x^i\|^2_{C_i}
    \leq \left( \sum_{i=1}^{k} d_i^{\frac{2p}{p-2}} \right)^{\frac{p-2}{p}} \left( \sum_{i=1}^{k} \|x^i\|^{p}_{C_i} \right)^{\frac{2}{p}}
    \\
    & = \|(d_1, \ldots, d_k)\|^2_r \, \|x\|_C^2.
\end{split}
\end{equation}
This establishes \eqref{eq:l2_sum_chain}.

It remains to show that this chain of inclusions is optimal for the Banach--Mazur distance.
By a simple inductive argument, it suffices to consider only the case of $k=2$.
Moreover, by a continuity argument, we can suppose that $p \in (2,\infty)$.
Our goal is to use the characterization of the Banach--Mazur position with respect to the Euclidean ball given in Theorem~\ref{thm:ader_cond}.

Since the positions $\B_2^{n_i} \subseteq C_i \subseteq d_i \B_2^{n_i}$ are optimal for the Banach--Mazur distance for $i=1, 2$, Theorem~\ref{thm:ader_cond} provides integers $N_1, N_2, M_1, M_2 \geq 1$,
four sets of contact points 
$y^1, \ldots, y^{N_1} \in \bd(C_1) \cap \bd(\B_2^{n_1})$,
$z^1, \ldots, z^{M_1} \in \bd(C_1) \cap \bd(d_1 \B_2^{n_1})$,
$u^1, \ldots, u^{N_2} \in \bd(C_2) \cap \bd(\B_2^{n_2})$, and
$v^1, \ldots, v^{M_2} \in \bd(C_2) \cap \bd(d_2 \B_2^{n_2})$,
as well as weights $\alpha_i, \beta_i, \gamma_i, \delta_i > 0$ such that
\begin{equation}
\label{eq:ader}
    \sum_{i=1}^{N_1} \alpha_i y^i (y^i)^T
    = \sum_{i=1}^{M_1} \beta_i z^i (z^i)^T
        \quad \text { and } \quad
    \sum_{i=1}^{N_2} \gamma_i u^i (u^i)^T
    = \sum_{i=1}^{M_2} \delta_i v^i (v^i)^T.
\end{equation}
By pairing each contact point with its negative and normalizing, we may further assume
\begin{equation}
\label{eq:normalized}
    \sum_{i=1}^{N_1}\alpha_i y^i
    = \sum_{i=1}^{M_1} \beta_i z^i
    = \sum_{i=1}^{N_2} \gamma_i u^i
    = \sum_{i=1}^{M_2} \delta_i v^i = 0
        \quad \text{ and } \quad
    d_2^{\frac{4}{p-2}} \sum_{i=1}^{M_1} \beta_i
    = d_1^{\frac{4}{p-2}} \sum_{i=1}^{M_2} \delta_i
    = 1.
\end{equation}
Our goal is to determine contact points of $C = C_1 \oplus_p C_2$ with $\B_2^{n_1+n_2}$ and $\|(d_1, d_2)\|_r \B_2^{n_1+n_2}$
for which an analogous algebraic decomposition exists.

We first note that for each $i=1, \ldots,  N_1$ the point $(y^i, 0) \in \R^{n_1+n_2}$ is a common point of $\bd(C)$ and $\bd(\B_2^{n_1+n_2})$
since such a point satisfies \eqref{eq:l2_sum_lower_bound} with equality.
Similarly, for each $i=1, \ldots, N_2$ the point $(0, u^i) \in \R^{n_1+n_2}$ is a common point of $\bd(C)$ and $\bd(\B_2^{n_1+n_2})$.
On the other hand, the contact points of $C$ and $\|(d_1, d_2)\|_r \B_2^{n_1+n_2}$ include all points of the form 
\[
    (\widetilde{z}^i, \widetilde{v}^j)
    = \frac{\|(d_1, d_2)\|_r}{\sqrt{ d_1^{\frac{2p}{p-2}} + d_2^{\frac{2p}{p-2}} }} \left( d_1^{\frac{2}{p-2}} z^i, d_2^{\frac{2}{p-2}} v^j \right)
    \in \R^{n_1+n_2}
\]
for $i = 1, \ldots, M_1$ and $j = 1, \ldots, M_2$.
Indeed, $\frac{1}{d_1} \|z^i\|_2 = \|z^i\|_{C_1} = 1$ and $\frac{1}{d_2} \|v^j\|_2 = \|v^j\|_{C_2} = 1$
yield that such points achieve equality in the estimate \eqref{eq:l2_sum_upper_bound} by
\begin{align*}
    \left\| \left( d_1^{\frac{2}{p-2}} z^i, d_2^{\frac{2}{p-2}} v^j \right) \right\|_2^2
    & = d_1^{\frac{2p}{p-2}} + d_2^{\frac{2p}{p-2}}
    = \left( d_1^{\frac{2p}{p-2}} + d_2^{\frac{2p}{p-2}} \right)^{\frac{p-2}{p}} \left( d_1^{\frac{2p}{p-2}} + d_2^{\frac{2p}{p-2}} \right)^{\frac{2}{p}}
    \\
    & =\left( d_1^{\frac{2p}{p-2}} + d_2^{\frac{2p}{p-2}} \right)^{\frac{p-2}{p}} \left( \left\|d_1^{\frac{2}{p-2}} z^i \right\|^{p}_{C_1} + \left\| d_2^{\frac{2}{p-2}} v^j \right\|^{p}_{C_2} \right)^{\frac{2}{p}}.
\end{align*}
To the inner contact points $(y^i, 0)$ we assign the weights $\eta_i := \alpha_i$,
to the inner contact points $(0, u^i)$ the weights $\kappa_i := \gamma_i$,
and to the outer contact points $(\widetilde{z}^i, \widetilde{v}^j)$
the weights 
\[
    \omega_{ij}
    := \frac{d_1^{\frac{2p}{p-2}} + d_2^{\frac{2p}{p-2}}}{\| (d_1, d_2) \|_r^2} \, \beta_i \delta_j.
\]
Then,
\[
    \eta_i (y^i, 0) (y^i, 0)^T
    = \alpha_i \left[ \begin{array}{cc}
        y^i (y^i)^T & 0
        \\
        0 & 0
    \end{array} \right]
        \quad \text{ and } \quad
    \kappa_i (0, u^i) (0, u^i)^T
    = \gamma_i \left[ \begin{array}{cc}
        0 & 0
        \\
        0 & u^i(u^i)^T 
    \end{array} \right].
\]
Moreover,
\[
    \omega_{ij} (\widetilde{z}^i, \widetilde{v}^j) (\widetilde{z}^i, \widetilde{v}^j)^T
    = \beta_i \delta_j \left[\begin{array}{cc}
        d_1^{\frac{4}{p-2}} z^i (z^i)^T & (d_1 d_2)^{\frac{2}{p-2}} z^i (v^j)^T
        \\
        (d_1 d_2)^{\frac{2}{p-2}} v^j (z^i)^T & d_2^{\frac{4}{p-2}} v^j (v^j)^T 
    \end{array} \right].
\]
Therefore,
\[
    \sum_{i=1}^{N_1} \eta_i (y^i, 0) (y^i, 0)^T + \sum_{j=1}^{N_2} \kappa_j (0, u^j) (0, u^j)^T
    = \left[\begin{array}{cc}
        \sum_{i=1}^{N_1} \alpha_i y^i (y^i)^T & 0
        \\
        0 & \sum_{j=1}^{N_2} \gamma_j u^j (u^j)^T 
    \end{array} \right],
\]
and similarly
\[
    \sum_{i=1}^{M_1} \sum_{j=1}^{M_2} \omega_{ij} (\widetilde{z}^i, \widetilde{v}^j) (\widetilde{z}^i, \widetilde{v}^j)^T
    = \sum_{i=1}^{M_1} \sum_{j=1}^{M_2} \beta_i \delta_j \left[\begin{array}{cc}
        d_1^{\frac{4}{p-2}} z^i (z^i)^T & (d_1 d_2)^{\frac{2}{p-2}} z^i (v^j)^T
        \\
        (d_1 d_2)^{\frac{2}{p-2}} v^j (z^i)^T & d_2^{\frac{4}{p-2}} v^j (v^j)^T 
    \end{array} \right].
\]
From \eqref{eq:normalized} it follows that
\[
    \sum_{i=1}^{M_1} \sum_{j=1}^{M_2} \beta_i \delta_j z^i (v^j)^T = \left( \sum_{i=1}^{M_1} \beta_i z^i \right) \left( \sum_{j=1}^{M_2} \delta_j v^j \right)^T
    = 0,
\]
and in the same way $\sum_{i=1}^{M_1} \sum_{j=1}^{M_2} \beta_i \delta_j v^j (z^i)^T = 0$.
Moreover, conditions \eqref{eq:ader} and \eqref{eq:normalized} yield
\[
    \sum_{i=1}^{M_1} \sum_{j=1}^{M_2} \beta_i \delta_j d_1^{\frac{4}{p-2}} z^i (z^i)^T
    = \left( d_1^{\frac{4}{p-2}} \sum_{j=1}^{M_2} \delta_j \right) \left( \sum_{i=1}^{M_1} \beta_i z^i (z^i)^T \right)
    = \sum_{i=1}^{N_1} \alpha_i y^i (y^i)^T
\]
and similarly $\sum_{i=1}^{M_1} \sum_{j=1}^{M_2} \beta_i \delta_j d_2^{\frac{4}{p-2}} v^j (v^j)^T =  \sum_{j=1}^{N_2} \gamma_j u^j (u^j)^T$.
This establishes that
\[
    \sum_{i=1}^{N_1} \eta_i (y^i, 0) (y^i, 0)^T + \sum_{j=1}^{N_2} \kappa_j (0, u^j) (0, u^j)^T
    = \sum_{i=1}^{M_1} \sum_{j=1}^{M_2} \omega_{ij} (\widetilde{z}^i, \widetilde{v}^j) (\widetilde{z}^i, \widetilde{v}^j)^T,
\]
which shows that the required decomposition on the contact points exists.
Theorem~\ref{thm:ader_cond} now yields that the chain of inclusions \eqref{eq:l2_sum_chain} is optimal for the Banach--Mazur distance between $C$ and $\B_2^{n_1+n_2}$, that is, $\dbm(C, \B_2^{n_1+n_2}) = \|(d_1, d_2)\|_r$.
This completes the proof.
\end{proof}

With the previous lemma established, the proof of Theorem~\ref{thm:l2_sum} is straightforward.

\begin{proof}[Proof of Theorem~\ref{thm:l2_sum}]
For $p \geq 2$, the assertion follows directly from Lemma~\ref{lem:l2_sum}.
Indeed, for each $i = 1, \ldots, k$ there exists a normed space $X'_i$
isometric to $X_i$ whose unit ball $C_i$ satisfies $\B_2^{n_i} \subseteq C_i \subseteq d_i \B_2^{n_i}$ for $d_i = \dbm(C_i, \B_2^{n_i})$.
Applying Lemma~\ref{lem:l2_sum} to these unit balls, we obtain that the sum $X'_1 \oplus_p \ldots \oplus_p X'_k$ has the required distance to the Euclidean space.
Since the spaces $X'_1 \oplus_p \ldots \oplus_p X'_k$ and $X_1 \oplus_p \ldots \oplus_p X_k$ are clearly isometric, their distances to the Euclidean space are the same.

The case $1 \leq p < 2$ follows easily by duality,
as for any finite-dimensional normed spaces $X, Y$ of the same dimension we have $\dbm(X^*, Y^*) = \dbm(X, Y)$.
In particular, $\dbm(X_i^*, \ell_2^{n_i}) = \dbm(X_i, \ell_2^{n_i})=d_i$ for each $i=1, \ldots k$.
If now $p^* = \frac{p}{p-1} \in (2, \infty]$
denotes the conjugate exponent of $p$, then the duality of $\ell_p$-sums and the first case together yield
\[
    \dbm(X_1 \oplus_p \ldots \oplus_p X_k, \ell^{n_1 + \ldots + n_k}_2)
    = \dbm(X_1^* \oplus_{p*} \ldots \oplus_{p*} X_k^*, \ell^{n_1 + \ldots + n_k}_2)
    = \|(d_1, \ldots, d_k)\|_{r^*},
\]
where
\[
    r^*
    = \frac{2 p^*}{p^*-2}
    = \frac{\frac{2p}{p-1}}{\frac{2-p}{p-1}}
    = \frac{2p}{2-p}
    = r,
\]
as required.
\end{proof}

Theorem~\ref{thm:l2_sum} gives a simple $1$-line proof of the well-known Banach--Mazur distance between $\ell_2^n$ and $\ell_p^n$, so it in particular generalizes \eqref{eq:ntj}, as pointed out in the introduction.
Another obvious consequence is the following.

\begin{corollary}
Let $n_1, \ldots, n_k \geq 1$ be integers and let $X_i$ be an $n_i$-dimensional real normed space with $d_i = \dbm(X_i, \ell_2^{n_i})$ for $i = 1, \ldots, k$.
Define
\[
    f(p)
    = \dbm(X_1 \oplus_p \ldots \oplus_p X_k, \ell_2^{n_1 + \ldots + n_k})
\]
for $p \in [1, \infty]$.
Then
\[
    \max\{d_1, \ldots, d_k \}
    = f(2)
    \leq f(p)
    = f \left( \frac{p}{p-1} \right)
    \leq f(1)
    = f(\infty)
    = \sqrt{d_1^2 + \ldots + d_k^2}.
\]
\end{corollary}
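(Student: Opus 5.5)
The plan is to read everything off the closed formula in Theorem~\ref{thm:l2_sum}. With $d = (d_1, \ldots, d_k)$, that theorem gives
\[
    f(p) = \|d\|_{r(p)}, \qquad r(p) = \frac{2p}{|p-2|}.
\]
Each claimed (in)equality then becomes a statement about how $r(p)$ depends on $p$, combined with the fact that $s \mapsto \|d\|_s$ is non-increasing on $[2,\infty]$.

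First, the endpoint values. At $p=2$ the convention $r=\infty$ gives $f(2) = \|d\|_\infty = \max_i d_i$. At $p=\infty$ the convention $r=2$ gives $f(\infty) = \|d\|_2 = \sqrt{d_1^2 + \ldots + d_k^2}$. At $p=1$ we get $r = \frac{2}{1} = 2$, so $f(1) = \|d\|_2$ as well.

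Second, the symmetry $f(p) = f(p^*)$ with $p^* = \frac{p}{p-1}$. The computation already done in the proof of Theorem~\ref{thm:l2_sum} shows $r(p^*) = r(p)$. The degenerate pairs are covered by the conventions: $p=2$ is its own conjugate, and $p=1$ corresponds to $p^*=\infty$, where both sides have $r=2$.

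Third, the inequalities. By the symmetry it suffices to treat $p \in [2,\infty]$. There $r(p) = \frac{2p}{p-2} = 2 + \frac{4}{p-2}$, which decreases from $\infty$ at $p=2$ to $2$ at $p=\infty$, so $r(p) \in [2,\infty]$. The monotonicity of $\ell_s$-norms, $\|d\|_\infty \le \|d\|_s \le \|d\|_2$ for $s \in [2,\infty]$, then gives
\[
    f(2) \le f(p) \le f(\infty) = f(1).
\]
For $p \in [1,2]$ we pass to $p^* \in [2,\infty]$ and use $f(p) = f(p^*)$.

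There is no real obstacle here. The only point needing care is the bookkeeping at the endpoints $p = 1, 2, \infty$, and the conventions in Theorem~\ref{thm:l2_sum} handle it.
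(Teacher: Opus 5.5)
Your proposal is correct and matches the paper, which states this corollary without proof as an immediate consequence of Theorem~\ref{thm:l2_sum}: the result follows from $r(p) = r\!\left(\frac{p}{p-1}\right) \in [2,\infty]$ together with the fact that $s \mapsto \|(d_1,\ldots,d_k)\|_s$ is non-increasing. As a minor simplification, the reduction to $p \in [2,\infty]$ is unnecessary for the inequalities, since $r(p) \in [2,\infty]$ already holds for every $p \in [1,\infty]$.
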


As a further application of Theorem~\ref{thm:l2_sum}, we obtain that every Hanner polytope achieves the maximal possible Banach--Mazur distance to the Euclidean ball among $n$-dimensional centrally symmetric convex bodies.
Recall that a \cemph{Hanner polytope} is a centrally symmetric polytope that arises by taking arbitrary iterated $\ell_1$- and $\ell_\infty$-sums of segments or other lower-dimensional Hanner polytopes.
These polytopes were introduced by Hanner \cite{hanner} and are particularly well-known for their role in the famous Mahler conjecture,
where they are conjectured to be the only minimizers of the Mahler volume product among centrally symmetric convex bodies.
It was proved by Kim \cite{kim} that every Hanner polytope is indeed a strict local minimizer of this product with respect to the Banach--Mazur distance.
With regard to the Banach--Mazur distance itself, it appears that specialists in the field have long been aware that Hanner polytopes maximize the distance to the Euclidean ball among centrally symmetric convex bodies, where the John Ellipsoid Theorem shows that this maximal distance is at most $\sqrt{n}$.
However, to our knowledge, no proof of this fact for general Hanner polytopes has appeared in the published literature.
The following corollary provides an immediate proof via Theorem~\ref{thm:l2_sum}.

\begin{corollary}
Every Hanner polytope in $\R^n$ has the Banach--Mazur distance $\sqrt{n}$ to the $n$-dimensional Euclidean ball.
\end{corollary}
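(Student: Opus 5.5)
The proof will go by strong induction on the dimension $n$, using Theorem~\ref{thm:l2_sum} with $p \in \{1,\infty\}$, where $r = 2$. The starting observation is that a Hanner polytope in $\R^n$ is either a segment (for $n=1$), or, up to a linear isometry, an $\ell_1$-sum or $\ell_\infty$-sum $H_1 \oplus_p H_2$ of two lower-dimensional Hanner polytopes $H_1 \subseteq \R^{n_1}$ and $H_2 \subseteq \R^{n_2}$ with $n_1 + n_2 = n$, $n_1, n_2 \geq 1$, and $p \in \{1, \infty\}$. Sums of more than two summands can be rewritten as iterated binary sums, or treated directly because Theorem~\ref{thm:l2_sum} allows $k$ summands.

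For the base case $n=1$, every $1$-dimensional symmetric convex body is a segment. A segment is affinely equivalent to $\B_2^1$, so its distance to the Euclidean ball is $1 = \sqrt{1}$.

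For the inductive step, let $H = H_1 \oplus_p H_2$ with $p \in \{1,\infty\}$, viewed as the unit ball of $X_1 \oplus_p X_2$, where $X_i$ is the normed space with unit ball $H_i$. The induction hypothesis gives $\dbm(X_i, \ell_2^{n_i}) = \sqrt{n_i}$. Theorem~\ref{thm:l2_sum} with $r = 2$ then yields
\[
    \dbm(H, \B_2^n)
    = \|(\sqrt{n_1}, \sqrt{n_2})\|_2
    = \sqrt{n_1 + n_2}
    = \sqrt{n}.
\]
The $k$-summand version is identical: $\sqrt{\sum_i n_i} = \sqrt{n}$.

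The only point needing care is the definitional one: the statement of Theorem~\ref{thm:l2_sum} presupposes $0$-symmetric unit balls, and Banach--Mazur distance is affine invariant. We therefore translate each Hanner polytope to be $0$-symmetric and use the fact that the sum construction commutes with linear isomorphisms of the summands. Then $H$ is affinely equivalent to the unit ball of $X_1 \oplus_p X_2$, so its distance to the Euclidean ball equals $\dbm(X_1 \oplus_p X_2, \ell_2^n)$. With that settled, the argument is a direct application of Theorem~\ref{thm:l2_sum}. Optionally, one can remark that John's theorem gives $\sqrt{n}$ as the maximum distance, so Hanner polytopes are extremal.
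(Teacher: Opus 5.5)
Your proof is correct and matches the paper's argument: induction on the dimension, writing the Hanner polytope as an $\ell_1$- or $\ell_\infty$-sum of two lower-dimensional Hanner polytopes, and applying Theorem~\ref{thm:l2_sum} with $r=2$ to get $\sqrt{n_1+n_2}=\sqrt{n}$. Your extra remarks on translating to $0$-symmetric position and on the $k$-summand form are harmless additions that the paper leaves implicit.
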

\begin{proof}
We proceed by induction on the dimension $n \geq 1$. 
There is nothing to prove for $n=1$,
so let us assume $n \geq 2$.
Suppose that $C \subseteq \R^n$ is a Hanner polytope that is the unit ball of the normed space $X = (\R^n, \| \cdot \|_X)$.
By the definition of a Hanner polytope, there exist normed spaces $X_i = (\R^{n_i}, \| \cdot \|_{X_i})$ for $i = 1, 2$ with some Hanner polytopes as their unit balls such that $n_1+n_2=n$ and either $X = X_1 \oplus_1 X_2$ or $X = X_1 \oplus_{\infty} X_2$.
By the inductive assumption and Theorem~\ref{thm:l2_sum}, we have in both cases
that
\[
    \dbm(X, \ell_2^n)
    = \sqrt{\dbm(X_1, \ell_2^{n_1})^2 + \dbm(X_2, \ell_2^{n_2})^2}
    = \sqrt{n_1 + n_2}
    = \sqrt{n}.\qedhere
\]
\end{proof}

Let us point out that Hanner polytopes are not the only maximizers of the Banach--Mazur distance to the Euclidean ball among centrally symmetric convex bodies when $n \geq 4$,
even in the class of polytopes (see \cite[Example~$2.5$]{grundbacherkobos}).

We close this section with the following remark, in which we observe how
Lemma~\ref{lem:l2_sum} extends to the non-symmetric setting.
The $\ell_p$-sum $C_1 \oplus_p \cdots \oplus_p C_k$ of (not necessarily symmetric) convex bodies containing the origin in their interiors is defined analogously to before,
but using the gauge (or Minkowski) functional induced by each $C_i$ instead of a norm.

\begin{remark}
Lemma~\ref{lem:l2_sum} remains valid for general (not necessarily $0$-symmetric) convex bodies $C_i \subseteq \R^{n_i}$ satisfying $\B_2^{n_i} \subseteq C_i \subseteq d_i \B_2^{n_i}$,
where $d_i = \dbm(C_i, \B_2^{n_i})$.
In the non-symmetric case, the characterization of the optimal Banach--Mazur position with respect to the Euclidean ball (the analog of Theorem~\ref{thm:ader_cond})
requires an additional balancing condition (see \cite[Theorem~$1.5$]{grundbacherkobos2}), which in the considered case writes as
\[
    \sum_{i=1}^{N} \lambda_i y^i
    = \sum_{j=1}^{M} \mu_j z^j
    = 0.
\]
In the symmetric case, this vanishing condition can always be imposed by pairing each contact point with its negative and distributing the weights accordingly.
Consequently, in the non-symmetric case the equalities \eqref{eq:normalized} can still be assumed,
and one readily verifies that the analogous balancing condition is also satisfied by the constructed contact points of $C_1 \oplus_p C_2$ with $\B_2^{n_1+n_2}$ and $\|(d_1, d_2)\|_r \B_2^{n_1+n_2}$ in the proof of the lemma.
\end{remark}

\section{The Distance of (Double) Cones to \texorpdfstring{$\ell_1^n$}{l1n} and the Simplex}
\label{sec:cones_to_fixed}

In this section, we prove Theorems~\ref{thm:l1_sum}~and~\ref{thm:simplex}.
While obtaining the exact values of the Banach--Mazur distances as upper bounds is straightforward in both cases,
verifying the matching lower bounds is more involved and relies on appropriately projecting everything onto the bases of the cones.
Clearly, for convex bodies $C_1 \subseteq C_2 \subseteq dC_1 \subseteq \R^n$ and a linear projection $P$,
the inclusions are preserved to $P(C_1) \subseteq P(C_2) \subseteq dP(C_1)$.
Thus, whenever the images $P(C_1)$ and $P(C_2)$ can be controlled, obtaining lower bounds on $\dbm(C_1,C_2)$ from the lower-dimensional images becomes possible.
Controlling the projective images is manageable when the set of extreme points of the original sets is small---as in the case of $\ell_1^n$ (corresponding to the cross-polytope in $\R^n$) or the simplex.
In this way, we show for any Banach--Mazur position between the cones that the projection $P$ can be chosen to give images of the cones that are affinely equivalent to their bases.
A similar argument about controlling projections appears in the proof of the equality $\dbm(\ell_1^3, \ell_{\infty}^3) = \frac{9}{5}$ in \cite{kobosvarivoda}.
Besides this instance, the method seems to be novel for the study of (exact) Banach--Mazur distances.

Throughout this and the next section, we shall repeatedly use the following obvious observations:
For a $d$-dimensional convex body $B \subseteq \R^n$, where $d < n$, all $n$-dimensional cones of the form $C = \conv( B' \cup \{ v^1, \ldots v^{n-d} \} )$ with $B'$ an affine image of $B$ and appropriate vectors $v^1, \ldots, v^{n-d}$ are affinely equivalent.
The analogous statement about double cones $\conv( B' \cup \{ \pm v^1, \ldots, \pm v^{n-d} \} )$ for $0$-symmetric $B'$ holds as well.
In particular, we often assume that $B'$ is contained in the \cemph{linear span} $\lin \{e^1, \ldots, e^d\}$
and $v^1, \ldots, v^{n-d}$ are the remaining vectors from the canonical unit basis.

Next, applying a projection with $1$-dimensional kernel to a $d$-dimensional convex body reduces its dimension by at most $1$.
It does so if and only if the kernel of the projection is parallel to the affine span of the convex body.

We also use the simple lemma below.
It is proved here to avoid repeating the argument.
By $[x,y]$ we denote the closed \cemph{segment} connecting $x,y \in \R^n$.

\begin{lemma}
\label{lem:vertex_absorbing}
Let $B_1, B_2 \subseteq \R^n$ be convex,
let $v \in \R^n$ be a vector,
and let $T : \R^n \to \R^n$ be a linear operator.
If $B_1 \subseteq \conv( B_2 \cup \{v\} )$,
$v \notin B_1$,
and $T(v) \in T(B_1)$,
then
\[
    T(\conv(B_2 \cup \{v\}))
    = T(B_2).
\]
If $B_1$ and $B_2$ are $0$-symmetric, then the assumption $B_1 \subseteq \conv(B_2 \cup \{v\})$ can be weakened to $B_1 \subseteq \conv(B_2 \cup \{\pm v\})$.
\end{lemma}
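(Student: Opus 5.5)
The inclusion $T(B_2) \subseteq T(\conv(B_2 \cup \{v\}))$ is trivial, so the plan is to prove the reverse. By linearity, $T(\conv(B_2 \cup \{v\})) = \conv(T(B_2) \cup \{T(v)\})$. Since $T(B_2)$ is convex, it therefore suffices to show $T(v) \in T(B_2)$.

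By assumption there is $b \in B_1$ with $T(v) = T(b)$. Since $b \in B_1 \subseteq \conv(B_2 \cup \{v\})$, we can write $b = (1-t)c + tv$ with $c \in B_2$ and $t \in [0,1]$. Here I use that the convex hull of a convex set and a point is the union of the segments $[c,v]$ for $c \in B_2$. We must have $t < 1$, since $t = 1$ would give $b = v$, contradicting $v \notin B_1$. Applying $T$ gives
\[
T(v) = T(b) = (1-t)T(c) + tT(v),
\]
so $(1-t)T(v) = (1-t)T(c)$, and hence $T(v) = T(c) \in T(B_2)$. This proves the first claim.

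For the symmetric version, I would use that $B_1$, $B_2$ are $0$-symmetric and $B_1 \subseteq \conv(B_2 \cup \{\pm v\})$. First note that $-v \notin B_1$, by symmetry of $B_1$, and $T(-v) = -T(b) \in T(B_1)$. Write
\[
b = (1-s-t)c + sv - tv', \qquad v' = v,
\]
where $c \in B_2$, $s, t \geq 0$ and $s + t \leq 1$. This uses that $\conv(B_2 \cup \{v, -v\})$ consists of convex combinations of one point of $B_2$, $v$ and $-v$. Applying $T$ and using $T(b) = T(v)$ gives
\[
(1 - s + t)T(v) = (1-s-t)T(c).
\]
The coefficient $1 - s + t$ is positive unless $s = 1$ and $t = 0$, which would mean $b = v$, which is excluded. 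Hence
\[
T(v) = \lambda T(c), \qquad \lambda = \frac{1-s-t}{1-s+t} \in [0,1].
\]
Since $T(B_2)$ is convex and $0$-symmetric, it contains $0$, so $\lambda T(c) \in T(B_2)$. Thus $T(v) \in T(B_2)$, and by symmetry $T(-v) \in T(B_2)$. Consequently
\[
T(\conv(B_2 \cup \{\pm v\})) = T(B_2),
\]
and in particular $T(\conv(B_2 \cup \{v\})) = T(B_2)$. This symmetric case is the only delicate step: the convex-combination bookkeeping has to rule out the degenerate coefficient, and the argument needs $0 \in T(B_2)$, which requires $B_2$ to be nonempty. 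Nonemptiness follows from $B_1 \neq \emptyset$, which is implicit because $T(v) \in T(B_1)$.
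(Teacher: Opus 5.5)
Your proof is correct and is essentially the paper's argument. The first part is identical. In the symmetric case, your single three-term combination $b=(1-s-t)c+(s-t)v$ merges the paper's two subcases $u=(1-\lambda)w\pm\lambda v$: the paper's factor $\frac{1-\lambda}{1+\lambda}$ is your $\lambda$ with $s=0$, and $t=0$ gives its first subcase. This also spares you the reduction to those two forms, which the paper calls straightforward.

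One correction to your closing remark: in the symmetric case, $B_2\neq\emptyset$ does \emph{not} follow from $B_1\neq\emptyset$. Take $B_2=\emptyset$, $B_1=\{0\}$, $v\neq 0$ and $T=0$; these satisfy all the hypotheses, yet $T(\conv(B_2\cup\{v\}))=\{0\}\neq T(B_2)$. So nonemptiness of $B_2$ is a tacit extra hypothesis rather than a consequence. The paper uses it too when it picks $w\in B_2$, and it holds in every application.
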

\begin{proof}
For any set $A \subseteq \R^n$, it is clear that
\[
    T(\conv(B_2 \cup A))
    = \conv( T(B_2) \cup T(A) ),
\]
so it suffices to prove $T(v) \in T(B_2)$.
Now, choose any $u \in B_1$ with $T(v) = T(u)$.
If $B_1 \subseteq \conv(B_2 \cup \{v\})$,
there exist some $w \in B_2$ and $\lambda \in [0,1)$ such that
\begin{equation}
\label{eq:u_conv}
    u
    = (1-\lambda) w + \lambda v.
\end{equation}
Then
\[
    T(w)
    = T \left( \frac{u - \lambda v}{1-\lambda} \right)
    = \frac{T(u) - \lambda T(v)}{1-\lambda}
    = \frac{T(v) - \lambda T(v)}{1-\lambda}
    = T(v)
\]
verifies $T(v) \in T(B_2)$.

If $B_1$ and $B_2$ are $0$-symmetric and $B_1 \subseteq \conv(B_2 \cup \{\pm v\})$, it is straightforward to see that $u$ can still be given as in \eqref{eq:u_conv}, but possibly using $-v$ instead of $v$.
If it uses $v$, then $T(v) \in T(B_2)$ follows exactly like before.
Otherwise,
\[
    T \left( \frac{1-\lambda}{1+\lambda} w \right)
    = \frac{1-\lambda}{1+\lambda} T \left( \frac{u + \lambda v}{1-\lambda} \right)
    = \frac{T(u) + \lambda T(v)}{1+\lambda}
    = \frac{T(v) + \lambda T(v)}{1+\lambda}
    = T(v)
\]
and $\frac{1-\lambda}{1+\lambda} w \in [0,w] \subseteq B_2$ complete the proof.
\end{proof}

\begin{proof}[Proof of Theorem~\ref{thm:l1_sum}]
Since $(X \oplus_1 Y)^* = X^* \oplus_\infty Y^*$ holds for all finite-dimensional real normed spaces $X$ and $Y$,
it suffices to prove the result for the $\ell_1$-sum.
Moreover, by induction it is enough to show that for any $n$-dimensional real normed space $X = (\R^n, \| \cdot \|_X)$ with $d^* := \dbm(X, \ell_1^n) \leq 3$, we have
\[
    d
    := \dbm (X \oplus_1 \R, \ell_1^{n+1})
    = d^*.
\]

We identify the underlying vector space of $X \oplus_1 \R$ with $\R^{n+1}$ in the standard way, i.e.,
the vector space of $X$ is identified with the subspace $Y \subseteq \R^{n+1}$ given by $x_{n+1} = 0$.
To prove the upper bound $d \leq d^*$, we assume that $\|\cdot\|$ is a norm in $\R^n$ that is isometric to $\|\cdot\|_X$ and satisfies
\[
    \|x\|_1
    \leq \|x\|
    \leq d^* \|x\|_1
\]
for all $x \in \R^n$.
Then also
\[
    \|x\|_1 + |x_{n+1}|
    \leq \|x\| + |x_{n+1}|
    \leq d^* \|x\|_1 + |x_{n+1}|
    \leq d^*(\|x\|_1 + |x_{n+1}|)
\]
for all $x \in \R^n$ and all $x_{n+1} \in \R$.
It remains to establish the lower bound $d \geq d^*$.

Let $K = \conv \{ \pm v^1, \ldots, \pm v^{n+1} \} \subseteq \R^{n+1}$ be a linear image of the cross-polytope in $\R^{n+1}$ satisfying
\begin{equation}
\label{eq:incl_l1_sum}
    C
    \subseteq
    K \subseteq dC,
\end{equation}
where $C = \conv( B \cup \{ \pm e^{n+1} \} ) \subseteq \R^{n+1}$ is the unit ball of the space $X \oplus_1 \R$ for $B \subseteq Y$ the unit ball of $X$ embedded in $\R^{n+1}$.
Since $e^{n+1} \in C \subseteq K$, some vertex $v^i$ of $K$ satisfies $|v^i_{n+1}| \geq 1$.
We may suppose that $v^{n+1}_{n+1} \geq 1$.
For $x \in \R^{n+1}$, let $\pi_Y(x)=(x_1, \ldots, x_n, 0)$ denote its orthogonal projection onto $Y$.
Then
\[
    3
    \geq d^*
    \geq d
    \geq \|\pi_Y(v^{n+1})\|_X + |v^{n+1}_{n+1}|
    \geq \|\pi_Y(v^{n+1})\|_X + 1,
\]
so that $r := \|\pi_Y(v^{n+1})\|_X \leq 2$.
Next, we define a vector $w \in \R^{n+1}$ as
\[
    w
    := \begin{cases}
        \displaystyle
        e^{n+1} +\frac{1}{r \, v^{n+1}_{n+1}}\pi_Y(v^{n+1}),
            & \text{if } r > 0, \\
        e^{n+1},
            & \text{if } r = 0.
    \end{cases}
\]
Now, let us take a linear projection $P : \R^n \to Y$ given for $x \in \R^{n+1}$ as $P(x) = x - x_{n+1} w$.
Since $v^{n+1}_{n+1} \geq 1$,
we have $\|P(e^{n+1})\|_X = \|-\pi_Y(w)\|_X  \leq 1$
and, thus, $\pm P(e^{n+1}) \in B$.
It follows that
\[
    P(C)
    = \conv( P(B) \cup \{ \pm P(e^{n+1}) \} )
    = B.
\]
Applying $P$ to the inclusions \eqref{eq:incl_l1_sum} gives
\begin{equation}
\label{eq:double_cone_last_chain}
    B
    = P(C)
    \subseteq P(K)
    \subseteq d P(C)
    = d B.
\end{equation}
Moreover, if $r = 0$, then 
\[
    P(v^{n+1})
    = v^{n+1} - v^{n+1}_{n+1} e^{n+1}
    = \pi_Y(v^{n+1})
    = 0
    \in B.
\]
If instead $r \in (0, 2]$, then
\[
    P(v^{n+1})
    = v^{n+1} - v^{n+1}_{n+1} e^{n+1} - \frac{v^{n+1}_{n+1}}{r \, v^{n+1}_{n+1}} \pi_Y(v^{n+1})
    = \left( 1 - \frac{1}{r} \right) \pi_Y(v^{n+1}),
\]
so we have $\| P(v^{n+1}) \|_X = \left| 1 - \frac{1}{r} \right| \cdot r = |r - 1| \in [0,1]$.
In particular, we obtain $P(v^{n+1}) \in B$ in this case as well.
Since $P(B) = B \subseteq K$ and $v^{n+1} \notin B$, Lemma~\ref{lem:vertex_absorbing} shows that
\[
    P(K)
    = P(\conv\{\pm v^1, \ldots, \pm v^n \})
    = \conv \{ \pm P(v^1), \ldots, \pm P(v^n) \}
\]
is a centrally symmetric polytope with at most $2n$ vertices in the $n$-dimensional subspace $Y$.
As a projection of an $(n+1)$-dimensional convex body, it is $n$-dimensional and therefore a non-degenerate linear image of the $n$-dimensional cross-polytope.
Finally, 
\eqref{eq:double_cone_last_chain} and the definition of $d^*$ yield $d \geq d^*$,
finishing the proof.
\end{proof}

Unlike the symmetric case, the non-symmetric case has no a priori guarantee for any specific centre of homothety in an optimal Banach--Mazur position.
Thus, when considering the Banach--Mazur distance between single cones and the simplex, the position of the simplex is less constrained compared to the cross-polytope in the symmetric case.
The following lemma extends the projective argument to handle this additional degree of freedom.

\begin{lemma}
\label{lem:proj}
Let $B \subseteq \R^n$ be an $(n-1)$-dimensional convex body containing $0$
and contained in the subspace $Y = \{x \in \R^n: x_n = 0\}$.
Let $d \in [1, 2]$ and $u \in \R^n$ be such that the cone
\[
    C
    = \conv((B+u) \cup \{e^n+u\})
\]
contains $0$.
Then, for every vector $v \in dC$ with $v_n \geq u_n+1$,
there exists a linear projection $P : \R^n \to Y$ such that 
\[
    P(e^n) \in B
        \quad \text{and} \quad
    P(v-u) \in B.
\]
\end{lemma}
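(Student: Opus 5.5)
The plan is to look for $P$ as a projection along a direction $w$ with $w_n = 1$, namely $P(x) = x - x_n w$, which maps $\R^n$ onto $Y$. Writing $-b := \pi_Y(w) \in Y$, where $\pi_Y$ is the orthogonal projection onto $Y$, we get $P(e^n) = -\pi_Y(w) = b$. So the first requirement $P(e^n) \in B$ just says $b \in B$. Put $t := v_n - u_n \geq 1$ and $a := \pi_Y(v-u)$. Then $P(v-u) = a + t b$. The lemma is therefore equivalent to
\[
    a \in B - tB = \{ b_1 - t b_2 : b_1, b_2 \in B \}.
\]
Given such a representation $a = b_1 - t b_2$, we choose $b := b_2$, i.e.\ $w := e^n - b_2$.

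The next step is to parametrize the data using the cone structure. Every point of $C$ has the form $u + (1-\lambda) b' + \lambda e^n$ with $b' \in B$ and $\lambda \in [0,1]$.

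From $0 \in C$ we obtain $0 = u + (1-\mu) b'' + \mu e^n$ for some $b'' \in B$ and $\mu \in [0,1]$. This gives
\[
    u_n = -\mu \quad \text{and} \quad \pi_Y(u) = -(1-\mu) b''.
\]

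From $v \in dC$ we write $v = d\bigl(u + (1-\lambda) b' + \lambda e^n\bigr)$. Substituting the expression for $u$ gives
\[
    t = d\lambda - (d-1)\mu \quad \text{and} \quad a = d(1-\lambda) b' - (d-1)(1-\mu) b''.
\]

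It remains to check the coefficients, and this is the only place where the hypotheses $t \geq 1$, $d \leq 2$ and $0 \in B$ enter. The assumption $t \geq 1$ means $d\lambda \geq 1 + (d-1)\mu$. Hence
\[
    0 \leq d(1-\lambda) \leq (d-1)(1-\mu) \leq d - 1 \leq 1.
\]
Since $0 \in B$ and $B$ is convex, it follows that $b_1 := d(1-\lambda) b' \in B$.

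Similarly, $0 \leq (d-1)(1-\mu) \leq 1 \leq t$ gives $(d-1)(1-\mu) b'' = t b_2$ for $b_2 := \frac{(d-1)(1-\mu)}{t} b'' \in B$. Thus $a = b_1 - t b_2 \in B - tB$, as required.

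I expect no serious obstacle beyond this bookkeeping. The key point is to eliminate $u$ through the condition $0 \in C$, so that both coefficients come out at most $1$ (respectively at most $t$); that is exactly where $d \leq 2$ is needed.
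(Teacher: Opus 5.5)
Your proof is correct and is essentially the paper's argument. Both use a projection $P(x) = x - x_n w$ with $w = e^n + c\,\pi_Y(u)$, use $0 \in C$ to get $-\pi_Y(u) \in B$, and use the cone structure of $dC$ at height $v_n$ to control $\pi_Y(v-u)$. The paper takes $c = 1/(v_n-u_n)$, so that $P(v-u) = \pi_Y(v-u)-\pi_Y(u)$, which in your notation is $d(1-\lambda)b' + (2-d)(1-\mu)b''$; your choice $c = (d-1)/(v_n-u_n)$ gives $P(v-u) = d(1-\lambda)b'$ directly. Both verifications rest on $0 \in B$ and $1 \le d \le 2$ in the same way.
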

\begin{proof}
The assumption $0 \in C$ implies $u_n \in [-1, 0]$.
Let $\pi_Y : \R^n \to Y$ be the orthogonal projection onto the subspace $Y$.
Since $0 \in B \cap C$,
we have $0 \in \pi_Y(C) = B + \pi_Y(u)$ and therefore
\begin{equation}
\label{eq:u_orth_proj}
    -\pi_Y(u)
    \in B.
\end{equation}

The point $v \in dC$ lies in the section of the cone $dC$ by the affine hyperplane given by $x_n = v_n$.
This section is easily verified to equal
\[
    \lambda d B + (1-\lambda) d e^n + d u,
        \quad \text{where }
    \lambda
    := \frac{d-v_n+du_n}{d} \in [0, 1].
\]
Applying $\pi_Y$ yields
\[
    \pi_Y(v)
    \in \lambda d B + d \pi_Y(u)
    = (d-v_n+du_n) B + d \pi_Y(u).
\]
Hence, by $d + d u_n \geq v_n \geq u_n+1$, $u_n \in [-1, 0]$, $d \in [1, 2]$, $0 \in B$, and \eqref{eq:u_orth_proj},
we conclude that
\begin{equation}
\begin{split}
\label{eq:vu_orth_proj}
    \pi_Y(v-u)
    & \in (d-v_n+du_n)B + (d-1)\pi_Y(u)
    \subseteq (d-u_n-1+du_n) B + (d-1)\pi_Y(u)
    \\
    & = (d-1)((u_n+1)B + \pi_Y(u))
    \subseteq B + \pi_Y(u).
\end{split}
\end{equation}

Now, we define the linear projection $P : \R^n \to Y$ as $P(x) = x - x_n w$,
where
\[
    w
    := e^n + \frac{1}{v_n - u_n} \pi_Y(u).
\]
From $v_n - u_n \geq 1$, $0 \in B$, and \eqref{eq:u_orth_proj}, we obtain $P(e^n) = - \frac{1}{v_n - u_n} \pi_Y(u) \in [0,-\pi_Y(u)] \subseteq B$.
Moreover, from \eqref{eq:vu_orth_proj} it follows that
\[
    P(v-u)
    = v-u - (v_n-u_n) w
    = v-u - (v_n-u_n) e^n - \pi_Y(u)
    = \pi_Y(v-u) - \pi_Y(u)
    \in B.
\]
With both desired properties of $P$ verified, the proof is finished.
\end{proof}

\begin{proof}[Proof of Theorem~\ref{thm:simplex}]
By induction, it suffices to prove the case $m=1$.
Let $d^* = \dbm(B, \Delta^n)$ and $d = \dbm(C, \Delta^{n+1})$.
Further, let $Y \subseteq \R^{n+1}$ be the subspace given by $x_{n+1} = 0$.
To prove the upper bound $d \leq d^*$,
we may assume that $B_0 \subseteq Y$, $\Delta^n_0 \subseteq Y$ are affine copies of $B$ and $\Delta^n$, respectively,
with $0 \in B_0$ and $B_0 \subseteq \Delta^n_0 \subseteq d^* B_0$.
Then we have
\[
    C_0
    := \conv(B_0 \cup \{e^{n+1}\})
    \subseteq \conv(\Delta^n_0 \cup \{e^{n+1}\})
    =: \Delta^{n+1}_0.
\]
Moreover, $0 \in d^*B_0$ shows that also
\[
    \Delta^{n+1}_0
    = \conv(\Delta^n_0 \cup \{e^{n+1}\})
    \subseteq \conv(d^* B_0 \cup \{d^*  e^{n+1}\})
    = d^* C_0.
\]
Since the convex bodies $C_0$, $\Delta^{n+1}_0$ are affinely equivalent to $C$ and $\Delta^{n+1}$, respectively,
it follows that $d \leq d^*$.
It remains to prove the lower bound $d \geq d^*$.

For proving the lower bound, we may assume that $0 \in B \subseteq Y$,
$v^1 = e^{n+1}$,
and there exists a vector $u \in \R^{n+1}$ such that
$C = \conv((B+u) \cup\{e^{n+1}+u\})$ satisfies
\begin{equation}
\label{eq:incl_simplex}
	0
	\in C
	\subseteq K
	\subseteq dC,
\end{equation}
where $K = \conv \{w^1, \ldots, w^{n+2}\} \subseteq \R^{n+1}$ is a simplex.
The inequality $d \leq d^*$ from the previous part together with the assumption $d^* \leq 2$ yields $d \leq 2$.
Since $e^{n+1} + u \in C \subseteq K$,
some vertex $w^i$ of the simplex $K$, say $w^i = w^{n+2}$, satisfies $w^i_{n+1} \geq u_n + 1$.
By Lemma~\ref{lem:proj}, there exists a linear projection $P : \R^{n+1} \to Y$
with $P(e^{n+1}) \in B$ and $P(w^{n+2}) \in B + P(u)$.
In particular,
\[
    P(C)
    = \conv( (P(B) + P(u)) \cup \{P(e^{n+1}) + P(u)\} )
    = B + P(u).
\]
Applying $P$ to \eqref{eq:incl_simplex} yields
\begin{equation}
\label{eq:single_cone_last_chain}
    B + P(u)
    = P(C)
    \subseteq P(K)
    \subseteq d P(C)
    = d B + d P(u).
\end{equation}
By $B+u \subseteq K$, $w^{n+2} \notin B+u$, and $P(w^{n+2}) \in B + P(u) = P(B+u)$,
Lemma~\ref{lem:vertex_absorbing} shows that
\[
    P(K)
    = P(\conv\{ w^1, \ldots, w^{n+1} \})
    = \conv\{ P(w^1), \ldots, P(w^{n+1}) \}
\]
is a convex hull of $n+1$ points in the $n$-dimensional space $Y$.
It is a projection of an $(n+1)$-dimensional set,
so $P(K)$ is an $n$-dimensional simplex.
Hence, \eqref{eq:single_cone_last_chain} and $\dbm(B,\Delta^n) = d^*$ yield $d \geq d^*$, as required.
\end{proof}

\section{The Distance of (Double) Cones With Arbitrary Planar Symmetric Bases}
\label{sec:between_cones}

In this section, we further develop the projective method to study the Banach--Mazur distances between single and double cones with arbitrary planar symmetric bases.
Establishing the matching lower bounds on the distances again relies on finding linear projections in any Banach--Mazur position that simultaneously map both cones to sets affinely equivalent to their bases
or similarly easily handled.
When dealing with arbitrary cones, controlling the projective images is more involved compared to the previous section.
Our approach therefore requires a more detailed geometric analysis and, in some places, appropriate assumptions.

Our first goal is to verify Theorem~\ref{thm:3d_cones},
which states that the Banach--Mazur distance between two single cones in $\R^3$ with planar symmetric bases is equal to the distance between their bases.
The required $2$-dimensional analysis is carried out in the following lemma.

\begin{lemma}
\label{lem:triangles}
Let $y \in \R^2$ be such that $0$ lies in the triangle $T \subseteq \R^2$ with vertices $y - e^1$, $y + e^1$, and $y + e^2$.
Let $d \in \left[1, \frac{3}{2}\right]$ and let $S \subseteq \R^2$ be a triangle such that
\[
    T
    \subseteq S
    \subseteq d T,
\]
with the vertices $v, v', v'' \in \R^2$ of $S$ satisfying $v \in [d(y-e^1), dy]$ and $v_2' \geq v_2''$.
Then at least one of the following conditions holds:
\begin{enumerate}[(i)]
\item There exists $\mu \in \left[ 1, \frac{4}{3} \right)$ such that $v'_2 + \mu (v_2'' - v_2') = v_2$.
\item There exists $\mu \in \left[ \frac{1}{2}, \frac{4}{3} \right)$ such that the line through $y + e^2$ parallel to the line through $v'$ and $v''$ intersects the line through $y - e^1$ and $y + e^1$ at a point $z$ with $z_1 = y_1 + 2\mu - 1$.
\end{enumerate}
\end{lemma}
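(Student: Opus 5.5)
The plan is to reduce the statement to a finite system of linear inequalities, using the two inclusions $T \subseteq S$ and $S \subseteq dT$ to locate the three vertices of $S$. I would then treat the failure of (i) as the main case and show that it forces (ii).

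\emph{Normalisation.} The condition $0 \in T$ gives $y_2 \in [-1,0]$ and $|y_1| \leq 1 + y_2$. The bottom edge of $dT$ lies on the line $x_2 = d y_2$, and $v$ lies on that edge, so $v_2 = d y_2$. Since $S \subseteq dT$, every vertex of $S$ has second coordinate at least $d y_2$. Together with $v_2' \geq v_2''$ this gives
\[
  v_2 \leq v_2'' \leq v_2' .
\]
Since $y + e^2 \in T \subseteq S$, we also get $v_2' \geq y_2 + 1$, while $v_2' \leq d(y_2+1)$ because the apex of $dT$ is $d(y+e^2)$.

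\emph{Reformulating (i) and (ii).} If $v_2' > v_2''$, then (i) holds with
\[
  \mu = \frac{v_2' - v_2}{v_2' - v_2''} \geq 1 ,
\]
so (i) fails exactly when $3(v_2'' - v_2) \geq v_2' - v_2''$, meaning $v''$ sits comparatively high. A short computation of the line through $y+e^2$ parallel to $[v',v'']$ shows that (ii) amounts to
\[
  \frac{v_1'' - v_1'}{v_2' - v_2''} \in \left[0, \tfrac{5}{3}\right).
\]
In words, (ii) says the edge $[v', v'']$ descends to the right, with a bounded ratio of horizontal to vertical run.

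\emph{Degenerate case $v_2' = v_2''$.} I would first rule this out. The edge $[v', v'']$ would then be horizontal at height at least $y_2 + 1$. The corners $y \pm e^1$ at height $y_2$ would have to lie in the triangle spanned by this edge and $v$, and $v$ lies on the left part $[d(y-e^1), dy]$ of the bottom edge of $dT$. Comparing the horizontal width of $S$ at height $y_2$ with the width available inside $dT$ at the heights of $v'$ and $v''$ should give a contradiction whenever $d \leq \tfrac{3}{2}$.

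\emph{Main case: (i) fails.} Now $v''$ lies well above the bottom line. I would establish the two inequalities needed for (ii).

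\begin{enumerate}[(a)]
\item \emph{The sign condition $v_1'' \geq v_1'$.} The point $y + e^1$ must lie in $S$, but $v$ is on the left half of the base of $dT$. So $y+e^1$ has to be covered by the edge $[v, v'']$ or $[v', v'']$. If $v''$ were left of $v'$, then $y+e^1$ would be separated from $S$ by the line through $v$ and the rightmost vertex, which I expect to contradict the fact that this vertex lies inside $dT$.
\item \emph{The upper bound $5/3$.} I would write the constraint that $y + e^1$ lies on the correct side of the line through $v$ and $v''$. I would combine it with $v'' \in dT$ (right edge: $x_2 - d y_2 \leq \ldots$) and with $y + e^2$ lying on the correct side of the line through $v'$ and $v''$. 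Eliminating $v'$ and $v''$ using $v_2'' - v_2 \geq \tfrac{1}{3}(v_2' - v_2'')$ and $d \leq \tfrac{3}{2}$ should bound the slope ratio by $5/3$.
\end{enumerate}

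The main obstacle is this last elimination. The inequalities involve the free parameters $y_1, y_2, d$ and four vertex coordinates, and it is not obvious which of the constraints are active at the extremal configuration. I would first try to guess that configuration: $d = \tfrac{3}{2}$, $v$ at $d(y - e^1)$, $v''$ on the right edge of $dT$, and $y + e^1$ and $y + e^2$ on edges of $S$. I would check that the thresholds $4/3$ and $5/3$ are attained there, and then argue by monotonicity in each parameter that other configurations only improve the bound. I am not certain this monotonicity holds in every parameter. If it fails, I would split into subcases according to which edge of $S$ contains the apex $y+e^2$.
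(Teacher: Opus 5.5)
Your reformulations are correct: (i) fails exactly when $3(v_2''-v_2)\geq v_2'-v_2''$, and (ii) is equivalent to $\rho:=\frac{v_1''-v_1'}{v_2'-v_2''}\in[0,\frac53)$. Your width comparison does exclude the degenerate case. Step (a) is right in substance. The clean reason is that any point of $dT$ at height at least $y_2+1$ has first coordinate in $[y_1-1,y_1+1]$ when $d\leq\frac32$. Together with $v_1\leq dy_1<y_1+1$ and $y\pm e^1\in S$, this forces $v_1''\geq y_1+1\geq v_1'$ and also $v_1\leq y_1-1$.

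The gap is (b). It is the entire quantitative content of the lemma, and you leave it at ``should bound the slope ratio''. The constraints you list there are the right ingredients, but you use the hypothesis backwards. Because $y+e^2$ lies on or below the line through $v'$ and $v''$, the natural estimate is $\rho\leq\frac{v_1''-y_1}{y_2+1-v_2''}$. So bounding $\rho$ from above needs an \emph{upper} bound on $v_2''$. The failure of (i) gives only a \emph{lower} bound on $v_2''$ (equivalently, an upper bound on the denominator $v_2'-v_2''$), which is of the wrong sign. It can help only indirectly, by forcing $y_2$ to be sufficiently negative, and that requires an estimate you have not identified.

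The missing idea is an absolute height bound for $v''$ that does not use (i). You have $v_2=dy_2$ and $v_1\leq y_1-1$, and $y+e^1$ lies on or below the line through $v$ and $v''$. Hence $v''$ lies on or below the line through $(y_1-1,dy_2)$ and $y+e^1$. Evaluating at $v_1''\leq d(y_1+1)$ gives $v_2''\leq y_2\bigl(1-\frac{(d-1)^2(y_1+1)}{2}\bigr)<\frac34y_2$ whenever $y_2<0$. For $y_2=0$ one gets $v_2''=v_2$, so (i) holds with $\mu=1$.

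The case split should then be on $y_2$, not on (i):
\begin{itemize}
\item If $y_2\geq-\frac25$, the height bound and $v_2'\geq y_2+1$ give $\frac43v_2''-\frac13v_2'<\frac23y_2-\frac13\leq\frac32y_2\leq v_2$, so (i) holds.
\item If $y_2<-\frac25$, the line through $v'$ and $v''$ is at least as steep as the line from $y+e^2$ to $v''$. Since $v''\in\frac32T$ lies below height $\frac34y_2$, that line is in turn steeper than the line from $y+e^2$ to $w=(\frac32(y_1+1)+\frac34y_2,\frac34y_2)$. This yields $\rho<\frac{2y_1+3y_2+6}{y_2+4}\leq\frac{5y_2+8}{y_2+4}<\frac53$.
\end{itemize}

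Your fallback of finding an extremal configuration where $\frac43$ and $\frac53$ are attained also rests on a false premise, because the constants are not sharp. By the height bound, failure of (i) already forces $y_2\leq\frac{-5+\sqrt{17}}{2}\approx-0.44$ for every $d\leq\frac32$, and there the estimate above gives $\rho<1.64$. So no configuration in your main case comes near the threshold $\frac53$, and your monotonicity argument has no extremal point to anchor to.
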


\noindent See Figure~\ref{fig:triangles} for an example of the situation in the lemma.
We note that the point $z$ does not need to lie in the triangle $d T$.

\begin{proof}
By $T \subseteq S$ and $v'_2 \geq v''_2 \geq v_2$,
we have $v'_2 \geq y_2+1$.
Moreover, $0 \in T$ yields $y_1 \in [-1,1]$ and $y_2 \in [-1,0]$.
Thus, $v' \in dT$ and $d \in [1, \frac{3}{2}]$ imply
\[
    v'_1
    \geq v'_1 - v'_2 + y_2+1
    \geq d (y_1 - y_2 - 1) + y_2 + 1
    = y_1 + (d-1) (y_1 - y_2 - 1)
    \geq y_1 - 1.
\]
Similarly, we can show that $v'_1 \leq y_1 + 1$.
Again using $T \subseteq S$ now gives $v''_1 \geq y_1+1$ and $v_1 \leq y_1 - 1$.
We also see for $y_2 = 0$ that $v_2 = v''_2 = 0$ necessarily holds, in which case condition (i) is satisfied.
Thus, we may assume $y_2 < 0$ and in particular $y_1 < 1$ from now on.

To estimate $v''_2$, we note that the line through $v$ and $v''$ passes under $y+e^1$.
With $v \in [d (y-e^1), dy]$ and $v_1 \leq y_1-1$,
this means that $v''$ does not lie above the line $L$ through $(y_1-1, dy_2)$ and $y+e^1$.
The line $L$ meets the line given by $x_1 = d(y_1+1)$ in
\[
    \left( d(y_1+1), y_2 + \frac{y_2 - d y_2}{2} (d (y_1+1) - (y_1+1)) \right)
    = \left( d(y_1+1), y_2 \left( 1 - \frac{(d-1)^2 (y_1+1)}{2} \right) \right).
\]
Since $v''$ does not lie above $L$,
$v''_1 \leq d (y_1+1)$ by $v'' \in dT$,
$d \in [1,\frac{3}{2}]$,
$y_1 \in [-1,1)$,
and $y_2 < 0$,
this means
\[
    v''_2
    \leq y_2 \left( 1 - \frac{(d-1)^2 (y_1+1)}{2} \right)
    \leq y_2 \left( 1 - \frac{y_1+1}{8} \right)
    < \frac{3}{4} y_2.
\]
If now $y_2 \geq - \frac{2}{5}$, then
\[
    v'_2 + \frac{4}{3} (v''_2 - v'_2)
    = \frac{4}{3} v''_2 - \frac{1}{3} v'_2
    < y_2 - \frac{1}{3} (y_2+1)
    = \frac{2}{3} y_2 - \frac{1}{3}
    \leq \frac{3}{2} y_2
    \leq d y_2
    = v_2.
\]
Hence, condition (i) is satisfied in this case, so we may assume $y_2 < - \frac{2}{5}$ from now on.

Next, let $w = (\frac{3}{2} (y_1+1) + \frac{3}{4} y_2, \frac{3}{4} y_2) \in [\frac{3}{2} (y+e^1), \frac{3}{2} (y+e^2)]$.
Since the line through $v'$ and $v''$ passes over $y+e^2$,
it has at most the same slope as the line through $y+e^2$ and $v''$.
Moreover, $y+e^2, v'' \in d T \subseteq \frac{3}{2} T$ and $v''_2 < \frac{3}{4} y_2$
show that the line through $y+e^2$ and $v''$ has a smaller slope than the line $L'$ through $y+e^2$ and $w$.
In particular, $z$ lies strictly to the left of the intersection point of $L'$ with the line through $y-e^1$ and $y+e^1$.
The latter intersection point can be computed to be
\[
    \left( y_1 + \frac{2 y_1 + 3 y_2 + 6}{y_2 + 4}, y_2 \right).
\]
Since $0 \in T$ implies $-y_1 + y_2 + 1 \geq 0$, and $y_2 \in [-1, - \frac{2}{5})$ by assumption, we obtain
\[
    z_1 - y_1
    < \frac{2 y_1 + 3 y_2 + 6}{y_2 + 4}
    \leq \frac{5 y_2 + 8}{y_2 + 4}
    < \frac{5}{3}.
\]
Thus, the unique $\mu \in \R$ with $z_1 = y_1 + 2 \mu - 1$ satisfies $\mu < \frac{4}{3}$.
Moreover, $v'_1 \leq y_1+1 \leq v''_1$ and $v'_2 \geq v''_2$ show that $z$ does not lie to the left of $y$, i.e., $\mu \geq \frac{1}{2}$.
This completes the proof.
\end{proof}

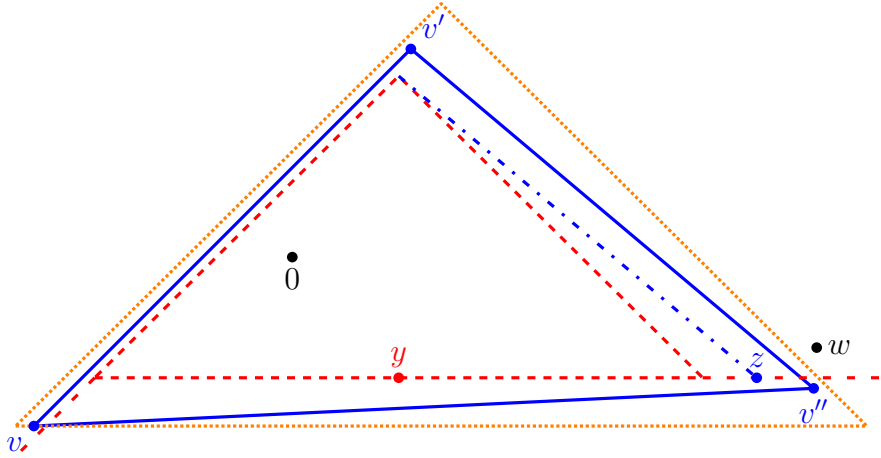
\begin{figure}[t]
\centering
\def\d{1.4}
\def\yx{0.35}
\def\yy{-0.4}
\def\vx{(\d*(\yx-1)+0.06)}
\def\vy{(\d*\yy)}
\def\vpx{(\d*\yx-0.1)}
\def\vpy{(\d*(\yy+1)-0.15)}
\def\vppx{(\d*(\yx+1)-0.175)}
\def\vppy{(\d*\yy+0.125)}
\def\zx{(\yx - (\vppx-\vpx)/(\vppy-\vpy))} 
\def\scale{4}
\begin{tikzpicture}[line join=round, scale=\scale]
\draw[very thick, blue] ({\vx},{\vy}) -- ({\vpx},{\vpy}) -- ({\vppx},{\vppy}) -- cycle;

\draw[very thick, red, dashed] (\yx-1,\yy) -- (\yx+1,\yy) -- (\yx,\yy+1) -- cycle;

\draw[very thick, orange, densely dotted]
  ({\d*(\yx-1)},{\d*\yy}) -- ({\d*(\yx+1)},{\d*\yy}) -- ({\d*\yx},{\d*(\yy+1)}) -- cycle;

\draw[very thick, blue, loosely dashdotted] ({\yx},{\yy+1}) -- ({\zx},{\yy});

\draw[very thick, red, loosely dashed] (\yx-1,\yy) -- (\yx-1.25,\yy+1-1.25);
\draw[very thick, red, loosely dashed] (\yx+1,\yy) -- (\yx+1.6,\yy);

\fill[red] (\yx,\yy) circle(2/\scale pt) node[anchor=south] {$y$};
\fill[blue] ({\zx},\yy) circle(2/\scale pt) node[anchor=south] {$z$};
\fill[blue] ({\vx},{\vy}) circle(2/\scale pt) node[anchor=north east] {$v$};
\fill[blue] ({\vpx},{\vpy}) circle(2/\scale pt) node[anchor=south west] {$v'$};
\fill[blue] ({\vppx},{\vppy}) circle(2/\scale pt) node[anchor=north] {$v''$};

\fill ({3/2*(\yx+1)+3/4*\yy},{3/4*\yy}) circle(2/\scale pt) node[anchor=west] {$w$};

\fill (0,0) circle(2/\scale pt) node[anchor=north] {$0$};
\end{tikzpicture}
\caption{
An example of the situation in Lemma~\ref{lem:triangles} and its proof for $d = \d$:
$T$ (red, dashed), $S$ (blue, solid), $d T$ (orange, dotted).
}
\label{fig:triangles}
\end{figure}

In the upcoming proof of Theorem~\ref{thm:3d_cones},
we also use the following well-known fact:
The Banach--Mazur distance between any centrally symmetric convex body in $\R^2$ and $\Delta^2$ is equal to $2$.
More generally, the Banach--Mazur distance between any centrally symmetric convex body in $\R^n$ and $\Delta^n$ equals $n$
(see, for example, \cite[Corollary~$5.8$]{glmp}).

\begin{proof}[Proof of Theorem~\ref{thm:3d_cones}]
Let $d^* := \dbm(B_1, B_2)$ and $d := \dbm(C_1, C_2)$.
Our first goal is to establish the upper bound $d \leq d^*$.
To this end, we may assume that $B_1, B_2$ are both $0$-symmetric and contained in the plane given by $x_3 = 0$,
the inclusions $B_1 \subseteq B_2 \subseteq d^* B_1$ hold,
and $v^1 = v^2 = e^3$.
Then
\[
    C_1
	= \conv(B_1 \cup \{e^3\})
	\subseteq \conv(B_2 \cup \{e^3\})
	= C_2
\]
and, since $0 \in d^* B_1$,
\[
	C_2
	= \conv(B_2 \cup \{e^3\})
	\subseteq \conv(d^*B_1 \cup \{d^*e^3\})
	=d^* C_1.
\]
It follows that $C_1 \subseteq C_2 \subseteq d^*C_1$,
which implies $d \leq d^*$.
It remains to prove the reverse inequality.
\pagebreak

Toward the lower bound, we may assume that $B_1$ is $0$-symmetric and contained in the plane given by $x_3 = 0$,
and there exist vectors $u, v \in \R^3$ such that $C_1 = \conv((B_1+u) \cup\{e^3+u\})$ satisfies
\begin{equation}
\label{eq:incl_3d_cones}
	0
	\in C_1
	\subseteq C_2
	= \conv(B_2 \cup \{v\})
	\subseteq d C_1.
\end{equation}
In this case, $u_3 \in [-1, 0]$.
By the first part of the proof and a result of Stromquist \cite{stromquist},
we have $d \leq d^* \leq \frac{3}{2}$.
To establish $d \geq d^*$,
we consider three cases according to the height $v_3$ of the vertex $v$ of $C_2$ relative to the base $B_1 + u$ of $C_1$.
From the inclusions above, it follows that $d u_3 + d \geq v_3 \geq d u_3$.

\textbf{Case 1.} $v_3 \geq u_3+1$. \\
By Lemma~\ref{lem:proj}, there exists a linear projection $P : \R^3 \to Y$, where $Y$ is the plane given by $x_3 = 0$,
such that $P(e^3) \in B_1$ and $P(v-u) \in B_1$.
Then 
\[
    P(C_1)
	= \conv( (P(B_1) + P(u)) \cup \{P(e^3) + P(u)\} )
    = B_1 + P(u),
\]
and applying $P$ to \eqref{eq:incl_3d_cones} gives
\begin{equation}
\label{eq:3d_cones_final_chain}
    B_1 + P(u)
    = P(C_1)
    \subseteq P(C_2)
    \subseteq d P(C_1)
    = d B_1 + d P(u).
\end{equation}
By $B_1+u \subseteq C_2$, $v \notin B_1+u$, and $P(v) \in B_1 + P(u) = P(B_1+u)$, Lemma~\ref{lem:vertex_absorbing} yields
\[
	P(C_2)
	= P(B_2).
\]
Consequently, $P(B_2)$ must be a non-degenerate linear image of $B_2$ in $Y$.
It follows from \eqref{eq:3d_cones_final_chain} that $d \geq \dbm(B_1, P(B_2)) = d^*$, as required.

\textbf{Case 2.} $u_3 + 1 > v_3 \geq u_3$. \\
Our goal is to show that this case cannot occur.
Since $e^3+u \in C_2 = \conv (B_2 \cup \{v\})$, the cone $C_2$ contains a point with $x_3$-coordinate at least $u_3+1$.
As $v_3 < u_3+1$, it must be some point from the base $B_2$.
Similarly, $B_1 + u \subseteq C_2$ implies that $C_2$ contains a point with $x_3$-coordinate at most $u_3$.
If no point from $B_2$ would satisfy this condition, then the section of $C_2$ for $x_3 = u_3$ would necessarily consist only of $v$.
Yet the corresponding section of $C_1$ is the $2$-dimensional set $B_1+u$,
which would contradict $C_1 \subseteq C_2$.
In consequence, $B_2$ must contain a point with $x_3$-coordinate at most $u_3$.
By convexity and $u_3 + 1 > v_3 \geq u_3$, there exists $y \in B_2$ with $y_3=v_3$ (see Figure~\ref{fig:3d_case2}).
Now, let $P : \R^3 \to Y$ be a linear projection onto the plane $Y := \lin(B_2-y)$ satisfying $P(v-y) = 0$.
Then $P(v) = P(y) \in P(B_2)$, so
\[
    P(C_2)
    = \conv (P(B_2) \cup \{P(v)\})
    = P(B_2)
    = P(B_2 - y) + P(y)
    = B_2 - y + P(y).
\]
Furthermore, $P(B_1)$ is a segment since the kernel $\lin\{v-y\}$ of $P$ is a subset of the plane given by $x_3 = 0$ containing $B_1$.
As $P(C_1) = \conv( (P(B_1)+P(u)) \cup \{P(e^3)+P(u)\} )$ is a projection of a $3$-dimensional set,
it must be $2$-dimensional and, thus, a triangle in $Y$.
Lastly, from applying $P$ to \eqref{eq:incl_3d_cones} to get
\[
    P(C_1)
    \subseteq P(C_2)
    = B_2 - y + P(y)
    \subseteq d P(C_1),
\]
we conclude that $\dbm(B_2,\Delta^2) \leq d \leq \frac{3}{2}$.
However, by the symmetry of $B_2$, this distance is actually equal to $2$, so we obtain the desired contradiction.
In summary, the case for $u_3 + 1 > v_3 \geq u_3$ never occurs.

\begin{figure}[ht]
\centering
\def\r{4.25}
\def\tx{0}
\def\ty{-1.25}
\def\tz{-1.67}
\def\ang{180}
\def\angg{330}
\def\ply{9}
\def\scale{0.75}
\tdplotsetmaincoords{75}{50}
\begin{tikzpicture}[line join=round, scale=\scale, tdplot_main_coords]
\coordinate (rA) at (\tx+\r,\ty+\r,\tz);
\coordinate (rB) at (\tx+\r,\ty-\r,\tz);
\coordinate (rC) at (\tx-\r,\ty-\r,\tz);
\coordinate (rD) at (\tx-\r,\ty+\r,\tz);
\coordinate (rE) at (\tx,\ty,\tz+2*\r);

\draw[very thick, orange, dashed] (rC) -- (rD) -- (rA);
\draw[very thick, orange, dashed] (rD) -- (rE);

\coordinate (F) at (0,-4,2/3);

\draw[very thick, blue, dashed]
  plot[domain=\ang:\angg,samples=60]
  ({0+1.815*sin(\x)},{1},{1/3+2*cos(\x)});

\coordinate (A) at (1,1,0);
\coordinate (B) at (1,-1,0);
\coordinate (C) at (-1,-1,0);
\coordinate (D) at (-1,1,0);

\coordinate (E) at (0,0,2);

\draw[very thick, red, dashed] (C) -- (D) -- (A);
\draw[very thick, red, dashed] (D) -- (E);

\draw[very thick, red] (B) -- (C) -- (E);

\draw[very thick, red] (1,\ply,0) -- (-1,\ply,0) -- (0,\ply,2) -- cycle;

\draw[very thick, blue]
  plot[domain=0:360,samples=360]
  ({0+1.815*sin(\x)},\ply,{1/3+2*cos(\x)});

\draw[very thick, orange]
  (\tx+\r,\ply,\tz) -- (\tx-\r,\ply,\tz) -- (\tx,\ply,\ty+2*\r) -- cycle;

\draw[thin, densely dashed] (F) -- (0,\ply,2/3);

\draw[very thick, red] (A) -- (B) -- (E) -- cycle;

\draw[very thick, blue]
  plot[domain=\angg-360:\ang,samples=60]
  ({0+1.815*sin(\x)},{1},{1/3+2*cos(\x)}) -- (F) -- cycle;

\draw[very thick, orange] (rB) -- (rE) -- (rA) -- (rB) -- (rC) -- (rE);
\end{tikzpicture}
\caption{
An example of the situation in Case $2$ in the proof of Theorem~\ref{thm:3d_cones}:
$C_1$ (red), $C_2$ (blue), $d C_2$ (orange).
The vertex of the middle cone is projected onto the projection image of the cone's base.
}
\label{fig:3d_case2}
\end{figure}
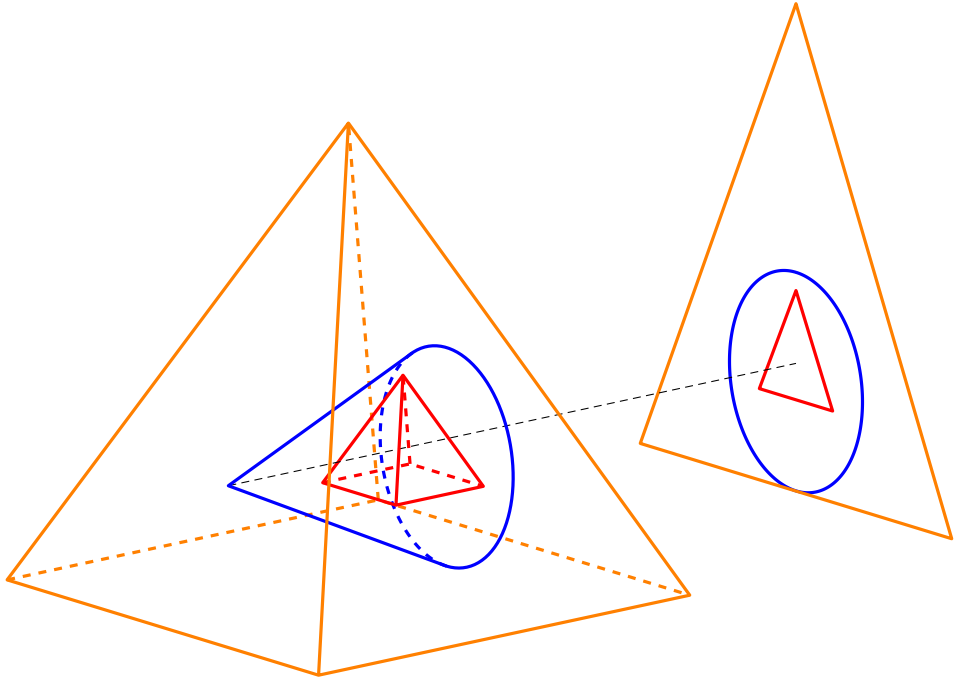

\textbf{Case 3}. $v_3 < u_3$. \\
Similar to the second case, our goal here is to use linear projections onto appropriately chosen planes to reach a contradiction with the Banach--Mazur distance to the triangle.
However, the situation now requires a more detailed analysis
since obtaining only triangles or non-degenerate affine transformations of the bases as the projections of both $C_1$ and $C_2$ simultaneously is not necessarily possible anymore.
Instead, we use Lemma~\ref{lem:triangles} to control the shapes of the possible projection images.

In the following, we may assume that all points of the base $B_2$ have strictly larger $x_3$-coordinate than $v$; otherwise, the reasoning from the previous case would still apply.
Furthermore, shifting $C_1$ and $C_2$ upward by $\frac{v_3 - d u_3}{d-1} e^3$ (and consequently shifting $d C_1$ by $d \frac{v_3 - d u_3}{d-1} e^3$),
where
\[
    v_3 + \frac{v_3 - d u_3}{d-1}
    = d u_3 + d \frac{v_3 - d u_3}{d-1},
\]
preserves the assumption $v_3 < u_3$ and the inclusions $0 \in C_1 \subseteq C_2 \subseteq d C_1$ by $0 \in B_1$ and the previous assumption.
Thus, we may also suppose that the vertex $v$ of $C_2$ lies in the base $d (B_1+u)$ of $d C_1$.

Next, we claim that there exists a plane $V \subseteq \R^3$ such that $0, e^3+u, v \in V$ and the intersection $(B_1 + u) \cap V$ is a segment.
Indeed, if the line through $e^3+u$ and $v$ meets $0$, then we can take any plane containing this line and intersecting the relative interior of $B_1 + u$.
Otherwise, the ray from $e^3+u \neq 0$ through $0$ meets $B_1 + u$ by $0 \in C_1$, in a point different from $\frac{1}{d} v$.
Since $\frac{1}{d} v \in B_1+u$ by our second assumption above, this means the plane
$\lin\{e^3+u, v\}$ intersects $B_1 + u$ in a segment.
In either case, a plane $V$ with all required properties exists.

Now, $T := C_1 \cap V$ is a triangle.
Furthermore, the intersection $B_2 \cap V$ is at most a segment, as $v \in V \setminus B_2$.
The containments
\[
    T
    = C_1 \cap V
    \subseteq C_2 \cap V
    \subseteq (dC_1) \cap V
    = dT
\]
following from \eqref{eq:incl_3d_cones} thus imply that the intersection $S := C_2 \cap V$ is $2$-dimensional,
that is, $B_2 \cap V$ is a segment and $S$ is also a triangle.
We can now choose a coordinate system within $V$ that makes Lemma~\ref{lem:triangles} applicable.
Let us write $(B_1+u) \cap V = [y-f^1, y+f^1]$ for some vectors $y, f^1 \in V$, such that the point $y$ corresponds to the point $y$ in the lemma, the vector $f^1$ corresponds to the canonical unit basis vector $e^1 \in \R^2$,
and the vector $f^2 \in V$ with $y+f^2 = e^3+u$ corresponds to the vector $e^2 \in \R^2$.
Similarly, the vertex $v$ of $C_2$ corresponds to the point $v$ in the lemma and we can choose $v', v'' \in V$ such that $B_2 \cap V = [v', v'']$ and all conditions of the lemma are satisfied for the triangles $S$ and $T$.
In the following, we consider separate cases given by the two conditions in the lemma.

Let us suppose that condition (i) of Lemma~\ref{lem:triangles} holds
and let $P : \R^3 \to Y$ be the orthogonal projection onto the plane $Y$ orthogonal to $f^1$, i.e., $P(f^1) = 0$ (see Figure~\ref{fig:3d_case3i}).
Note that $\lin\{f^1\}$ is the first coordinate axis of the chosen coordinate system in $V$,
which is the common line of the planes $V$ and $\lin(B_1)$.
Therefore, both of these planes are projected onto lines in $Y$.
Similar to the second case,
the set $P(C_1)$ is thus a triangle in the plane $Y$.
While the projection $P(C_2)$ might be affinely non-equivalent to $B_2$ this time,
condition (i) of Lemma~\ref{lem:triangles} nonetheless allows us to upper bound $\dbm(P(C_2), B_2)$.
Since $P(v), P(v'), P(v'')$ lie on the line $P(V)$ in $Y$,
it yields some $\mu \in [1,\frac{4}{3})$ such that
\[
    P(v)
    = \mu(P(v'')-P(v')) + P(v').
\]
Consequently, by $v'' \in B_2$,
\[
    P(v)
    \in \mu (P(B_2) - P(v')) + P(v').
\]
Furthermore, $\mu \geq 1$ and $v' \in B_2$ yield $B_2 \subseteq \mu(B_2-v') + v'$, so that
\[
    P(B_2)
    \subseteq \mu(P(B_2)-P(v')) + P(v').
\]
Altogether, we obtain that
\begin{equation}
\label{eq:incl_mu}
    P(B_2)
    \subseteq P(C_2)
    = \conv (P(B_2) \cup \{P(v)\})
    \subseteq \mu(P(B_2) - P(v')) + P(v').
\end{equation}
In particular, since the set $P(C_2)$ is $2$-dimensional as a projection of a $3$-dimensional convex body,
the set $P(B_2)$ must also be $2$-dimensional.
Therefore, $P(B_2)$ is a non-degenerate linear image of $B_2$,
and the inclusions \eqref{eq:incl_mu} together with $P(C_1) \subseteq P(C_2) \subseteq d P(C_1)$ from \eqref{eq:incl_3d_cones} show that
\[
    \dbm(B_2, \Delta^2)
    \leq \dbm(P(B_2), P(C_2)) \cdot \dbm(P(C_2), P(C_1))
    \leq \mu \cdot d
    < \frac{4}{3} \cdot \frac{3}{2}
    = 2.
\]
This contradicts the Banach--Mazur distance between any planar symmetric convex body and $\Delta^2$ being equal to $2$,
as desired.
\pagebreak

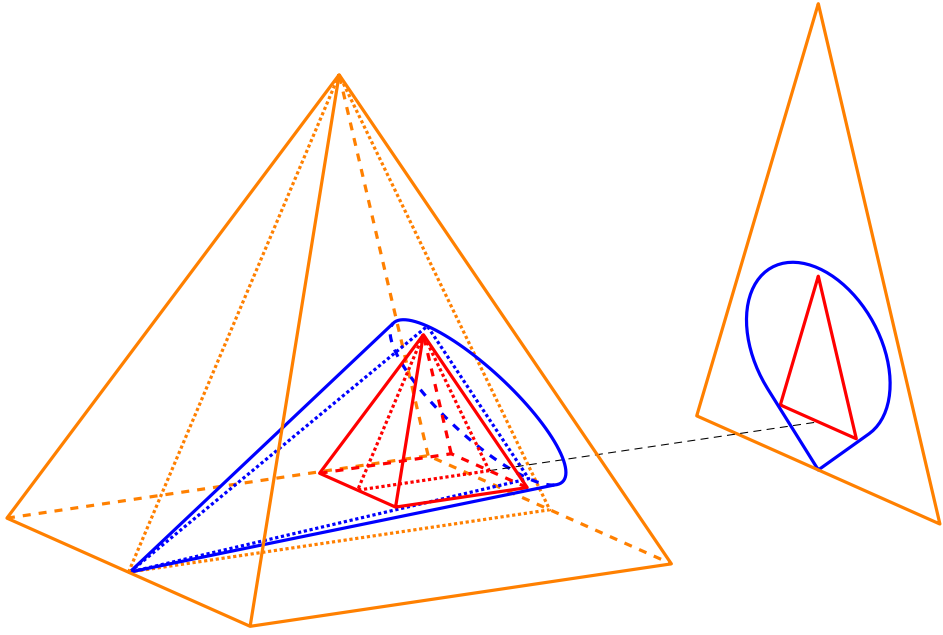
\begin{figure}[ht]
\centering
\def\r{3.2}
\def\tx{0}
\def\ty{-1.28}
\def\tz{-0.66}
\def\ang{245}
\def\angg{50}
\def\ply{6}
\def\scale{1}
\tdplotsetmaincoords{75}{60}
\begin{tikzpicture}[line join=round, scale=\scale, tdplot_main_coords]
\coordinate (rA) at (\tx+\r,\ty+\r,\tz);
\coordinate (rB) at (\tx+\r,\ty-\r,\tz);
\coordinate (rC) at (\tx-\r,\ty-\r,\tz);
\coordinate (rD) at (\tx-\r,\ty+\r,\tz);
\coordinate (rE) at (\tx,\ty,\tz+2*\r);

\draw[very thick, orange, dashed] (rC) -- (rD) -- (rA);
\draw[very thick, orange, dashed] (rD) -- (rE);

\draw[very thick, orange, densely dotted]
  (0,\ty-\r,\tz) -- (0,\ty+\r,\tz) -- (0,\ty,\tz+2*\r);

\coordinate (F) at (0,-4.46,-0.66);

\draw[very thick, blue, dashed]
  plot[domain=\ang-360:\angg,samples=60]
  ({-1.89*cos(\x)},{0.83-0.77*sin(\x)},{0.96+1.15*sin(\x)});

\draw[very thick, blue, densely dotted] (F) -- (0,0.06,2.11) -- (0,1.6,-0.19) -- cycle;

\draw[very thick, red] (1,\ply,0) -- (-1,\ply,0) -- (0,\ply,2) -- cycle;

\draw[very thick, blue]
  plot[domain=-45:225,samples=60]
  ({-1.89*cos(\x)},\ply,{0.96+1.15*sin(\x)}) -- (\tx,\ply,\tz) -- cycle;

\draw[very thick, orange]
  (\tx+\r,\ply,\tz) -- (\tx-\r,\ply,\tz) -- (\tx,\ply,\tz+2*\r) -- cycle;

\draw[thin, densely dashed] (0,1,0) -- (0,\ply,0);

\coordinate (A) at (1,1,0);
\coordinate (B) at (1,-1,0);
\coordinate (C) at (-1,-1,0);
\coordinate (D) at (-1,1,0);

\coordinate (E) at (0,0,2);

\draw[very thick, red, dashed] (C) -- (D) -- (A);
\draw[very thick, red, dashed] (D) -- (E);

\draw[very thick, red, densely dotted] (0,1,0) -- (0,-1,0) -- (0,0,2) -- cycle;

\draw[very thick, red] (B) -- (E) -- (A) -- (B) -- (C) -- (E);

\draw[very thick, blue]
  plot[domain=\angg:\ang,samples=60]
  ({-1.89*cos(\x)},{0.83-0.77*sin(\x)},{0.96+1.15*sin(\x)}) -- (F) -- cycle;

\draw[very thick, orange, densely dotted] (0,\ty-\r,\tz) -- (0,\ty,\tz+2*\r);

\draw[very thick, orange] (rB) -- (rE) -- (rA) -- (rB) -- (rC) -- (rE);
\end{tikzpicture}
\caption{
An example of the situation for condition (i) of Lemma~\ref{lem:triangles} in Case $3$ in the proof of Theorem~\ref{thm:3d_cones}:
$C_1$ (red), $C_2$ (blue), $d C_2$ (orange).
The triangles $T = C_1 \cap V$, $S = C_2 \cap V$, and $d T$
are marked as dotted sections of the respective cones.
The projection along the first coordinate axis in the chosen coordinate system in $V$
maps the middle cone to a set almost affinely equivalent to its base.
}
\label{fig:3d_case3i}
\end{figure}

Finally, let us assume that condition (ii) of Lemma~\ref{lem:triangles} is true.
Let $w = v'' - v'$ and let $Y$ be the plane given by $x_3 = 0$.
Note that the ray from $y+f^2 = e^3+u \notin Y + u$ with direction $w$ meets the line through $y-f^1$ and $y+f^1$ in a point in $Y+u$, so $w_3 \neq 0$.
Thus, we can take a linear projection $P : \R^3 \to Y$ satisfying $P(w) = 0$ (see Figure~\ref{fig:3d_case3ii}).
Since $w$ is parallel to $\lin(B_2)$, the set $P(B_2)$ is a segment.
Moreover, we know that $P(C_2)$ must be a $2$-dimensional set as a projection of a $3$-dimensional convex body, so $P(C_2)$ is a triangle in $Y$.
Similar to before, the set $P(C_1)$ might be affinely non-equivalent to $B_1$, but its Banach--Mazur distance to $B_1$ can be bounded.
Condition (ii) of Lemma~\ref{lem:triangles} implies that
there exists $\mu \in [\frac{1}{2}, \frac{4}{3})$ such that
\[
    P(e^3 + u)
    = P(y + f^2)
    = P(y + (2 \mu - 1) f^1).
\]
Writing $\mu^* = \max \{ \mu, 1 \} \in [1,\frac{4}{3})$ and using $y - f^1, y + f^1 \in B_1 + u$ gives
\begin{align*}
    P(e^3 + u)
    & = \mu(P(y+f^1) - P(y-f^1)) + P(y-f^1)
    \\
    & \in \mu (P(B_1+u) - P(y-f^1)) + P(y-f^1)
    \\
    & \subseteq \mu^* (P(B_1 + u) - P(y-f^1)) + P(y-f^1).
\end{align*}
Furthermore, we also have
\[
    P(B_1+u)
    \subseteq \mu^* (P(B_1+u) - P(y-f^1)) + P(y-f^1),
\]
and consequently
\[
    P(B_1+u)
    \subseteq P(C_1)
    = \conv(P(B_1+u) \cup \{P(e^3+u)\})
    \subseteq \mu^* (P(B_1+u) - P(y-f^1)) + P(y-f^1).
\]
Since $P(C_1)$ is $2$-dimensional as projection of a $3$-dimensional convex body,
we see that also $P(B_1+u)$ is $2$-dimensional and therefore a non-degenerate affine image of $B_1$.
Altogether, with $P(C_1) \subseteq P(C_2) \subseteq d P(C_1)$ from \eqref{eq:incl_3d_cones} we obtain that
\[
    \dbm(B_1, \Delta^2)
    \leq \dbm(P(B_1+u), P(C_1)) \cdot \dbm(P(C_1), P(C_2))
    \leq \mu^* \cdot d
    < \frac{4}{3} \cdot \frac{3}{2} = 2,
\]
which again contradicts that $B_1$ is symmetric and its Banach--Mazur distance to $\Delta^2$ is $2$.
We obtained contradictions in all situations except Case $1$,
so $d \geq d^*$ follows as claimed.
\end{proof}

Let us point out that symmetry of the bases is not strictly necessary for the validity of Theorem~\ref{thm:3d_cones}.
For the above proof to work, it is enough if $B_1, B_2 \subseteq \R^3$ are $2$-dimensional convex bodies that satisfy
\[
    \dbm(B_1,B_2) \leq \min \left\{ \frac{3}{2}, \, \frac{3}{4} \dbm(B_1,\Delta^2), \, \frac{3}{4} \dbm(B_2,\Delta^2) \right\}.
\]

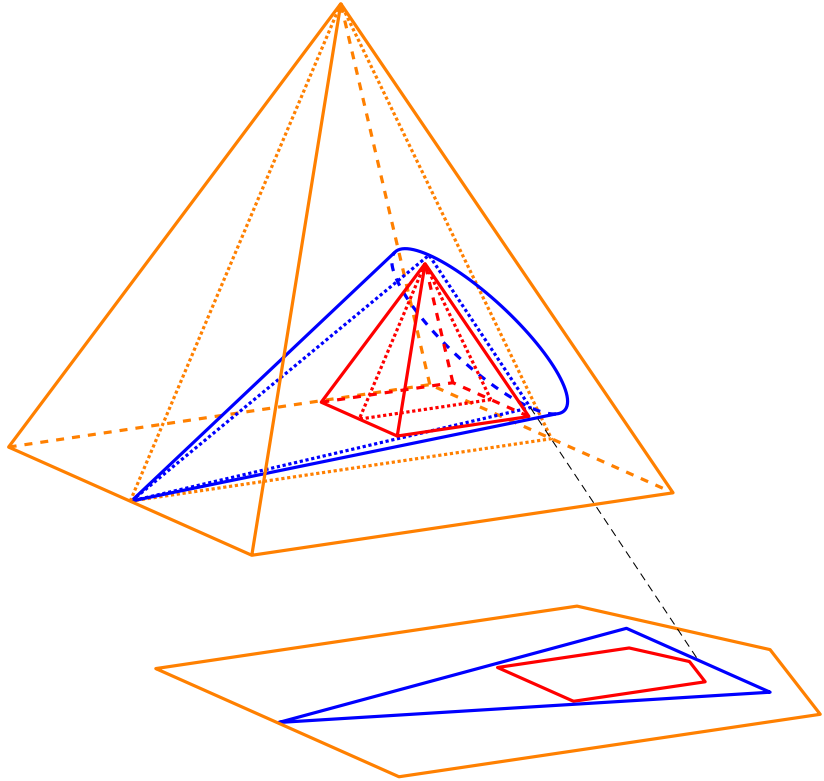
\begin{figure}[t]
\centering
\def\r{3.2}
\def\tx{0}
\def\ty{-1.28}
\def\tz{-0.66}
\def\ang{245}
\def\angg{50}
\def\plz{-4}
\def\scale{1}
\tdplotsetmaincoords{75}{60}
\begin{tikzpicture}[line join=round, scale=\scale, tdplot_main_coords]
\coordinate (rA) at (\tx+\r,\ty+\r,\tz);
\coordinate (rB) at (\tx+\r,\ty-\r,\tz);
\coordinate (rC) at (\tx-\r,\ty-\r,\tz);
\coordinate (rD) at (\tx-\r,\ty+\r,\tz);
\coordinate (rE) at (\tx,\ty,\tz+2*\r);

\draw[very thick, orange, dashed] (rC) -- (rD) -- (rA);
\draw[very thick, orange, dashed] (rD) -- (rE);

\draw[very thick, orange, densely dotted]
  (0,\ty-\r,\tz) -- (0,\ty+\r,\tz) -- (0,\ty,\tz+2*\r);

\coordinate (F) at (0,-4.46,-0.66);

\draw[very thick, blue, dashed]
  plot[domain=\ang-360:\angg,samples=60]
  ({-1.89*cos(\x)},{0.83-0.77*sin(\x)},{0.96+1.15*sin(\x)});

\draw[very thick, blue, densely dotted] (F) -- (0,0.06,2.11) -- (0,1.6,-0.19) -- cycle;

\draw[very thick, red]
  (1,{1-\plz*(1.54/2.3)},\plz) -- (0,{(2-\plz)*(1.54/2.3)},\plz)
  -- (-1,{1-\plz*(1.54/2.3)},\plz) -- (-1,{-1-\plz*(1.54/2.3)},\plz)
  -- (1,{-1-\plz*(1.54/2.3)},\plz) -- cycle;

\draw[very thick, blue]
  (0,{-4.46-(0.66+\plz)*(1.54/2.3)},\plz) -- (-1.89,{0.83-(-0.96+\plz)*(1.54/2.3)},\plz)
  -- (1.89,{0.83-(-0.96+\plz)*(1.54/2.3)},\plz) -- cycle;

\draw[very thick, orange]
  (\tx+\r,{\ty+\r-(\plz-\tz)*(1.54/2.3)},\plz) -- (\tx,{\ty-(\plz-\tz-2*\r)*(1.54/2.3)},\plz)
  -- (\tx-\r,{\ty+\r-(\plz-\tz)*(1.54/2.3)},\plz)
  -- (\tx-\r,{\ty-\r-(\plz-\tz)*(1.54/2.3)},\plz)
  -- (\tx+\r,{\ty-\r-(\plz-\tz)*(1.54/2.3)},\plz) -- cycle;

\draw[thin, densely dashed] (0,1.6,-0.19) -- (0,{0.83-(-0.96+\plz)*(1.54/2.3)},\plz);

\coordinate (A) at (1,1,0);
\coordinate (B) at (1,-1,0);
\coordinate (C) at (-1,-1,0);
\coordinate (D) at (-1,1,0);

\coordinate (E) at (0,0,2);

\draw[very thick, red, dashed] (C) -- (D) -- (A);
\draw[very thick, red, dashed] (D) -- (E);

\draw[very thick, red, densely dotted] (0,1,0) -- (0,-1,0) -- (0,0,2) -- cycle;

\draw[very thick, red] (B) -- (E) -- (A) -- (B) -- (C) -- (E);

\draw[very thick, blue]
  plot[domain=\angg:\ang,samples=60]
  ({-1.89*cos(\x)},{0.83-0.77*sin(\x)},{0.96+1.15*sin(\x)}) -- (F) -- cycle;

\draw[very thick, orange, densely dotted] (0,\ty-\r,\tz) -- (0,\ty,\tz+2*\r);

\draw[very thick, orange] (rB) -- (rE) -- (rA) -- (rB) -- (rC) -- (rE);
\end{tikzpicture}
\caption{
An example of the situation for condition (ii) of Lemma~\ref{lem:triangles} in Case $3$ in the proof of Theorem~\ref{thm:3d_cones}:
$C_1$ (red), $C_2$ (black), $d C_2$ (orange).
The projection with kernel parallel to $w$
maps the inner and outer cones to sets almost affinely equivalent to their bases.
}
\label{fig:3d_case3ii}
\end{figure}

Next, we turn to Theorem~\ref{thm:sym_cones} on double cones with planar symmetric bases.
The reasoning is quite similar to the one used in the proof of Theorem~\ref{thm:l1_sum}.
\pagebreak

\begin{proof}[Proof of Theorem~\ref{thm:sym_cones}]
To prove the upper bound, suppose that the norms $\|\cdot\|_{X_1}$ and $\|\cdot\|_{X_2}$ in $\R^2$ are chosen so that
\[
    \|x'\|_{X_1}
    \leq \|x'\|_{X_2}
    \leq d^* \|x'\|_{X_1}
\]
for all $x' \in \R^2$, where $d^* := \dbm(X_1,X_2)$.
Then, for any $x = (x', x'') \in \R^{m+2}$ with $x' \in \R^2$ and $x'' \in \R^m$, we have
\[
    \|x\|_{X_1 \oplus_1 \ell_1^m}
    = \|x'\|_{X_1} + \|x''\|_{1}
    \leq \|x'\|_{X_2} + \|x''\|_{1}
    = \|x\|_{X_2 \oplus_1 \ell_1^m}
\]
and
\[
    \|x\|_{X_2 \oplus_1 \ell_1^m}
    = \|x'\|_{X_2} + \|x''\|_{1}
    \leq d^*\|x'\|_{X_1} + d^*\|x''\|_{1}
    = d^*\|x\|_{X_1 \oplus_1 \ell_1^m}.
\]
Thus,
\[
    \dbm(X_1 \oplus_1 \ell_1^{m}, X_2 \oplus_1 \ell_1^{m})
    \leq d^*
\]
for every integer $m \geq 1$.

To prove the lower bound,
we proceed by induction on $m \geq 0$,
with the convention that $X \oplus_1 \ell_1^0 = X$.
Therefore, there is nothing to show for $m = 0$.
Now, assume $m \geq 1$ and that the lower bound holds for $m-1$, that is,
\begin{equation}
\label{eq:sym_cones_ind}
    \dbm(X_1 \oplus_1 \ell_1^{m-1}, X_2 \oplus_1 \ell_1^{m-1})
    \geq d^*.
\end{equation}
Without loss of generality, we may assume that 
\begin{equation}
\label{eq:dist_assumption}
	\dbm(X_1, \ell_1^2)
    \geq d^*.
\end{equation}
Let $C_1, C_2$ be unit balls of spaces isometric to $X_1 \oplus_1 \ell_1^m$ and $X_2 \oplus_1 \ell_1^m$, respectively, such that
\begin{equation}
\label{eq:incl_sym_cones}
    C_1
    \subseteq C_2
    \subseteq d C_1
\end{equation}
for $d := \dbm(X_1 \oplus_1 \ell_1^{m}, X_2 \oplus_1 \ell_1^{m})$.
We may suppose that $C_1 = \conv( B_1 \cup \{\pm e^3, \ldots, \pm e^{m+2} \} )$, where $B_1 \subseteq \lin \{e^1, e^2\}$ is the unit ball of $X_1$,
and $C_2=\conv( B_2 \cup \{\pm v^3, \ldots, \pm v^{m+2} \} )$ for some $2$-dimensional $0$-symmetric convex body $B_2$ affinely equivalent to the unit ball of $X_2$ and vectors $v^3, \ldots, v^{m+2} \in \R^{m+2}$.

Since $e^{m+2} \in C_1 \subseteq C_2$, there exists a point $v \in C_2$ with $v_{m+2} \geq 1$.
We define a vector $w := \frac{v}{v_{m+2}}$.
Let $Y \subseteq \R^{m+2}$ be the subspace given by $x_{m+2} = 0$ and let $P : \R^{m+2} \to Y$ be the linear projection onto $Y$ with $P(x) = x - x_{m+2} w$ for $x \in \R^{m+2}$. Then $P(v) = 0$ and
\[
    P(e^{m+2})
    = e^{m+2} - w
    = -\pi_Y(w)
\]
for $\pi_Y$ the orthogonal projection onto $Y$.
Writing $C_1' = \conv(B_1 \cup \{\pm e^3, \ldots, \pm e^{m+1} \})$, we note that
\[
    2
    \geq d^*
    \geq d
    \geq \|\pi_Y(v)\|_{C_1'} + |v_{m+2}|.
\]
Consequently, by $v_{m+2} \geq 1$,
\[
    \|P(e^{m+2})\|_{C_1'}
    = \|-\pi_Y(w)\|_{C_1'}
    \leq \frac{2-v_{m+2}}{v_{m+2}}
    \leq 1.
\]
It follows that
\[
    P(C_1)
    = \conv(P(C_1') \cup \{ \pm P(e^{m+2}) \})
    = C_1'.
\]
Applying $P$ to \eqref{eq:incl_sym_cones} yields
\begin{equation}
\label{eq:sym_cones_chain}
    P(C_1)
    \subseteq P(C_2)
    \subseteq d P(C_1).
\end{equation}
\pagebreak

To handle $P(C_2)$, we consider two cases for the choice of $v$.
If $v$ is one of the $\pm v^i$ for $i = 3, \ldots, m+2$, say $v = v^{m+2}$, then $P(v) = 0$ implies
\[
    P(C_2)
    = \conv( P(B_2) \cup \{ \pm P(v^3), \ldots, \pm P(v^{m+1}) \})
    =: C_2'.
\]
Since $C_2'$ is $(m+1)$-dimensional as a projection of an $(m+2)$-dimensional convex body,
$P(B_2)$ is $2$-dimensional by
\[
    m+1
    = \dim(C_2')
    \leq \dim(P(B_2)) + m-1
    \leq m+1.
\]
Consequently,
$P(B_2)$ is affinely equivalent to $B_2$,
and $C_2'$ is the unit ball of a space isometric to $X_2 \oplus_1 \ell_1^{m-1}$.
With \eqref{eq:sym_cones_ind} and \eqref{eq:sym_cones_chain}, the claim follows by
\[
    d
    \geq \dbm(P(C_1), P(C_2))
    = \dbm(X_1 \oplus_1 \ell_1^{m-1}, X_2 \oplus_1 \ell_1^{m-1})
    \geq d^*.
\]

Now, assume that $v$ belongs to $B_2$.
Since $P(v) = 0$, the projection $P(B_2)$ is a segment.
Consequently,
\[
    P(C_2)
    = \conv( P(B_2) \cup \{ \pm P(v^3), \ldots, \pm P(v^{m+2}) \})
\]
is a centrally symmetric polytope with at most $2(m+1)$ vertices.
Since it is $(m+1)$-dimensional as a projection of an $(m+2)$-dimensional convex body, $P(C_2)$ is a non-degenerate linear image of the $(m+1)$-dimensional cross-polytope.
Therefore, Theorem~\ref{thm:l1_sum} implies
\[
    \dbm(P(C_1), P(C_2))
    = \dbm(X_1 \oplus_1 \ell_1^{m-1}, \ell_1^{m+1})
    = \dbm(X_1, \ell_1^2).
\]
With the assumption \eqref{eq:dist_assumption} and the inclusions \eqref{eq:sym_cones_chain}, we obtain
\[
    d
    \geq \dbm(P(C_1), P(C_2))
    = \dbm(X_1, \ell_1^2)
    \geq d^*.
\]
Since $v$ can always be chosen in one of the two ways considered above, the induction step and the theorem follow.
\end{proof}

Before we proceed with the corollaries of Theorem~\ref{thm:sym_cones},
let us discuss the necessity of the assumption on the distances to $\ell_1^2$ in the theorem.
If the assumption fails, meaning that $\dbm(X_1,\ell_1^2) < \dbm(X_1,X_2)$ and $\dbm(X_2,\ell_1^2) < \dbm(X_1,X_2)$,
then the conclusion of the theorem is wrong,
at least for $m \geq 2$.
Indeed, suppose that $m \geq 2$ and there exists some $1 \leq d < \dbm(X_1,X_2)$ such that  $\dbm(X_i, \ell_1^2) \leq d$ for $i=1, 2$. Without loss of generality we may assume that
\[
    \frac{1}{d} \|x\|_1 \leq \|x\|_{X_1} \leq \|x\|_1
        \quad \text{and} \quad
    \|x\|_1 \leq \|x\|_{X_2} \leq d \|x\|_1
\]
for all $x \in \R^2$.
Then, for all $(x,y,z) \in \R^{m+2}$ with $x,y \in \R^2$ and $z \in \R^{m-2}$,
\[
    \|(x,y,z)\|_{X_1 \oplus_1 \ell_1^m}
    = \|x\|_{X_1} + \|y\|_1 + \|z\|_1
    \leq \|x\|_1 + \|y\|_{X_2} + \|z\|_1
    = \|(x,y,z)\|_{\ell_1^2 \oplus_1 X_2 \oplus_1 \ell_1^{m-2}}.
\]
Moreover,
\[
    \|(x,y,z,)\|_{\ell_1^2 \oplus_1 X_2 \oplus_1 \ell_1^{m-2}}
    = \|x\|_1 + \|y\|_{X_2} + \|z\|_1
    \leq d \|x\|_{X_1} + d \|y\|_1 + d \|z\|_1
    = d \|(x,y,z)\|_{X_1 \oplus_1 \ell_1^m}.
\]
Since $\ell_1^2 \oplus_1 X_2 \oplus_1 \ell_1^{m-2}$ and $X_2 \oplus_1 \ell_1^m$ are clearly isometric,
we obtain
\[
    \dbm(X_1 \oplus_1 \ell_1^m, X_2 \oplus_1 \ell_1^m)
    \leq d
    < \dbm(X_1,X_2),
\]
which shows that the conclusion of Theorem~\ref{thm:sym_cones} does not hold in this case.
This naturally leads to the question whether the assumption can ever fail.

\begin{quest}
\label{que:l1}
Does the inequality
\begin{equation}
\label{eq:quest_ineq}
    \dbm(X_1,X_2)
    \leq \max \left\{ \dbm(X_1, \ell_1^2), \dbm(X_2, \ell_1^2) \right\}
\end{equation}
hold for all $2$-dimensional real normed spaces $X_1$ and $X_2$?
\end{quest}

Clearly, a positive answer to this question would allow us to drop the additional assumption in Theorem~\ref{thm:sym_cones} and would lead to an isometric embedding of $\BM^2_s$ into $\BM^n_s$ for all $n \geq 3$. We discuss the plausibility and some further consequences of a positive answer in the following remark.

\begin{remark}
\label{rem:l1}
While the inequality \eqref{eq:quest_ineq} might seem implausible at first, we are not aware of any counterexamples.
Instead, we can provide some circumstantial evidence that might speak to its validity.

First, there are several instances in the real planar setting in which $\ell_1^2$ turns out to be a maximizer of the Banach--Mazur distance.
These include the diameter of the compactum, i.e., the maximal distance between arbitrary spaces (determined by Stromquist \cite{stromquist}),
the maximal distance to the Euclidean space \cite{behrend},
and the maximal distance between $1$-symmetric spaces \cite{grundbacherkobos}.
In each of these examples, $\ell_1^2$ is part of every distance-maximizing pair.
A positive answer to Question~\ref{que:l1} would provide at least a partial explanation of this phenomenon,
as the maximization of the distance over any class of spaces containing $\ell_1^2$ would reduce to finding the space(s) farthest from $\ell_1^2$.
This would often be a significantly easier task.
For example, Stromquist established that the diameter of $\BM^2_s$ equals $\frac{3}{2}$,
and that the unique pair realizing it consists of $\ell_1^2$ and the space whose unit ball is the regular hexagon.
However, the proof of Stromquist is quite long and complicated.
In comparison, it was determined $20$ years earlier by Asplund \cite{asplund} (with a considerably simpler approach)
that the maximal distance to $\ell_1^2$ equals $\frac{3}{2}$.

Second, it is not difficult to prove that the triangle inequality
\[
    \dbm(X_1,X_2)
    \leq \dbm(X_1, \ell_1^2) \cdot \dbm(X_2, \ell_1^2)
\]
is always strict if $X_1$ and $X_2$ are non-isometric to $\ell_1^2$.
This means that a counterexample to \eqref{eq:quest_ineq} cannot be found by constructing a geodesic in $\BM^2_s$ containing the space $\ell_1^2$ in its interior.
Speaking informally, $\ell_1^2$ appears to be in a ``corner'' of the compactum $\BM^2_s$.
If this ``corner'' is narrow enough, the inequality \eqref{eq:quest_ineq} becomes plausible.
However, a positive answer to Open Question~\ref{que:l1} could quite possibly require a significant amount of effort.
\end{remark}

We conclude the paper with the proofs of Corollaries~\ref{cor:sym_embedd}~and~\ref{cor:eq_sets}.

\begin{proof}[Proof of Corollary~\ref{cor:sym_embedd}]
We need to prove that if $X_1, X_2$ are $2$-dimensional normed spaces such that $\dbm(X_i, X) \leq \dbm(X, \ell_1^2)^{\frac{1}{3}}$ for $i=1,2$, then 
\[
    \dbm(X_1 \oplus_1 \ell_1^{n-2}, X_2 \oplus_1 \ell_1^{n-2})
    = \dbm(X_1, X_2)
    =: d.
\]
This follows from Theorem~\ref{thm:sym_cones},
provided that $\dbm(X_1, \ell_1^2) \geq d$.
By the assumption and the triangle inequality, we have
\begin{align*}
    \dbm(X_1,\ell_1^2)
    & \geq \frac{\dbm(X,\ell_1^2)}{\dbm(X,X_1)}
    \geq \frac{\dbm(X,\ell_1^2)}{\dbm(X,\ell_1^2)^\frac{1}{3}}
    = \dbm(X,\ell_1^2)^{\frac{2}{3}}
    \\
    & \geq \dbm(X_1,X) \cdot \dbm(X,X_2)
    \geq \dbm(X_1,X_2),
\end{align*}
as required.
\end{proof}

\begin{proof}[Proof of Corollary~\ref{cor:eq_sets}]
By \cite[Theorem~$1$]{kobosswanepoel},
for every sufficiently large integer $N$ there exist $k := \lceil C^N \rceil$ (with $C > 1$ explicit) $2$-dimensional real normed spaces $X_1, \ldots, X_k$
such that $\dbm(X_i, X_j) = d_N^2$ for all $i \neq j$,
where $d_N= \frac{1}{\cos(\frac{\pi}{4N})}$.
Moreover, the unit balls of the spaces $X_i$ constructed in the proof are combinations of the regular $4N$-gon and the Euclidean disc, which satisfy $\dbm(X_i, \ell_2^2) \leq d_N$ for all $i=1, \ldots, k$.
Therefore, choosing $X = \ell_2^2$ in Corollary~\ref{cor:sym_embedd} gives for $n \geq 3$ and sufficiently large $N$ with $d_N \leq \dbm(\ell_2^2, \ell_1^2)^{\frac{1}{3}}=2^{\frac{1}{6}}$ that
\[
    \dbm(X_i \oplus_1 \ell_1^{n-2}, X_i \oplus_1 \ell_1^{n-2})
    = d_N^2
\]
for all $i \neq j$.
We conclude that the spaces $X_1 \oplus_1 \ell_1^{n-2}, \ldots, X_k \oplus_1 \ell_1^{n-2}$
form an equilateral set in $\BM_s^n$.
Since $k$ can be arbitrarily large, this concludes the proof.
\end{proof}

In closing the paper, let us point out another possible direction for future research.
The order of the size of the $d_N^2$-equilateral sets in $\BM_s^2$ constructed in \cite{kobosswanepoel} is essentially optimal,
as this order matches that of the largest $d_N^2$-separated set in $\BM_s^2$.
The asymptotic growth rate of the maximal cardinality of $(1+\varepsilon)$-separated sets in $\BM_s^n$ as $\varepsilon \to 0^+$
was determined by Bronstein \cite{bronstein} for all $n \geq 1$ and equals $\exp \left( \varepsilon^{\frac{1-n}{2}} \right)$ (see also \cite[Remark~$5$]{kobosswanepoel}).
While it might be possible that for all $n \geq 2$ the symmetric compactum $\BM^n_s$ contains equilateral sets with size of the corresponding order,
lifting the planar construction to higher dimensions cannot achieve this.
The process preserves the cardinality order of $\exp(\varepsilon^{-\frac{1}{2}})$.
Therefore, constructing equilateral sets in $\BM_s^n$ of sizes matching that of largest separated sets
requires a different approach for $n \geq 3$.

\section{Acknowledgements}

The research cooperation was funded by the program Excellence Initiative -- Research University at the Jagiellonian University in Kraków.

\bigskip

\textsc{Department of Mathematics, Technical University of Munich, Germany} \\
\textit{E-mail address}: \textbf{florian.grundbacher@tum.de}.

\vskip 0.2in

\textsc{Faculty of Mathematics and Computer Science, Jagiellonian University in Cracow, Poland} \\
\textit{E-mail address}: \textbf{tomasz.kobos@uj.edu.pl}

\vfill\eject


\begin{thebibliography}{99}

\bibitem{ader} O.~B.~Ader, \emph{An affine invariant of convex regions}, Duke Math.~J.~\textbf{4}(2) (1938), 291--299·

\bibitem{asplund} E.~Asplund, \emph{Comparison between plane symmetric convex bodies and parallelograms}, Math.~Scand.~\textbf{8} (1960), 171--180.

\bibitem{behrend} F.~Behrend, \emph{\"{U}ber einige Affininvarianten konvexer Bereiche}, Math.~Ann.~\textbf{113} (1937), 713–747 (in German).

\bibitem{bourgainszarek} J.~Bourgain, S.~J.~Szarek, \emph{The Banach--Mazur distance to the cube and the Dvoretzky-Rogers factorization}, Israel J.~Math.~\textbf{62}(2) (1988) 169--180.

\bibitem{bourgain} J.~Bourgain, L.~Tzafriri, \emph{Complements of subspaces of $\ell_p^n$; $p \geq 1$ which are uniquely determined}, In: J.~Lindenstrauss, V.~D.~Milman (eds.), Geometrical Aspects of Functional Analysis: Israel Seminar, 1985-86, Lecture Notes in Math., vol.~1267, Springer, Berlin, Heidelberg, 1987, 39--52. 

\bibitem{bronstein} E.~M.~Bronstein, \emph{$\varepsilon$-entropy of affine-equivalent convex bodies and Minkowski’s compactum}, Optimizatsiya \textbf{22}(39) (1978), 5--11 (in Russian).

\bibitem{glmp} Y.~Gordon, A.~E.~Litvak, M.~Meyer, A.~Pajor, \emph{John's decomposition in the general case and applications}, J.~Differential Geom.~\textbf{68}(1) (2004), 99--119.

\bibitem{grundbacherkobos} F.~Grundbacher, T.~Kobos, \emph{On certain extremal Banach--Mazur distances and Ader's characterization of distance ellipsoids}, Mathematika \textbf{72}(1) (2026), e70062.

\bibitem{grundbacherkobos2} F.~Grundbacher, T.~Kobos, \emph{John-type decompositions for affinely-optimal positions of convex bodies}, arXiv:\textbf{2602.14847} (2026).

\bibitem{hanner} O.~Hanner, \emph{Intersections of translates of convex bodies}, Math.~Scand.~\textbf{4} (1956), 65--87.

\bibitem{johnson} W.~B.~Johnson, G.~Schechtman, \emph{On the distance of subspaces of $l_p^n$ to $l_p^k$}, Trans.~Amer. Math.~Soc.~\textbf{324}(1) (1991), 319--329.

\bibitem{johnson2} W.~B.~Johnson, A.~Szankowski, \emph{The trace formula in Banach spaces}, Israel J.~Math.~\textbf{203} (2014), 389--404.

\bibitem{khrabrov} A.~I.~Khrabrov, \emph{Comparison of some distances between sums of $\ell_p^n$ spaces}, Funct.~Anal. Appl.~\textbf{38}(1) (2004), 75--77.

\bibitem{khrabrov2} A.~I.~Khrabrov, \emph{Estimates of distances between sums of the spaces $\ell_p^n$. II}, J.~Math.~Sci. \textbf{129}(4) (2005), 4040--4048.

\bibitem{kim} J.~Kim, \emph{Minimal volume product near Hanner polytopes}, J.~Funct.~Anal.~\textbf{266}(4) (2014), 2360--2402.

\bibitem{kobosswanepoel} T.~Kobos, K.~Swanepoel, \emph{Equilateral dimension of the planar Banach--Mazur compactum}, Proc.~Amer.~Math.~Soc.~\textbf{153}(10) (2025), 4423--4436.

\bibitem{kobosvarivoda} T.~Kobos, M.~Varivoda, \emph{On the Banach--Mazur distance in small dimensions}, Discrete Comput.~Geom.~\textbf{74}(2) (2025), 399--427.

\bibitem{plebanek} M.~Korpalski, G.~Plebanek, \emph{How many miles from $L_{\infty}$ to $\ell_{\infty}$?}, arXiv:\textbf{2511.12672v2} (2025).

\bibitem{piasecki} Ł.~Piasecki, \emph{On Banach space properties that are not invariant under the Banach--Mazur distance $1$}, J.~Math.~Anal.~Appl.~\textbf{467}(2) (2018), 1129--1147.

\bibitem{rudelson} M.~Rudelson, \emph{Estimates of the weak distance between finite-dimensional Banach spaces}, Israel J.~Math.~\textbf{89} (1995), 189--204.

\bibitem{stromquist} W.~Stromquist, \emph{The maximum distance between two-dimensional Banach spaces}, Math.~Scand.~\textbf{48} (1981), 205--225

\bibitem{tomczaksymm} N.~Tomczak-Jaegermann, \emph{The Banach--Mazur distance between symmetric spaces},  Israel J.~Math.~\textbf{46}(1-2) (1983), 40--66.

\end{thebibliography}
\end{document}